\newtheorem{theorem}{Theorem}[section]
\newtheorem{lemma}[theorem]{Lemma}
\newtheorem{proposition}[theorem]{Proposition}
\newtheorem{corollary}[theorem]{Corollary}
\theoremstyle{definition}
\newtheorem{definition}[theorem]{Definition}
\newtheorem{example}[theorem]{Example}
\theoremstyle{remark}
\newtheorem{remark}[theorem]{Remark}
\numberwithin{equation}{section}
\renewcommand{\emptyset}{\varnothing}
\def\@tocline#1#2#3#4#5#6#7{\relax
	\ifnum #1>\c@tocdepth 
	\else
	\par \addpenalty\@secpenalty\addvspace{#2}%
	\begingroup \hyphenpenalty\@M
	\@ifempty{#4}{%
		\@tempdima\csname r@tocindent\number#1\endcsname\relax
	}{%
		\@tempdima#4\relax
	}%
	\parindent\z@ \leftskip#3\relax \advance\leftskip\@tempdima\relax
	\rightskip\@pnumwidth plus4em \parfillskip-\@pnumwidth
	#5\leavevmode\hskip-\@tempdima
	\ifcase #1
	\or\or \hskip 1em \or \hskip 2em \else \hskip 3em \fi%
	#6 \hskip 0.5em \nobreak\relax
	\dotfill\hbox to\@pnumwidth{\@tocpagenum{#7}}\par
	\nobreak
	\endgroup
	\fi}
\newcommand{\reg}{\mathrm{reg}}
\newcommand{\Id}{\mathrm{id}}
\newcommand{\ol}[1]{\overline{#1}}
\renewcommand{\[}{\begin{equation}}
\renewcommand{\]}{\end{equation}}
\title{\vspace*{-15mm}Relation morphisms of directed graphs}
\author[G.~G.~de Castro]{Gilles G.~de Castro}
\address[G.~G.~de Castro]{Departmento de Matem\'atica, Universidade Federal de Santa Catarina, Florian\'opolis, SC, 88040-900, Brazil}
\email{gilles.castro@ufsc.br}
\author[F.~D'Andrea]{Francesco D'Andrea}
\address[F.~D'Andrea]{Dipartimento di Matematica e Applicazioni ``R.~Caccioppoli'',
	Universit\`a di Napoli Federico II, and INFN Sezione di Napoli, 80126 Napoli, Italy}
\email{francesco.dandrea@unina.it}
\author[P. M.~Hajac]{Piotr M.~Hajac}
\address[P. M.~Hajac]{Instytut Matematyczny, Polska Akademia Nauk, ul. \'Sniadeckich 8, Warszawa, 00-656 Poland}
\email{pmh@impan.pl}
\begin{document}
	
	\begin{abstract}
		Associating graph algebras to directed graphs leads to both covariant and contravariant functors from suitable categories
		of graphs to the category $k$-$\mathsf{Alg}$ of algebras and algebra homomorphisms. 
		As both functors are often used at the same time, finding a new category of graphs that allows
		a ``common denominator'' functor unifying the covariant and
		contravariant constructions is a fundamental problem. Herein, we solve this problem by
		first introducing the relation category of graphs $\mathsf{RG}$, and then determining the concept of admissible graph relations that
		yields a subcategory of $\mathsf{RG}$ admitting a contravariant functor to $k$-$\mathsf{Alg}$
		simultaneously generalizing the aforementioned covariant and contravariant functors. Although we focus on Leavitt path algebras and graph C*-algebras, 
		on the way we unravel functors to $k$-$\mathsf{Alg}$
		given by path algebras, Cohn path algebras and Toeplitz graph C*-algebras from suitable subcategories of~$\mathsf{RG}$. Better still,
		we illustrate relation morphisms of graphs by naturally occurring
		examples, including Cuntz algebras, quantum spheres and quantum balls.
	\end{abstract}
	
	\maketitle
	
	\begingroup
	\quad
	\begin{minipage}{0.87\textwidth}
		\setlength{\parskip}{2pt}
		{\small \tableofcontents}
	\end{minipage}
	\endgroup
	
	\section{Introduction}
	\noindent
	Directed graphs are fundamental objects in combinatorics.
	Through path algebras, they are pivotal in representation theory and
	and the classification of finite-dimensional associative algebras over algebraically closed fields~\cite{ass-06}.
	On the other hand, the introduction of the Cuntz--Krieger relations into path algebras  brought about
	Leavitt path algebras \cite{aasm17} and graph C*-algebras \cite{R05}, which enjoy very tangible and computable K-theory,
	and appear  naturally in noncommutative topology.
	
	The construction of path algebras, Leavitt path algebras, and graph C*-algebras can be considered as a functor from a category of 
	directed graphs to the category of algebras and algebra homomorphisms
	in two different ways: covariant and contravariant. The former was explored in
	\cite{cht21,HT24cov} (cf.~\cite{aasm17,amp07,kr-g09,j-s02}) using a more general concept of morphisms of graphs. The latter uses the standard concept
	of a graph homomorphism, and was spectacularly successful in the work of Stallings~\cite{s-jr83}. Better still, the 
	contravariant induction for admissible subgraphs (also called  quotient graphs)  is ubiquitous, including natural examples in
	noncommutative topology explored by Hong and Szyma\'nski~\cite{HS02,HS08}. The importance and naturality of the contravariant
	functoriality for non-injective graph homomorphisms was explored  in~\cite{k-t06,HT24}.

	The main goal of this paper is to put together the covariant and contravariant functoriality, which we achieve in Theorem~\ref{main}.
	To this end, we introduce the concept of a relation morphism and refine it to an admissible relation morphism.
	The definition of a relation morphism was motivated by Zakrzewski morphisms for groupoids~\cite{z-s90}. As groupoids combine topological
	spaces with topological groups, and natural functors assigning C*-algebras to spaces and groups are contravariant and covariant respectively,
	one needs to resort to relations between groupoids rather then maps to obtain a category of groupoids that admits a functor to the category of C*-algebras
	and *-homomorphisms assigning convolution C*-algebras to groupoids.
	(Since every graph C*-algebra is a groupoid C*-algebra~\cite{p-alt02}, 
	unraveling the precise relationship between relation morphisms and Zakrzewski morphisms is an 
	important issue that is currently investigated.)
	
	While our prime motivation comes from
	mixed-pullback theorems in~\cite{cht21,HT24cov}, 
	where naturally occurring  diagrams require both the covariant and the contravariant functor,
	the need for such a unification is already transparent in studying maps between quantum spheres (Example~\ref{qsph})
	and unital embeddings of Cuntz algebras (Example~\ref{ex:cuntz}). These examples lead us to the following open question
	(Example~\ref{fdec}):
	Is every
	admissible relation morphism decomposable into finitely many admissible morphisms used in the covariant or contravariant
	induction?
	
	After recalling basic notions and establishing notation in Section~\ref{sec:2}, we introduce the relation morphisms of directed graphs and analyze 
	their decompositions in the above sense in Section~\ref{sec:3}. 
	In the subsequent three sections, we unravel the conditions needed to turn the assignment of, respectively, path algebras, 
	Cohn path algebras,
	and Leavitt path algebras to directed graphs into contravariant functors. At each stage, we analyze the aforementioned decomposition
	issue, and check how the new contravariant functor relates to the earlier known and used covariant and contravariant functors.
	Except for the case of path algebras, the new functors generalize the earlier functors subsuming them into the new framework.
	We end the paper by analyzing  in depth a number of examples and counterexamples in Section~\ref{sec:7}.

	\section{Preliminaries}\label{sec:2}
	\subsection{Graphs and paths}

	Throughout the paper,  $k$ is a fixed field, all algebras and algebra homomorphisms are over~$k$, and
	$\mathbb{C}$ denotes the field of complex numbers.
	In this section, we recall some definitions and results from \cite{HT24cov} and \cite{HT24}.
	
	A \emph{directed graph} $E$ (also called a \emph{quiver}) is a quadruple $(E^0,E^1,s_E,t_E)$, where $E^0$ and $E^1$ are disjoint sets and 
	$s_E,\,t_E\colon E^1\to E^0$ are maps.
	We call $E^0$ the set of \emph{vertices}, $E^1$ the set of \emph{edges} (or \emph{arrows}), $s_E$ the source map, and $t_E$ the target (or range) map.
	To simplify terminology, directed graphs will be simply be referred to as graph.
	
	A vertex $v\in E^0$ is called a \emph{sink} if $s_E^{-1}(v)=\emptyset$, it is called an \emph{infinite emitter} if $|s_E^{-1}(v)|=\infty$, and it is called 
	\emph{regular} if it is 
	neither a sink nor an infinite emitter. The set of regular vertices of a graph $E$ is denoted by $\reg(E)$. An edge $e$ with $s_E(e)=t_E(e)$ is called a \emph{loop}.
	We call a vertex \emph{$0$-regular} if it emits exactly one edge and this edge is a loop. The set of $0$-regular vertices of a graph $E$ is denoted by $\reg_0(E)$.
	
	A \emph{path} $p$ in $E$ of length $n\geq 1$ is a sequence $e_1e_2\cdots e_n$ of edges such that
	\[
	t_E(e_i)=s_E(e_{i+1}) \;\;\forall\;1\leq i<n .
	\]
	We think of a vertex $v\in E^0$ as a path of length~$0$. 
	We denote by $ \mathrm{FP}(E)$ the set of all finite paths in $E$.  
	The length of a path $p$ is denoted by~$|p|$.
	We define the source and target maps by $s_{PE}(e_1\cdots e_n):=s_E(e_1)$ and $t_{PE}(e_1\cdots e_n):=t_E(e_n)$,  and  by
	$s_{PE}(v)=t_{PE}(v)=v$. A positive-length path $x$ in $E$ is called a \emph{cycle} if $s_{PE}(x)=t_{PE}(x)$ (a loop is a cycle of length $1$).

	If $x,x'\in  \mathrm{FP}(E)$ satisfy $t_{PE}(x)=s_{PE}(x')$, we denote by $xx'$ their concatenation, and say that $x$ is an \emph{initial subpath} of $xx'$ 
	(Note that if $x$ or $x'$ is a  vertex, we write, respectively, $xx'=x'$ or $xx'=x$.)
	We define a partial order $\preceq$ by letting $x\preceq x'$ if $x$ is an initial subpath of~$x'$.
	We write $x\prec x'$ to mean that $x\preceq x'$ and $x\neq x'$. Finally, we write $x\sim x'$, and say that $x$ and $x'$ are \emph{comparable}, 
	if either $x\preceq x'$ or $x'\preceq x$.

	\subsection{Categories of graphs}
	
	Given two graphs $E$ and $F$, a \emph{path homomorphism} \mbox{$\vartheta:E\to F$}  is a map
	$\vartheta: \mathrm{FP}(E)\to  \mathrm{FP}(F)$ (which we denote by the same symbol) satisfying
	\begin{enumerate}[leftmargin=3em]
		\item\label{en:path1} $\vartheta(E^0)\subseteq F^0$,
		\item\label{en:path2} $s_{PF}\circ\vartheta=\vartheta\circ s_{PE}$, \ $t_{PF}\circ\vartheta=\vartheta\circ t_{PE}$,
		\item\label{en:path3} $\vartheta(xx')=\vartheta(x)\vartheta(x')\;\forall\;x,x'\in  \mathrm{FP}(E)$ such that $t_{PE}(x)=s_{PE}(x')$.
	\end{enumerate}
	It is immediate that the composition of path homomorphisms is again a path homomorphism, and that the identity map is a path homomorphism. 
	We thus arrive at:
	\begin{definition}
		We denote by $\mathsf{PG}$ the category whose objects are all graphs and whose morphisms are path homomorphisms. We denote by $\mathsf{OG}$ the 
		subcategory of $\mathsf{PG}$ whose objects are all graphs and whose morphisms are \emph{length-preserving} path homomorphisms. We call length-preserving
		path homomorphisms \emph{graph homomorphisms}.
	\end{definition}
	
	Note that the  condition \ref{en:path3} determines a path homomorphism by its values on edges and vertices: 
	$\vartheta(e_1\cdots e_n)=\vartheta(e_1)\cdots\vartheta(e_n)$.
	Better still, if $\varphi:E\to F$ is length preserving, then not only it maps vertices to vertices but also edges to edges, so we might equivalently look at $\varphi$ as 
	a pair of maps
	$\varphi^0:E^0\to F^0$ and $\varphi^1:E^1\to F^1$ obtained by restrictions. Vice versa, any such a pair of maps  
	satisfying the obvious adaptation of condition \ref{en:path2} uniquely determines a length-preserving path homomorphism, i.e.\ a graph homomorphism.
	
	\begin{definition}[\cite{HT24}]\label{conadm}
		A morphism $\varphi:E\to F$ in $\mathsf{OG}$ is called
		\begin{enumerate}
			\item 
			\emph{proper} if $|\varphi^{-1}(y)|<\infty,\;\forall\;y\in F^0\cup F^1$;
			\item 
			\emph{target injective}, respectively \emph{target surjective}, if the restriction 
			$$
			t_{PE}:\varphi^{-1}(f)\longrightarrow \varphi^{-1}(t_{PF}(f))
			$$
			of the target map is
			injective, respectively surjective, for all $f\in F^1$;
			\item 
			\emph{target bijective} if it is target injective and target surjective;
			\item 
			\emph{regular} if $\varphi^{-1}(\reg (F))\subseteq \reg (E)$.
		\end{enumerate}
	\end{definition}
	\begin{definition}[\cite{HT24cov}]\label{covadm}
		A morphism $\vartheta:E\to F$ in $\mathsf{PG}$ is called
		\begin{enumerate}
			\item \emph{injective on vertices} if its restriction $\vartheta:E^0\to F^0$ is injective;
			\item \emph{monotone} if, for all $e,e'\in E$, $\vartheta(e)\preceq\vartheta(e')$ implies $e=e'$;
			\item \emph{regular} whenever the following conditions hold:\vspace{2pt}
			\begin{enumerate}
				\item
				For any $v\in\mathrm{reg}(E)\setminus {\rm reg}_0(E)$, we require that:
				\begin{enumerate}[label=(\alph*)]
					\item
					$\vartheta$ restricted to $s_E^{-1}(v)$ be injective;
					\item
					$y\in \vartheta(s_E^{-1}(v))$ if and only if
					\begin{enumerate}[label=(\roman*)]
						\item
						$y=f_1\cdots f_n$ with $n\geq 1$ and $f_1,\ldots,f_n\in F^1$ ($y$ is not a vertex),
						\item
						$yy'\in \vartheta(s_E^{-1}(v))\,\Rightarrow\, y'=t_{PF}(y)$,
						\item
						$\forall\, i\in\{1,\ldots,n\}, f\in s_F^{-1}(s_F(f_i))\,\exists\,y'\in  \mathrm{FP}(F):f_1\ldots f_{i-1}fy'\in\vartheta(s_E^{-1}(v))$.
					\end{enumerate}
				\end{enumerate}
				\item 
				For any $v\in {\rm reg}_0(E)$, either the above condition holds or $\vartheta(s^{-1}_E(v))=\vartheta(v)$.
			\end{enumerate}
		\end{enumerate}
	\end{definition}
	
	Note that, even though $\mathsf{OG}$ is a subcategory of $\mathsf{PG}$, the regularity conditions in these two categories have different meanings. 
	Here is the list of categories used in \cite{HT24,HT24cov}. The class of objects is always the same, i.e.\ all graphs.
	\begin{enumerate}[label={$\bullet$}]
		\item 
		$\mathsf{POG}$ is the subcategory of $\mathsf{OG}$ consisting of proper graph homomorphisms;
		\item 
		$\mathsf{TBPOG}$ is the subcategory of $\mathsf{POG}$ consisting of 
		target-bijective proper graph homomorphisms;
		\item 
		$\mathsf{CRTBPOG}$ is the subcategory of $\mathsf{TBPOG}$ consisting of 
		regular target-bijective proper graph homomorphisms;
		\item 
		$\mathsf{IPG}$ is the subcategory of $\mathsf{PG}$ consisting of path homomorphisms that are injective on vertices;
		\item 
		$\mathsf{MIPG}$ is the subcategory of $\mathsf{IPG}$ consisting of path homomorphisms that are monotone and injective on vertices;
		\item 
		$\mathsf{RMIPG}$ is the subcategory of $\mathsf{MIPG}$ consisting of path homomorphisms that are regular monotone, and injective on vertices.
	\end{enumerate}
	\noindent
	The relationship between these subcategories is depicted by the diagram with only one vertical inclusion:
	\begin{center}
		\begin{tikzpicture}[yscale=2,xscale=-2.7]
			
			\node (a1) at (0,0) {$\mathsf{PG}$};
			\node (a2) at (1,0) {$\mathsf{IPG}$.};
			\node (a3) at (2,0) {$\mathsf{MIPG}$};
			\node (a4) at (3,0) {$\mathsf{RMIPG}$};
			\node (b1) at (0,1) {$\mathsf{OG}$};
			\node (b2) at (1,1) {$\mathsf{POG}$};
			\node (b3) at (2,1) {$\mathsf{TBPOG}$};
			\node (b4) at (3,1) {$\mathsf{CRTBPOG}$};
			
			\path[right hook-To]
			(a2) edge (a1)
			(a3) edge (a2)
			(a4) edge (a3)
			(b2) edge (b1)
			(b3) edge (b2)
			(b4) edge (b3)
			(b1) edge (a1);
			
		\end{tikzpicture}
		\vspace*{-3mm}\end{center}

\subsection{Algebras associated to graphs}

We now pass to  definitions of the algebras determined by graphs. We denote by \mbox{$k$-$\mathsf{Alg}$}
the category of associative algebras over the field $k$ and algebra homomorphisms, and
we denote by C*-$\mathsf{Alg}$ the category of C*-algebras and *-homomorphisms.

Given a non-empty graph $E$, let $kE$ be the vector space of finitely supported maps $ \mathrm{FP}(E)\to k$ with pointwise addition and multiplication. 
For $x\in  \mathrm{FP}(E)$, denote by $S_x$ the characteristic function of 
the singleton $\{x\}$, i.e.\ $S_x(x')=1$ if $x=x'$ and $S_x(x')=0$ otherwise. The set $\{S_x\mid x\in  \mathrm{FP}(E)\}$ is a vector space basis of $kE$.
If $E$ is the empty graph, we adopt the convention that $k E=\{0\}$.

\begin{definition}[\protect{\cite[Definition~II.1.2]{ass-06}}]
The vector space $kE$ with multiplication defined on basis elements by
\[
S_xS_{x'}:=\begin{cases}
S_{xx'} & \text{if }t_{PE}(x)=s_{PE}(x') \\ 0 & \text{otherwise}
\end{cases}
\]
is an associative algebra called the \emph{path algebra} of $E$.
\end{definition}

Note that, if $x=v$ is a vertex, an alternative notation $P_v$ is used for the generator $S_v$ is  to stress that it is an idempotent element of the algebra. 
Observe also that the path algebra can be defined 
as the universal associative $k$-algebra generated by $\{S_x\mid x\in E^0\cup E^1\}$ subject to the relations:
\begin{itemize}[leftmargin=3em]
	\item[(V)] $S_vS_w=\delta_{v,w}S_v$ for all $v,w\in E^0$,
	\item[(E1)] $S_{s_E(e)}S_e=S_e=S_eS_{t_E(e)}$ for all $e\in E^1$.
\end{itemize}

Next, we can refine the basic concept of a path algebra as follows:
\begin{definition}[\protect{\cite[Definition~1.5.1]{aasm17}}]\label{def:rela}
Let $E$ be a graph. The \emph{Cohn path algebra} $C_k(E)$ is the universal associative $k$-algebra generated by 
$\{S_x\mid x\in E^0\cup E^1\}\sqcup\{S_{e^*}\mid e\in E^1\}$ subject to the relations (V), (E1) together with
\begin{itemize}[leftmargin=3em]
	\item[(E2)] $S_{t_E(e)}S_{e^*}=S_{e^*}=S_{e^*}S_{s_E(e)}$ for all $e\in E^1$,
	\item[(CK1)]\label{ck1} $S_{e^*}S_{e'}=\delta_{e,e'}S_{t_E(e)}$ for all $e,e' \in E^1$.
\end{itemize}
Better still, we can now pass to a main object of interest, which is:
\end{definition}
\begin{definition}[\protect{\cite[Definition~1.2.3]{aasm17}}]
The \emph{Leavitt path algebra} $L_k(E)$ is the universal associative $k$-algebra generated by $\{S_x\mid x\in E^0\cup E^1\}\sqcup\{S_{e^*}\mid e\in E^1\}$ 
subject to the relations (V), (E1), (E2), (CK1) together with
\begin{itemize}[leftmargin=3em]
	\item[(CK2)]\label{ck2} $S_v=\sum_{e\in s^{-1}_E(v)}S_eS_{e^*}$ for all $v\in \reg(E)$.
\end{itemize}
\end{definition}
\begin{remark}
We can also define $C_k(E)$ and $L_k(E)$ using the notion of an extended graph.
The \emph{extended graph} $\bar{E}:=(\bar{E}^0, \bar{E}^1,s_{\bar{E}},t_{\bar{E}})$
of the graph $E$ is given as follows:
\begin{gather}
	\bar{E}^0:=E^0,\quad \bar{E}^1:=E^1\sqcup (E^1)^*,\quad (E^1)^*:=\{e^*~|~e\in E^1\},
	\nonumber\\
	\forall\; e\in E^1:\quad s_{\bar{E}}(e):=s_E(e),\quad t_{\bar{E}}(e):=t_E(e),
	\nonumber\\
	\forall\; e^*\in (E^1)^*:\quad s_{\bar{E}}(e^*):=t_E(e),\quad t_{\bar{E}}(e^*):=s_E(e).
\end{gather}
Then the Cohn path algebra $C_k(E)$ is the quotient of the path algebra $k\bar{E}$ of the extended graph $\bar{E}$ by the ideal generated by the set:
\[\label{en:Cohn}
	\left\{S_{e^*}S_{e'}-\delta_{e,e'}S_{t_E(e)} \;\big|\; e,e' \in E^1\right\}.
\]
Much in the same way, the Leavitt path algebra $L_k(E)$  is the path algebra $k\bar{E}$ of the extended graph $\bar{E}$ divided by the ideal generated by 
the union of the set \eqref{en:Cohn} with the set:
\[\label{en:Leavitt}
	\left\{\sum_{e\in s^{-1}_E(v)}S_eS_{e^*}-S_v \;\Big|\; v\in \reg(E)\right\}.
	\vspace*{-12pt}
\]
\end{remark}

To simplify the notation, we use the same symbol $S_e$ for an element in $k E$, in $k \bar{E}$, and its quotient class in $C_k(E)$ and in $L_k(E)$. 
If $\bar{x}=\bar{e}_1\cdots\bar{e}_n\in\mathrm{FP}(\bar{E})$, where \mbox{$\bar{e}_1,\,\ldots,\,\bar{e}_n\in\bar{E}^1$}, then
$S_{\bar{x}}:=S_{\bar{e}_1}\cdots S_{\bar{e}_n}$ and
$\bar{x}^*:=\bar{e}_n^*\cdots\bar{e}_1^*$, where $(e_i^*)^*:=e_i\in E^1$.
It will be always clear from the context which algebra $S_e$ is an element of.
Furthermore,
when the ground field is $\mathbb{C}$, the Cohn and the Leavitt path algebras of a graph $E$ are both *-algebras, with involution defined on generators by
 $S_v^*:=S_v$, $(S_e)^*:=S_{e^*}$ and $(S_{e^*})^*:=S_e$, which is consistent with the above general-field notation.
Since both the Cohn path *-algebra and the Leavitt path *-algebra are generated by partial isometries,
 whose norms cannot exceed one, the universal enveloping C*-algebra exist leading to the following definition:
\begin{definition}[\protect{\cite[Theorem~4.1]{fr99}, \cite[Definition~5.2.1]{aasm17}}]\label{df:gCs}
Let $E$ be a graph.
The universal enveloping C*-algebra  of the Cohn path *-algebra $C_{\mathbb{C}}(E)$ is called the \emph{Toeplitz graph C*-algebra} of $E$ and is 
denoted by $\mathcal{T}(E)$. The universal enveloping C*-algebra  of  the Leavitt path *-algebra $L_{\mathbb{C}}(E)$ is called the \emph{graph C*-algebra} of $E$ 
and is denoted by~$C^*(E)$.
\end{definition}

\begin{remark}\label{rem:gCs}
The equivalence of Definition~\ref{df:gCs} with the definition of $C^*(E)$ as the universal C*-algebra generated by a Cuntz--Krieger $E$-family is discussed, e.g.,
in \cite{t-m07}. Therein, it is also shown that
the canonical \mbox{*-homomorphism} $L_{\mathbb{C}}(E)\to C^*(E)$ is injective, so $L_{\mathbb{C}}(E)$ can be identified with a dense *-subalgebra of $C^*(E)$.
\end{remark}

We end the section by recalling the known functoriality of the above associations of algebras to graphs.
A morphism $\varphi:E\to F$ in $\mathsf{POG}$ induces an algebra homomorphism $\varphi^*:kF\to kE$
given by
\[
\varphi^*(S_y):=\sum_{x\in\varphi^{-1}(y)}S_x\,,\quad\forall\;y\in F^0\cup F^1 .
\]
A morphism $\vartheta:F\to E$ in $\mathsf{IPG}$ induces an algebra homomorphism $\vartheta_*:kF\to kE$
given by
\[
\vartheta_*(S_y):= S_{\vartheta(y)}\,,\quad\forall\;y\in F^0\cup F^1 .
\]
Note that, when we extend $\varphi^*$ or $\vartheta_*$ to $kF$ by multiplicativity, we find that the same formulas as above hold for any path $y\in  \mathrm{FP}(F)$.

The following associations of algebras to graphs and of algebra homomorphisms to graph homomorphisms  
define contravariant functors~\cite{HT24}:
\begin{enumerate}
\item 
$\mathsf{POG}\to k$-$\mathsf{Alg}$, $E\mapsto kE$, $\varphi\mapsto\varphi^*$;
\item 
$\mathsf{TBPOG}\to k$-$\mathsf{Alg}$, $E\mapsto C_k(E)$, $\varphi\mapsto\varphi^*$;
\item 
$\mathsf{CRTBPOG}\to k$-$\mathsf{Alg}$, $E\mapsto L_k(E)$, $\varphi\mapsto\varphi^*$.
\end{enumerate}
The following associations of algebras to graphs and of algebra homomorphisms to path homomorphisms of graphs
define covariant functors \cite{HT24cov}:
\begin{enumerate}
\item 
$\mathsf{IPG}\to k$-$\mathsf{Alg}$, $E\mapsto kE$, $\vartheta\mapsto\vartheta_*$;
\item 
$\mathsf{MIPG}\to k$-$\mathsf{Alg}$, $E\mapsto C_k(E)$, $\vartheta\mapsto\vartheta_*$;
\item 
$\mathsf{RMIPG}\to k$-$\mathsf{Alg}$, $E\mapsto L_k(E)$, $\vartheta\mapsto\vartheta_*$.
\end{enumerate}
Replacing the Cohn path algebra by the Toeplitz graph C*-algebra and replacing the Leavitt path algebra by the graph C*-algebra
yield both contravariant and covariant functors into the category C*-$\mathsf{Alg}$ of C*-algebras and *-homomorphisms.

\section{Relation morphisms}\label{sec:3}\noindent

\vspace*{-5mm}

\subsection{Sets}
Let $X$ and $Y$ be sets, and
let $R\subseteq X\times Y$. We view $R$ as a relation between $X$ and $Y$. 
Given sets $X,Y,Z$ and relations $R\subseteq X\times Y$ and $S\subseteq Y\times Z$, their composition $S\circ R\subseteq X\times Z$ is defined as
\[
S\circ R:=\big\{(x,z)\in X\times Z\mid \exists\;y\in Y:(x,y)\in R,(y,z)\in S\big\}.
\]
Also, the diagonal relation $\{(x,x)\;|\; x\in X\}\subseteq X\times X$ is the identity for the above composition. 
All this allows us to define the category $\mathsf{RESET}$ of sets and relations with the former being objects and the latter being morphisms.
The usual category $\mathsf{SET}$ of sets and maps can be viewed as a subcategory of $\mathsf{RESET}$ by writing a map $f:X\to Y$
as the relation
\[\label{relphi}
R_f: =\big\{(x,f(x))\mid x\in X\big\} .
\]
Much in the same way, the opposite category $\mathsf{SET}^{op}$ can be viewed as a subcategory of  $\mathsf{RESET}$ by writing
 a map  $g:Y\to X$ as the relation
\[\label{reltheta}
R^g: =\big\{(g(y),y)\mid y\in Y\big\}.
\]

Now,
in the category $\mathsf{SET}$, for any relation $R\subseteq X\times Y$, consider the pushout diagram
\begin{equation}\label{eq:pullpush}
\begin{tikzpicture}[scale=2.3,baseline=(current bounding box.center)]

\node (a1) at (135:1) {$X$};
\node (a2) at ($(45:1)+(135:1)$) {$R$};
\node (b1) at (0,0) {$X\coprod_RY.$};
\node (b2) at (45:1) {$Y$};

\path[-To,font=\footnotesize,inner sep=2pt,shorten <=2pt,shorten >=2pt]
(a1) edge[dashed] node[above=2pt] {$R_q\circ R^p$} node[below=2pt] {$R^j\circ R_i$} (b2)
(a2) edge node[above left] {$p$} (a1)
(b2) edge node[below right] {$j$} (b1)
(a1) edge node[below left] {$i$} (b1)
(a2) edge node[above right] {$q$} (b2);

\end{tikzpicture}
\end{equation}
given by  the canonical projections $p$ and~$q$.
 One can check that we always have the (pullback-type) factorization
$R=R_q\circ R^p$. However, the (pushout-type) factorization $R=R^j\circ R_i$ holds if and only if $R$ is transitively closed in the sense explained below.
(In general, we only have $R\subseteq R^j\circ R_i$.)
Now, the \emph{transitive closure} of the relation $R\subseteq X\times Y$ is the smallest equivalence relation in
\[
(X\sqcup Y)\times (X\sqcup Y)=(X\times X)\sqcup(X\times Y)\sqcup(Y\times X)\sqcup(Y\times Y) 
\]
containing $R$ intersected with~$X\times Y$. More explicitly, one can obtain the transitive closure of $R$ by completing all triples in $R$ of the form
 $\{(x,y), (x',y), (x',y')\}$ to quadruples $\{(x,y), (x',y), (x',y'), (x,y')\}$. Another way to look at the transitive closure is to say that it is the
 pullback of 
\[\label{pushset}
X\xrightarrow{\;i\;}X\coprod_RY\xleftarrow{\;j\;}Y.
\]
 A relation is called \emph{transitively closed} if it is equal to its transitive closure.
 Note that all relations of the form $R_f$ or $R^g$ are always transitively closed. All this leads to the following criterion
 for the existence of a pushout-type factorization:
 \begin{proposition}\label{criterion}
A relation $R\subseteq X\times Y$ can be written as the composition $R=R^b\circ R_a$, where $a\in \mathrm{Map}(X,Z)$ and $b\in \mathrm{Map}(Y,Z)$
for some set~$Z$, if and only if it is transitively closed.
 \end{proposition}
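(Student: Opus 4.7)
The plan is to prove the two implications separately by directly unpacking the composition of relations, with essentially no extra work beyond what the excerpt has already set up.

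For the forward implication, suppose $R = R^b \circ R_a$ for some maps $a \in \mathrm{Map}(X,Z)$ and $b \in \mathrm{Map}(Y,Z)$. Unpacking the definitions of $R_a$ and $R^b$ and of composition of relations yields $R = \{(x,y) \in X \times Y : a(x) = b(y)\}$. Given any triple $(x,y), (x',y), (x',y') \in R$, the chain of equalities $a(x) = b(y) = a(x') = b(y')$ forces $a(x) = b(y')$, so $(x,y') \in R$. Hence $R$ contains every quadruple completing one of its triples in the sense of the paragraph preceding the statement, so $R$ is transitively closed.

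For the reverse implication, the natural candidate for $(Z, a, b)$ is forced by the pushout in~\eqref{eq:pullpush}: set $Z := X \coprod_R Y$, $a := i$ and $b := j$. Unpacking again gives $R^j \circ R_i = \{(x,y) \in X \times Y : i(x) = j(y)\}$, which is precisely the pullback of the cospan~\eqref{pushset}. As observed immediately before the proposition, this pullback coincides with the transitive closure of $R$, so the hypothesis that $R$ equals its transitive closure yields $R = R^j \circ R_i$, exhibiting the desired factorization with $a = i$ and $b = j$.

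The one point that warrants verification is the identification of $\{(x,y) \in X \times Y : i(x) = j(y)\}$ with the transitive closure of $R$. Realizing the pushout as $(X \sqcup Y)/\!\sim$, where $\sim$ is the equivalence relation on $X \sqcup Y$ generated by the pairs of $R$, one has $i(x) = j(y)$ iff $x$ and $y$ lie in the same $\sim$-class, i.e.\ iff a finite zigzag chain of $R$-related elements links them; this is precisely the iterated ``completing triples to quadruples'' procedure described in the excerpt. Since the whole argument is a careful bookkeeping exercise with the definitions, I do not expect any genuine obstacle beyond making that last identification explicit.
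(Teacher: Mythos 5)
Your proposal is correct and follows essentially the same route as the paper: the forward direction is the same chain of equalities $a(x)=b(y)=a(x')=b(y')$ showing closure under completing triples, and the reverse direction takes $Z=X\coprod_R Y$ with $a=i$, $b=j$ and invokes the identification of $R^j\circ R_i$ with the transitive closure, exactly as the paper does by citing its preceding discussion of the pushout. Your extra paragraph spelling out why $i(x)=j(y)$ amounts to a finite zigzag in $R$ is a slightly more explicit version of what the paper leaves implicit, but it is not a different argument.
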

\begin{proof}
If $R$ is transitively closed, then we can take the pushout \eqref{pushset} to write $R=R^j\circ R_i$. Vice versa, if $R=R^b\circ R_a$ and
$(x,y),\,(x,y'),\,(x',y')\in R$, then
\[
b(y)=a(x)=b(y')=a(x').
\]
Hence, $(x',y)\in R$, which proves that $R$ is transitively closed.
\end{proof}
Let us end these generalities by 
defining the relation image of $x\in X$ as
\[
R(x):=\big\{y\in Y\mid (x,y)\in R\big\} ,
\]
and the relation preimage of $y\in Y$ as
\[
R^{-1}(y):=\big\{x\in X\mid (x,y)\in R\big\} .
\]
We extend these notions to subsets in the obvious way.

\subsection{Graphs}

Let us now introduce an elementary but fundamental concept of this paper:
\begin{definition}
Let $E$ and $F$ be graphs. A \emph{relation morphism}
$R:E\to F$ is a relation
\begin{equation*}
R\subseteq  \mathrm{FP}(E) \times  \mathrm{FP}(F)
\end{equation*}
such that
\begin{enumerate}
\item if $(x,y)\in R$ and $y\in F^0$, then $x\in E^0$ (i.e.\ $R^{-1}(F^0)\subseteq E^0$);
\item if $(x,y)\in R$, then $(s_{PE}(x),s_{PF}(y))\in R$ (source preserving);
\item if $(x,y)\in R$, then $(t_{PE}(x),t_{PF}(y))\in R$ (target preserving).
\end{enumerate}
\end{definition}

One can easily verify that this notion of morphism leads to a category. We denote by $\Id_E$ the identity morphism on $E$.

\begin{definition}
We call $\mathsf{RG}$ the category whose objects are all graphs and 
whose morphisms are relation morphisms.
\end{definition}

It is straightforward to prove the following:
\begin{proposition}
Let $\varphi: \mathrm{FP}(E)\to  \mathrm{FP}(F)$ be a morphism in the category $\mathsf{OG}$ and $\vartheta: \mathrm{FP}(F)\to  \mathrm{FP}(E)$ be a morphism in the category~$\mathsf{PG}$.
Then the assignments 
$$
\varphi\longmapsto R_\varphi\,,\quad \vartheta\longmapsto R^\vartheta\,,
$$
defined, respectively, by \eqref{relphi} and \eqref{reltheta},  yield, respectively, the embeddings of categories 
$\mathsf{OG}$ and $\mathsf{PG}^{\mathrm{op}}$ in $\mathsf{RG}$ that are identity on objects. 
\end{proposition}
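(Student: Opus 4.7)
The plan is to verify, for each of the two assignments, the three standard checks for defining a faithful functor that is the identity on objects: (i) the image of a morphism is indeed a relation morphism in $\mathsf{RG}$, (ii) identities go to identities and composition is respected (with the variance reversed in the $\mathsf{PG}$ case), and (iii) the assignment on hom-sets is injective. Because both categories share their object class with $\mathsf{RG}$ (all graphs) and the two assignments leave objects unchanged, the ``identity on objects'' clause is automatic.

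First I would check that $R_\varphi$ satisfies the three axioms of a relation morphism whenever $\varphi$ is a morphism in $\mathsf{OG}$. Since a length-preserving path homomorphism sends $E^0$ into $F^0$ (and no longer path to a vertex), axiom (1) holds. Axioms (2) and (3) are precisely the identities $s_{PF}\circ\varphi=\varphi\circ s_{PE}$ and $t_{PF}\circ\varphi=\varphi\circ t_{PE}$ satisfied by every path homomorphism. The verification that $R^\vartheta\subseteq  \mathrm{FP}(E)\times  \mathrm{FP}(F)$ is a relation morphism from $E$ to $F$ whenever $\vartheta\colon F\to E$ is a morphism in $\mathsf{PG}$ is parallel: axiom (1) reduces to $\vartheta(F^0)\subseteq E^0$, while axioms (2) and (3) again follow directly from the source- and target-equivariance of $\vartheta$.

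For functoriality, identities are clearly preserved: $R_{\Id_E}=R^{\Id_E}=\{(x,x)\mid x\in  \mathrm{FP}(E)\}$ is the identity morphism on $E$ in $\mathsf{RG}$. For composition in $\mathsf{OG}$, given $\varphi\colon E\to F$ and $\varphi'\colon F\to G$, unwinding the definition of relation composition gives
\[
R_{\varphi'}\circ R_\varphi=\big\{(x,z)\mid \exists\,y:\,y=\varphi(x),\;z=\varphi'(y)\big\}=R_{\varphi'\circ\varphi}.
\]
The $\mathsf{PG}^{\mathrm{op}}$ case requires care with variance: two composable morphisms $\vartheta\colon E\to F$, $\vartheta'\colon F\to G$ in $\mathsf{PG}^{\mathrm{op}}$ correspond to $\vartheta\colon F\to E$, $\vartheta'\colon G\to F$ in $\mathsf{PG}$, and their composite in $\mathsf{PG}^{\mathrm{op}}$ is $\vartheta\circ\vartheta'\colon G\to E$ viewed backwards. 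Chasing $(x,y)\in R^\vartheta$ and $(y,z)\in R^{\vartheta'}$ forces $x=\vartheta(y)=\vartheta(\vartheta'(z))$, so $R^{\vartheta'}\circ R^\vartheta=R^{\vartheta\circ\vartheta'}$, matching the $\mathsf{PG}^{\mathrm{op}}$-composition as desired.

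Finally, injectivity of both maps on morphisms is transparent: $R_\varphi$ determines $\varphi$ as the unique map whose graph it is, and likewise $R^\vartheta$ determines $\vartheta$. I expect no substantive obstacle; the only point demanding attention is tracking the reversal of composition order in the $\mathsf{PG}^{\mathrm{op}}$ case, so that the relation-composition direction lines up with the opposite-category composition direction.
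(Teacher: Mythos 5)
Your proof is correct and is precisely the routine verification that the paper omits (it states the proposition with only the remark that it is ``straightforward to prove''). All the checks — the three relation-morphism axioms for $R_\varphi$ and $R^\vartheta$, preservation of identities and of composition with the variance reversal handled correctly via $R^{\vartheta'}\circ R^{\vartheta}=R^{\vartheta\circ\vartheta'}$, and faithfulness — are carried out accurately.
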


Inspired by the diagram \eqref{eq:pullpush}, we now discuss factorizations of relation morphisms.
\begin{definition}\label{eq:pull}
Given a relation morphism $R:E\to F$, we define the \emph{relation graph} $G_R$ as follows:
\begin{gather*}
G_R^0:=(E^0\times F^0)\cap R , \quad
G_R^1:=\big( \mathrm{FP}(E)\times F^1\big)\cap R, \nonumber\\
s_{G_R}:=\big(s_{PE},s_{F}\big) , \quad
t_{G_R}:=\big(t_{PE},t_{F}\big) .
\end{gather*}
\end{definition}
To obtain a desired factorization, we need two additional properties.
\begin{definition}
A relation morphism $R:E\to F$ is called \emph{multiplicative} whenever 
$$
(x,y),(x',y')\in R,\; s_{PE}(x')=t_{PE}(x),\; s_{PF}(y')=t_{PF}(y)\quad
\Rightarrow \quad (xx',yy')\in R.
$$
\end{definition}
\begin{definition}
A relation morphism $R:E\to F$ is called \emph{decomposable}
whenever 
$$
y,y'\in  \mathrm{FP}(F),\, s_{PF}(y')=t_{PF}(y),\, (\ol{x},yy')\in R\;\Rightarrow\;\exists\;x\in R^{-1}(y),\,x'\in R^{-1}(y')\colon xx'=\ol{x}.
$$
 \end{definition}
 
We are now ready to claim:
\begin{proposition}\label{prop:factor}
Let $R:E\to F$ be a relation morphism that is multiplicative and decomposable.
Then there exists the following (pullback-type) factorization
\begin{equation*}
\begin{tikzpicture}[scale=2.5,baseline=(current bounding box.center)]

\node (a1) at (135:1) {$E$};
\node (a2) at ($(45:1)+(135:1)$) {$G_R$};
\node (b2) at (45:1) {$F\,.$};

\path[-To,font=\footnotesize,inner sep=2pt,shorten <=2pt,shorten >=2pt]
(a1) edge node[above=2pt] {$R=R_\varphi\circ R^\vartheta$} (b2)
(a2) edge node[above left] {$\vartheta$} (a1)
(a2) edge node[above right] {$\varphi$} (b2);

\end{tikzpicture}
\end{equation*}
Here $G_R$ is the relation graph defined above, $\vartheta:G_R\to E$ is the morphism in $\mathsf{PG}$ induced by the projection $R\to  \mathrm{FP}(E)$ on the first 
component, and $\varphi:G_R\to F$ is the morphism in $\mathsf{OG}$ induced by the projection $R\to  \mathrm{FP}(F)$ on the second component.
\end{proposition}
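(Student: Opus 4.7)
The plan is to verify that $G_R$ is a well-defined graph, exhibit $\vartheta$ and $\varphi$ explicitly on vertices and edges, extend $\vartheta$ to paths, and then prove the factorization $R=R_\varphi\circ R^\vartheta$ by double inclusion, with the two inclusions using, respectively, multiplicativity and decomposability of $R$.

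First I would check that $G_R$ is a graph. The sets $G_R^0$ and $G_R^1$ are disjoint because their second components live in the disjoint sets $F^0$ and $F^1$. The crucial point is that $s_{G_R}$ and $t_{G_R}$ land in $G_R^0$: given $(x,e)\in G_R^1$, the source-preserving property of $R$ gives $(s_{PE}(x),s_F(e))\in R$, and since $s_F(e)\in F^0$, the defining condition $R^{-1}(F^0)\subseteq E^0$ forces $s_{PE}(x)\in E^0$, so $(s_{PE}(x),s_F(e))\in G_R^0$; targets are analogous. Next I would define $\varphi$ as the second-component projection, which sends vertices to vertices and edges to edges and therefore (by construction of $s_{G_R},t_{G_R}$) is a length-preserving graph homomorphism, i.e.\ a morphism in $\mathsf{OG}$. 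For $\vartheta$, I would put $\vartheta((x,v)):=x$ on vertices and $\vartheta((x,e)):=x$ on edges, noting that here $x\in\mathrm{FP}(E)$ can have arbitrary length, which is exactly why $\vartheta$ only belongs to $\mathsf{PG}$ and not to $\mathsf{OG}$. Verifying source/target preservation on edges is immediate from the definition of $s_{G_R},t_{G_R}$, and $\vartheta$ is then extended to paths by concatenation; composability of the images in $E$ is automatic from composability in $G_R$.

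For the factorization, recall that $(x,y)\in R_\varphi\circ R^\vartheta$ means there is $g\in\mathrm{FP}(G_R)$ with $\vartheta(g)=x$ and $\varphi(g)=y$. For the inclusion $R_\varphi\circ R^\vartheta\subseteq R$, I write $g=g_1\cdots g_n$ with $g_i=(x_i,e_i)\in G_R^1\subseteq R$ (or $g\in G_R^0\subseteq R$ in the length-$0$ case) and apply multiplicativity inductively: composability in $G_R$ gives $t_{PE}(x_i)=s_{PE}(x_{i+1})$ and $t_F(e_i)=s_F(e_{i+1})$, so iterated multiplicativity yields $(x_1\cdots x_n,e_1\cdots e_n)=(x,y)\in R$. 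For the opposite inclusion $R\subseteq R_\varphi\circ R^\vartheta$, I take $(x,y)\in R$. If $y\in F^0$, then $x\in E^0$ and $g:=(x,y)\in G_R^0$ already witnesses membership. If $y=e_1\cdots e_n$ with $n\geq 1$, I would peel off one edge at a time: applying decomposability to the splitting $y=e_1\cdot(e_2\cdots e_n)$ produces $x_1,x'$ with $(x_1,e_1),(x',e_2\cdots e_n)\in R$ and $x_1x'=x$, and an induction on $n$ delivers $x=x_1\cdots x_n$ with $(x_i,e_i)\in R$ for all $i$. Each $(x_i,e_i)$ is then an edge of $G_R$, and the concatenations of sources/targets match (in $E$ because $x=x_1\cdots x_n$, in $F$ because $y=e_1\cdots e_n$), so $g:=(x_1,e_1)\cdots(x_n,e_n)\in\mathrm{FP}(G_R)$ and $\vartheta(g)=x$, $\varphi(g)=y$.

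The main obstacle I anticipate is the inductive use of decomposability in this last step, since decomposability as stated handles only one binary split $y=y_1y_2$ at a time; some care is required to ensure that the resulting splittings of $x$ are compatible and that the edges $(x_i,e_i)$ really assemble into a path in $G_R$ (which in turn hinges on how $s_{G_R},t_{G_R}$ were defined componentwise). Once that induction is set up cleanly, the rest of the argument is a bookkeeping check that $\vartheta$ satisfies the three axioms of a path homomorphism and that $\varphi$ is a genuine graph homomorphism, both of which reduce to the source- and target-preserving clauses in the definition of a relation morphism.
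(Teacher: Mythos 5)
Your proposal is correct and follows essentially the same route as the paper: a double inclusion in which $R_\varphi\circ R^\vartheta\subseteq R$ comes from iterated multiplicativity applied to the edges $(x_i,f_i)\in G_R^1$ of a path in $G_R$, and $R\subseteq R_\varphi\circ R^\vartheta$ comes from decomposing $(x,y)$ along the edge factorization of $y$ via decomposability. The extra care you take in checking that $G_R$ is a well-defined graph and in spelling out the edge-by-edge induction for decomposability are details the paper leaves implicit, but they do not change the argument.
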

\begin{proof}{}
[$R\subseteq R_\varphi\circ R^\vartheta$]
Let $(x,y)\in R$. If $y\in F^0\cup F^1$, then $(x,y)\in G_R^0\cup G_R^1$, $\vartheta(x,y)=x$ and $\varphi(x,y)=y$, so $(x,y)\in R_\varphi\circ R^\vartheta$. 
Assume now that $n:=|y|\geq 2$, and write $y=f_1\cdots f_n$, where all $f_i$ are edges. 
By the decomposability of $R$, there are $x_1,\ldots,x_n\in  \mathrm{FP}(E)$ such that $x=x_1\cdots x_n$ and 
$(x_i,f_i)\in R$ for all $i=1,\ldots,n$. In this case, $z:=(x_1,f_1)\cdots (x_n,f_n)\in  \mathrm{FP}(G_R)$ and $z$ is such that $\vartheta(z)=x$ and $\varphi(z)=y$. 
Hence $(x,y)\in R_\varphi\circ R^\vartheta$.

[$R\supseteq R_\varphi\circ R^\vartheta$]
Assume now that $(x,y)\in R_\varphi\circ R^\vartheta$, and let $z\in  \mathrm{FP}(G_R)$ be such that $\vartheta(z)=x$ and $\varphi(z)=y$. Since $\varphi$ preserves lengths, 
if $z\in G_R^0\cup G_R^1$, then $z=(x,y)\in R$ by construction. In the case $n:=|z|\geq 2$, we can write $z=(x_1,f_1)\cdots (x_n,f_n)$, where $(x_i,f_i)\in G_R^1$ 
for all $i=1,\ldots,n$. Furthermore, $x=x_1\cdots x_n$ and $y=f_1\cdots f_n$. Now it follows from the multiplicativity of $R$ that $(x,y)\in R$.
\end{proof}

The construction in the above proposition satisfies the following, pullback-like, universal property.

\begin{proposition}\label{prop:universal1}
Let $R:E\to F$ be a relation morphism  that is multiplicative and decomposable.
For any $\varphi'\in\mathsf{OG}(G',F)$ and $\vartheta'\in\mathsf{PG}(G',E)$ such that $R_{\varphi'}\circ R^{\vartheta'}=R$ there exists a unique 
$\pi\in\mathsf{OG}(G',G_R)$ rendering the  diagram 
\begin{center}
\begin{tikzpicture}[xscale=3,yscale=1.5,baseline=(current bounding box.center)]

\node (a) at (0,0) {$E$};
\node (b) at (1,1) {$G_R$};
\node (c) at (1,2) {$G'$};
\node (d) at (2,0) {$F\,.$};

\path[-To,font=\footnotesize,inner sep=2pt,shorten <=2pt,shorten >=2pt]
(a) edge[dashed] node[above=2pt] {$R$} (d)
(b) edge node[below right,pos=0.4] {$\vartheta$} (a)
(b) edge node[below left,pos=0.4] {$\varphi$} (d)
(c) edge node[above left] {$\vartheta'$} (a)
(c) edge node[above right] {$\varphi'$} (d)
(c) edge node[right] {$\pi$} (b);

\end{tikzpicture}
\end{center}
commutative.
Furthermore, the unique map $\pi$ is surjective.
\end{proposition}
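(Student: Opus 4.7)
The plan is to define $\pi \colon G' \to G_R$ on vertices and edges of $G'$ by the natural formula
\[
\pi(z) := (\vartheta'(z),\,\varphi'(z)),
\]
and then to verify, in order: well-definedness of the codomain, compatibility with source and target, commutativity of the diagram, uniqueness, and surjectivity.

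For well-definedness, a vertex $v \in (G')^0$ is sent to a pair in $E^0 \times F^0$ because $\vartheta'$ is a path homomorphism (hence sends vertices to vertices, by condition \ref{en:path1}) and $\varphi'$ is length-preserving; the pair lies in $R$ by the hypothesis $R_{\varphi'} \circ R^{\vartheta'} = R$, and thus in $G_R^0$. An edge $e \in (G')^1$ is sent to a pair in $\mathrm{FP}(E) \times F^1$ (again invoking length-preservation of $\varphi'$) which lies in $R$ and hence in $G_R^1$. Source and target preservation of $\pi$ on edges reduces immediately to the same properties for $\vartheta'$ and $\varphi'$, so $\pi$ extends uniquely by multiplicativity to a length-preserving morphism in $\mathsf{OG}$. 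The identities $\vartheta \circ \pi = \vartheta'$ and $\varphi \circ \pi = \varphi'$ then hold by construction on vertices and edges, since $\vartheta$ and $\varphi$ are the two projections on $G_R$, and they extend to all of $\mathrm{FP}(G')$ because $\vartheta$, $\varphi$, and $\pi$ are all path homomorphisms.

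For uniqueness, any competing $\pi' \in \mathsf{OG}(G', G_R)$ with the same commutativity property satisfies $\vartheta(\pi'(z)) = \vartheta'(z)$ and $\varphi(\pi'(z)) = \varphi'(z)$ at every vertex and edge $z$ of $G'$; since an element of $G_R^0 \cup G_R^1$ is a pair determined by its two components, this forces $\pi'(z) = \pi(z)$, and hence $\pi' = \pi$. For surjectivity onto $G_R^0$ and $G_R^1$ separately, any $(x, y) \in G_R^0 \cup G_R^1$ lies in $R$, so by the hypothesis there exists $z \in \mathrm{FP}(G')$ with $\vartheta'(z) = x$ and $\varphi'(z) = y$; length-preservation of $\varphi'$ matches $|z|$ with $|y|$, so vertices lift to vertices and edges to edges, and $\pi(z) = (x, y)$. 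I do not foresee any real obstacle here, as the hypotheses of multiplicativity and decomposability play no direct role in the verification: they were already consumed in establishing the factorization $R = R_\varphi \circ R^\vartheta$ in Proposition \ref{prop:factor}, and the present statement is simply the universal property of that pullback-type construction.
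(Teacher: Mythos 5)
Your proof is correct and follows essentially the same route as the paper: define $\pi(z):=(\vartheta'(z),\varphi'(z))$, note that the hypothesis $R_{\varphi'}\circ R^{\vartheta'}=R$ identifies $R$ with $\{(\vartheta'(z),\varphi'(z))\mid z\in\mathrm{FP}(G')\}$, and read off membership in $G_R$, commutativity, uniqueness, and surjectivity. The paper dismisses these verifications as obvious, whereas you spell them out (including the correct observation that multiplicativity and decomposability are only needed to have the factorization of Proposition~\ref{prop:factor} in the first place); the content is the same.
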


\begin{proof}
The equality $R_{\varphi'}\circ R^{\vartheta'}=R$ means that
\[
R=\big\{\big(\vartheta'(z),\varphi'(z)\big)\mid z\in G'\big\} .
\]
For $z\in  \mathrm{FP}(G')$, we define $\pi(z):=\big(\vartheta'(z),\varphi'(z)\big)$. It is clear from the definition of $G_R$ that the thus defined $\pi$ is a morphism in 
$\mathsf{OG}(G',G_R)$. The
surjectivity and uniqueness of~$\pi$, and the commutativity of the diagram are obvious.
\end{proof}

The map $\pi$ in previous proposition is not always injective, as shown in the next example.

\begin{example}
Consider the commutative diagram:
\begin{center}
\begin{tikzpicture}[xscale=2.7,yscale=2.3]

\node[draw,rectangle,color=gray!60] (a1) at (0,0) {\includegraphics[page=30,valign=c]{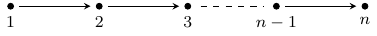}};
\node[draw,rectangle,color=gray!60] (a2) at (2,0) {\includegraphics[page=31,valign=c]{images.pdf}};
\node[draw,rectangle,color=gray!60] (b1) at (1,1) {\includegraphics[page=33,valign=c]{images.pdf}};
\node[draw,rectangle,color=gray!60] (b2) at (1,-1) {\includegraphics[page=32,valign=c]{images.pdf}};

\path[-To,font=\footnotesize,inner sep=1pt,shorten <=2pt,shorten >=2pt]
(b2) edge node[below left] {$\vartheta$} (a1)
(b1) edge node[above right] {$\varphi'$} (a2)
(b1) edge node[right] {$\pi$} (b2)
(b1) edge node[above left] {$\vartheta'$} (a1)
(b2) edge node[below right] {$\varphi$} (a2);

\draw
(a1.north) node[above left] {$E$}
(b2.west) node[left] {$G_R$}
(a2.north) node[above] {$F$}
(b1.west) node[left] {$G'$};

\end{tikzpicture}.
\end{center}
Here $ \vartheta'$ and $\varphi'$ are  identity on vertices, $\vartheta'(g)=\vartheta'(g')=e$, $\varphi'(g)=\varphi'(g')=f$.
The maps $\vartheta$, $\varphi$ and $\pi$ are  given by the general construction. In particular, $\pi(g)=\pi(g')=(e,f)$.
\end{example}

In contradistinction with the above construction, we cannot mimic the pushout construction for graphs. The prime obstruction
is already given by Proposition~\ref{criterion}. A more subtle  stumbling block is as follows:
\begin{proposition}\label{prop:nogo}
Let $R:E\to F$ be a relation morphism. Suppose that there exists
$f\in F^1$ such that $R^{-1}(f)$ contains two paths $x,x'$ of different length.
Then, there is no (pushout-type) factorization
\begin{center}
\begin{tikzpicture}[scale=2.3,baseline=(current bounding box.center),inner sep=2pt]

\node (a1) at (135:1) {$E$};
\node (b1) at (0,0) {$H$};
\node (b2) at (45:1) {$F$};

\path[-To,font=\footnotesize,inner sep=2pt,shorten <=2pt,shorten >=2pt]
(a1) edge node[above=2pt] {$R^{\vartheta'}\circ R_{\varphi'}$} (b2)
(b2) edge node[below right] {$\vartheta'$} (b1)
(a1) edge node[below left] {$\varphi'$} (b1);

\end{tikzpicture}
\end{center}
for any graph $H$ and any morphisms $\varphi':E\to H$ in $\mathsf{OG}$ and $\vartheta':F\to H$ in $\mathsf{PG}$.
\end{proposition}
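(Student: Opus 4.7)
The plan is to argue by contradiction: suppose a factorization $R = R^{\vartheta'} \circ R_{\varphi'}$ through some graph $H$ exists, with $\varphi' \in \mathsf{OG}(E,H)$ and $\vartheta' \in \mathsf{PG}(F,H)$, and derive a contradiction from the existence of two paths $x, x' \in R^{-1}(f)$ of different length.

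First I would unpack the composition of relations using the definitions \eqref{relphi} and \eqref{reltheta}: for any $a \in \mathrm{FP}(E)$ and $b \in \mathrm{FP}(F)$,
\[
(a,b) \in R^{\vartheta'} \circ R_{\varphi'} \;\iff\; \exists\, c \in \mathrm{FP}(H):\; \varphi'(a) = c \text{ and } \vartheta'(b) = c \;\iff\; \varphi'(a) = \vartheta'(b).
\]
So, under the assumed factorization, membership in $R$ reduces to the single equation $\varphi'(a) = \vartheta'(b)$.

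Next I would apply this to the two given paths. Since $(x,f), (x',f) \in R$, we obtain $\varphi'(x) = \vartheta'(f) = \varphi'(x')$, so in particular $|\varphi'(x)| = |\varphi'(x')|$. But $\varphi'$ is a morphism in $\mathsf{OG}$, hence length-preserving, giving $|\varphi'(x)| = |x|$ and $|\varphi'(x')| = |x'|$. Therefore $|x| = |x'|$, contradicting the hypothesis that $x$ and $x'$ have different lengths.

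There is essentially no obstacle here: the entire content of the statement is that the $\mathsf{OG}$-side of the factorization is rigidly length-preserving, whereas the $\mathsf{PG}$-side can absorb different lengths on the $F$-side but then forces a unique value $\vartheta'(f)$, which in turn pins down a unique length on the $E$-side via $\varphi'$. The only thing worth being careful about is the direction of the relations $R_{\varphi'}$ and $R^{\vartheta'}$, so I would make a point of citing \eqref{relphi} and \eqref{reltheta} explicitly when computing the composition, to keep the directions of $\varphi'$ and $\vartheta'$ unambiguous.
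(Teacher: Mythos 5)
Your proof is correct and is essentially identical to the paper's: both derive $\varphi'(x)=\vartheta'(f)=\varphi'(x')$ from the assumed factorization and then contradict the hypothesis using the length-preservation of $\varphi'\in\mathsf{OG}$. Your explicit unpacking of the relation composition is a welcome extra precision, but the argument is the same.
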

\begin{proof}
Suppose such a factorization exists. Then $\varphi'(x)=\vartheta'(f)=\varphi'(x')$, but $\varphi'$ is length preserving, which gives a contradiction.
\end{proof}

An explicit counterexample to the existence of the above factorization is the following.
\begin{example}\label{ex:counter}
Consider the relation morphism
\begin{center}
\begin{tikzpicture}[scale=5]

\node[draw,rectangle,color=gray!60] (a) at (0,0) {\includegraphics[page=25,valign=c]{images.pdf}};
\node[draw,rectangle,color=gray!60] (b) at (1,0) {\includegraphics[page=24,valign=c]{images.pdf}};

\path[-To,font=\footnotesize,inner sep=1pt,shorten <=2pt,shorten >=2pt]
(a) edge node[above=2pt] {$R$} (b);
\end{tikzpicture},
\end{center}
where $R:=\big\{ (1,1),(2,2),(3,1),(5,2),(e,f),(e'e'',f) \big\}$.
Then $R^{-1}(f)$ contains paths of length $1$ and $2$, thus satisfying the assumptions of Proposition~\ref{prop:nogo} 
preventing the existence of a pushout-type factorization.
On the other hand, the pullback-type factorization of Proposition~\ref{prop:factor} in the present example becomes
\begin{center}
\begin{tikzpicture}[scale=5]

\node[draw,rectangle,color=gray!60] (a1) at (0,0) {\includegraphics[page=25,valign=c]{images.pdf}};
\node[draw,rectangle,color=gray!60] (a2) at (1,0) {\includegraphics[page=29,valign=c]{images.pdf}};
\node[draw,rectangle,color=gray!60] (b2) at (2,0) {\includegraphics[page=24,valign=c]{images.pdf}};

\path[-To,font=\footnotesize,shorten <=2pt,shorten >=2pt]
(a1) edge node[above] {$R^\vartheta$} (a2)
(a2) edge node[above] {$R_\varphi$} (b2);

\end{tikzpicture}.
\end{center}
\vspace*{-20pt}
\end{example}

We end this section with a technical lemma allowing us to define a relation morphism $R:E\to F$ specifying it on vertices and edges of $F$ alone.
\begin{lemma}\label{lem:generation}
Let $E,F$ be graphs, and $S^0\subseteq E^0\times F^0$, $S^1\subseteq  \mathrm{FP}(E)\times F^1$ be subsets such that
$(s_{PE}(x),s_{F}(f)),(t_{PE}(x),t_F(f))\in S^0$ for all $(x,f)\in S^1$.
Then
there exists a unique multiplicative and decomposable
relation morphism $R:E\to F$ such that $G_R^0=S^0$ and $G_R^1=S^1$.
Every multiplicative and decomposable
relation morphism arises in this way.
\end{lemma}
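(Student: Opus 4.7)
The plan is to construct $R$ explicitly from $(S^0,S^1)$ as the smallest multiplicative relation extending them. Concretely, I would declare $(x,y)\in R$ when either $y\in F^0$ and $(x,y)\in S^0$, or $y=f_1\cdots f_n$ is a positive-length path and there exist $x_1,\ldots,x_n\in\mathrm{FP}(E)$ such that $(x_i,f_i)\in S^1$ for every $i$ and $x=x_1\cdots x_n$, where the concatenation is required to be a genuine path in $\mathrm{FP}(E)$ (equivalently, $t_{PE}(x_i)=s_{PE}(x_{i+1})$ for $i<n$).

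The next step will be to check the three relation-morphism axioms for this $R$. Condition (1) is immediate from $S^0\subseteq E^0\times F^0$, since the second clause of the construction forces $y\notin F^0$. For source preservation, when $y$ is a vertex there is nothing to do, and when $y=f_1\cdots f_n$ the compatibility hypothesis applied to $(x_1,f_1)\in S^1$ yields $(s_{PE}(x),s_{PF}(y))=(s_{PE}(x_1),s_F(f_1))\in S^0\subseteq R$; target preservation is symmetric. Multiplicativity follows by concatenating decompositions: the conditions $t_{PE}(x)=s_{PE}(x')$ and $t_{PF}(y)=s_{PF}(y')$ guarantee that the concatenated sequence of $x_i$'s remains a valid path in $\mathrm{FP}(E)$. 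Decomposability follows by splitting the decomposition of $\bar x$ at the index separating $y$ from $y'$; the corner cases where $y$ or $y'$ is a vertex are dispatched by source or target preservation. The identities $G_R^0=S^0$ and $G_R^1=S^1$ are then immediate: the only pairs in $R$ whose second coordinate is a single edge $f$ arise from $n=1$ in the decomposition, hence $(x,f)\in S^1$.

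For uniqueness, let $R'$ be another multiplicative, decomposable relation morphism with $G_{R'}^0=S^0$ and $G_{R'}^1=S^1$. Induction on $|y|$, using the multiplicativity of $R'$ applied iteratively to the pairs $(x_i,f_i)\in S^1\subseteq R'$, shows $R\subseteq R'$; conversely, induction on $|y|$ using the decomposability of $R'$ writes any $(x,y)\in R'$ with $|y|\geq 2$ as a concatenation of elements of $G_{R'}^1=S^1$, yielding $R'\subseteq R$. The final claim of the lemma then follows by taking $S^0:=G_{R'}^0$ and $S^1:=G_{R'}^1$ for any given multiplicative and decomposable relation morphism $R'$: the compatibility hypothesis is a direct consequence of source and target preservation (combined with axiom (1)) applied to the edges of $G_{R'}^1$, and the uniqueness step identifies $R'$ with the canonical $R$ produced by the construction.

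The main obstacle, if any, is largely bookkeeping: carefully isolating the edge cases in which $y$, $y'$, or both reduce to vertices when verifying multiplicativity and decomposability, and formalising the two inductions on path length in the uniqueness argument. None of these steps requires a genuinely new idea beyond repeatedly invoking the multiplicativity, decomposability, and source/target-preservation axioms.
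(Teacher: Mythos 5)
Your proposal is correct and follows essentially the same route as the paper: you build $R$ as exactly the set of pairs $(x,y)$ with $(x,y)\in S^0$ or with $y=f_1\cdots f_n$ and $x=x_1\cdots x_n$ for $(x_i,f_i)\in S^1$, verify the axioms, and obtain uniqueness from multiplicativity and decomposability via induction on $|y|$. The paper's proof is just a terser version of the same construction, so no further comparison is needed.
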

\begin{proof}
Given $S^0$ and $S^1$, define
\begin{align}
& R:=\{(x,y)\in  \mathrm{FP}(E) \times  \mathrm{FP}(F)\mid (x,y)\in S^0 \notag \\[2pt]
& \hspace*{2cm} \text{ or } \exists\; (x_1,f_1),\ldots,(x_n,f_n)\in S^1:x=x_1\cdots x_n,y=f_1\cdots f_n\}.
\end{align}
By construction $G_R^0=S^0$, $G_R^1=S^1$ and $R$ is decomposable. It is also easy to check that $R$ is multiplicative, and $R$ preserves sources 
and targets. The uniqueness of such $R$ follows from decomposability and multiplicativity.
On the other hand, any multiplicative and decomposable relation $R$ can be recovered from $G_R^0$ and $G_R^1$ as above.
\end{proof}

Note that, if $R:E\to F$ is a multiplicative and decomposable relation morphism, then $R$ is generated by $G_R^0$ and $G_R^1$ 
by the componentwise concatenation, so $R$ can be viewed as the set of finite paths in~$G_R$.

\section{Path algebras}\label{sec:4}
\noindent
Much as it was done in \cite{HT24} and \cite{HT24cov}, now we want to determine a subcategory of $\mathsf{RG}$ admitting a functor to $k$-$\mathsf{Alg}$ by 
assigning path algebras to graphs
and algebra homomorphisms to relation morphisms. Unfortunately, it turns out that there is no such a subcategory of $\mathsf{RG}$ that contains both 
$\mathsf{IPG}^{\mathrm{op}}$ and $\mathsf{POG}$. Therefore, we need to settle for a subcategory that contains $\mathsf{IPG}^{\mathrm{op}}$ but does not 
contain $\mathsf{POG}$.

\subsection{The subcategory $\mathsf{PRG}$ and the path-algebra functor}

The next two steps towards determining admissible relation morphisms inducing *-homomorphisms between C*-algebras are:
\begin{definition}
Given a relation morphism $R:E\to F$, we say that it is \emph{proper} if $|R^{-1}(y)|<\infty$ for all $y\in  \mathrm{FP}(F)$,
and we say that it is \emph{vertex disjoint} if 
$$
(v,\,v'\in F^0\text{ and } v\neq v') \;\Rightarrow\; R^{-1}(v)\cap R^{-1}(v')=\emptyset.
$$
\end{definition}
Now we are ready to provide the homomorphism formula used throughout the paper. It is worth emphasizing that this formula 
generalizes both the formula  for the contravariant  induction and the  covariant induction given in \cite{HT24} and \cite{HT24cov}, respectively.
\begin{lemma}\label{lem:RP}
Let $R:E\to F$ be a relation morphism that is multiplicative, 
decomposable, proper and vertex disjoint.
Then the formula
\begin{equation}\label{eq:Rstar}
R_P^*(S_y):= \sum_{x\in R^{-1}(y)}S_x,
\end{equation}
where $y\in F^0\cup F^1$, defines an algebra homomorphism $R_P^*:kF\to kE$.
\end{lemma}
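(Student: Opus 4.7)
The plan is to use the universal property of the path algebra $kF$: to specify an algebra homomorphism out of $kF$, it suffices to assign elements of $kE$ to the generators $\{S_y : y \in F^0 \cup F^1\}$ and check that the defining relations (V) and (E1) are preserved. So I would first note that properness of $R$ guarantees each sum $\sum_{x \in R^{-1}(y)} S_x$ is finite and therefore a well-defined element of $kE$; this makes the formula \eqref{eq:Rstar} meaningful on generators. The bulk of the proof then reduces to two short calculations.

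For relation (V), I would use condition~(1) in the definition of a relation morphism, which gives $R^{-1}(v) \subseteq E^0$ for every $v \in F^0$. Thus $R_P^*(S_v)$ is a finite sum of vertex idempotents in $kE$, which are pairwise orthogonal by (V) in $kE$. Hence $R_P^*(S_v)^2 = R_P^*(S_v)$. For distinct $v, w \in F^0$, vertex disjointness gives $R^{-1}(v) \cap R^{-1}(w) = \emptyset$, so all cross terms in $R_P^*(S_v) R_P^*(S_w)$ vanish by orthogonality of distinct vertex idempotents, yielding $0 = \delta_{v,w} R_P^*(S_v)$.

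For relation (E1), fix $f \in F^1$ and consider the product
\[
R_P^*(S_{s_F(f)}) R_P^*(S_f) = \sum_{v \in R^{-1}(s_F(f))} \sum_{x \in R^{-1}(f)} S_v S_x.
\]
In $kE$, the summand $S_v S_x$ is nonzero only when $v = s_{PE}(x)$, in which case it equals $S_x$. Since $R$ is source-preserving, for every $x \in R^{-1}(f)$ the vertex $s_{PE}(x)$ lies in $R^{-1}(s_F(f))$ and thus appears in the outer sum. Hence for each such $x$ there is exactly one contributing $v$, and we recover $\sum_{x \in R^{-1}(f)} S_x = R_P^*(S_f)$. The identity $R_P^*(S_f) R_P^*(S_{t_F(f)}) = R_P^*(S_f)$ is proved identically using target preservation.

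The only step requiring care is the bookkeeping for (E1): one must observe that the single condition $v = s_{PE}(x)$ automatically guarantees $v \in R^{-1}(s_F(f))$ via source preservation, so no term is missed and no term is double-counted. I do not expect to need multiplicativity or decomposability at this stage, since only the defining relations of $kF$ (which involve only vertices and single edges) are being verified; those additional hypotheses will matter for functoriality or for the compatibility with longer paths.
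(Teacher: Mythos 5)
Your proof is correct and follows essentially the same route as the paper: invoke the universal property of $kF$, use properness for finiteness of the sums, vertex disjointness for relation (V), and source/target preservation for relation (E1). Your closing observation that multiplicativity and decomposability are not needed at this stage is also consistent with the paper's argument, which likewise does not use them here.
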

\begin{proof}
For $v\in F^0$, the element 
$R_P^*(S_v)$ is a sum of orthogonal idempotents, so it is an idempotent itself. For $v,v'\in F^0$ such that $v\neq v'$, since $R$ is vertex 
disjoint, we infer that $R_P^*(S_v)R_P^*(S_{v'})=0$. Also, since $R$ preserves both targets and sources, we obtain $R_P^*(S_{s_F(f)})R_P^*(S_f)=R_P^*(S_f)$ and 
$R_P^*(S_f)R_P^*(S_{t_F(f)})=R_P^*(S_f)$ for all $f\in F^1$. By the universal property of $kF$, \eqref{eq:Rstar} defines a homomorphism $R_P^*:kF\to kE$.
\end{proof}

As seen in the above proof, $R_P^*(S_y)$ is defined on paths $y$ of length at least 2 by using the universal property of path algebras.
In general, however, the formula \eqref{eq:Rstar} may not hold for paths of length at least~2. A counterexample is below. 
\begin{example}
Consider the following relation morphism:
\begin{center}
\begin{tikzpicture}[scale=6]

\node[draw,rectangle,color=gray!60] (a) at (0,0) {\includegraphics[page=21,valign=c]{images.pdf}};
\node[draw,rectangle,color=gray!60] (c) at (1,0) {\includegraphics[page=23,valign=c]{images.pdf}};

\path[-To,font=\footnotesize,inner sep=1pt,shorten <=2pt,shorten >=2pt]
(a) edge node[above=2pt] {$R$} (c);

\end{tikzpicture}
\end{center}
Here $R$ is the relation morphism generated by
\[
G_R^0:=\{(1,1),(2,2),(3,3)\} ,
\qquad 
G_R^1:=\big\{(e,f),(ee'',f),(e',f'),(e''e',f')\big\} .
\]
A simple computation gives
\begin{align}
R_P^*(S_{ff'})=R_P^*(S_f)R_P^*(S_{f'})
&=S_{ee'}+2S_{ee'' e'}+S_{e(e'')^2e'} \notag\\
&\neq \sum_{x\in R^{-1}(ff')}S_x=
S_{ee'}+S_{ee'' e'}+S_{e(e'')^2e'} .
\end{align}
\vspace*{-25pt}
\end{example}

The next example shows that there is no  subcategory of $\mathsf{RG}$ containing both $\mathsf{IPG}^{\mathrm{op}}$ and $\mathsf{POG}$ such that the  
association $R\mapsto R_P^*$ of Lemma~\ref{lem:RP} is functorial.

\pagebreak

\begin{example}\label{ex:45}
Consider the following relation morphisms:
	\begin{center}
		\begin{tikzpicture}[scale=4.5]
			
			\node[draw,rectangle,color=gray!60] (a) at (0,0) {\includegraphics[page=21,valign=c]{images.pdf}};
			\node[draw,rectangle,color=gray!60] (b) at (1,0) {\includegraphics[page=22,valign=c]{images.pdf}};
			\node[draw,rectangle,color=gray!60] (c) at (2,0) {\includegraphics[page=26,valign=c]{images.pdf}};
			\node[draw,rectangle,color=gray!60] (d) at (2.8,0) {\includegraphics[page=27,valign=c]{images.pdf}};
			
			\path[-To,font=\footnotesize,inner sep=1pt,shorten <=2pt,shorten >=2pt]
			(a) edge node[above=2pt] {$R^\vartheta$} (b)
			(b) edge node[above=2pt] {$R_\varphi$} (c)
			(c) edge node[above=2pt] {$R^{\vartheta'}$} (d);
		\end{tikzpicture}
	\end{center}
	Here $\vartheta$ and $\vartheta'$ are morphisms in 
	$\mathsf{IPG}$, $\varphi$ is morphism in $\mathsf{POG}$,
	and they are defined by the formulas:
	\begin{gather}
		\vartheta(f)=e , \quad \vartheta(\widetilde{f})=ee'' , \quad
		\vartheta(f')=e' , \quad \vartheta(\widetilde{f}')=e''e' , \notag\\
		\varphi(f)=\varphi(\widetilde{f})=g , \qquad
		\varphi(f')=\varphi(\widetilde{f}')=g' , \notag\\
		\vartheta'(h)=gg' .
	\end{gather}
	We now show that
	\[
	(R^{\vartheta'}\circ R_\varphi\circ R^\vartheta)_P^*\neq (R^\vartheta)_P^*\circ (R_\varphi)_P^*\circ (R^{\vartheta'})_P^*.
	\]
	Indeed,
	\[
	(R^{\vartheta'}\circ R_\varphi\circ R^\vartheta)_P^*(h)=S_{ee'}+S_{ee'' e'}+S_{e(e'')^2 e'}
	\]
	is different from
	\[
	(R^\vartheta)^*\circ (R_\varphi)_P^*\circ (R^{\vartheta'})_P^*(h)=S_{ee'}+2S_{ee'' e'}+S_{e(e'')^2 e'} 
	\]
	because $S_{ee'' e'}$ is a basis element, whence not zero.
\end{example}

We now are going to construct step by step a suitable subcategory of $\mathsf{RG}$.
\begin{lemma}\label{comp1}
Let $R:E\to F$ and $S:F\to G$ be relation morphisms. Then the following statements hold:
\begin{enumerate}
\item 
If $R$ and $S$ are decomposable, then so is $S\circ R$.
\item 
If $R$ and $S$ are proper, then so is $S\circ R$.
\item 
If $R$ and $S$ are vertex disjoint, then so is $S\circ R$.
\item 
If $R$ and $S$ are multiplicative and $R$ is vertex disjoint, then $S\circ R$ is multiplicative.
\end{enumerate}
\end{lemma}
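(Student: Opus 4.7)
The plan is to treat the four statements one by one, directly from the definitions, writing $T:=S\circ R$ throughout. Statements~(1)--(3) are routine chase arguments; the genuine content lies in~(4), where the role of vertex disjointness becomes essential.

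For (1), I would take $(\bar{x},zz')\in T$ with $t_{PG}(z)=s_{PG}(z')$ and pick $\bar{y}\in\mathrm{FP}(F)$ witnessing the composition, i.e.\ $(\bar{x},\bar{y})\in R$ and $(\bar{y},zz')\in S$. Applying decomposability of $S$ to $\bar{y}$ yields $\bar{y}=yy'$ with $(y,z),(y',z')\in S$; the concatenation $yy'$ automatically forces $t_{PF}(y)=s_{PF}(y')$. Then applying decomposability of $R$ to $(\bar{x},yy')\in R$ produces $\bar{x}=xx'$ with $(x,y),(x',y')\in R$, and composing gives $(x,z),(x',z')\in T$, as required.

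For (2), I would use the identity $T^{-1}(z)=\bigcup_{y\in S^{-1}(z)}R^{-1}(y)$ together with the fact that a finite union of finite sets is finite. For (3), I would argue by contradiction: if $x\in T^{-1}(v)\cap T^{-1}(v')$ with $v\neq v'$ in $G^0$, pick witnessing $y,y'\in\mathrm{FP}(F)$. Axiom~(1) of a relation morphism forces $y,y'\in F^0$; vertex disjointness of $S$ and $v\neq v'$ give $y\neq y'$; then vertex disjointness of $R$ contradicts $x\in R^{-1}(y)\cap R^{-1}(y')$.

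The main step~(4) will be the heart of the lemma. Given $(x,z),(x',z')\in T$ with $t_{PE}(x)=s_{PE}(x')$ and $t_{PG}(z)=s_{PG}(z')$, I pick witnesses $y,y'\in\mathrm{FP}(F)$ with $(x,y),(y,z),(x',y'),(y',z')$ in the appropriate relations. Source/target preservation of $R$ yields $(t_{PE}(x),t_{PF}(y))\in R$ and $(s_{PE}(x'),s_{PF}(y'))\in R$, so the common element $t_{PE}(x)=s_{PE}(x')\in E^0$ lies in both $R^{-1}(t_{PF}(y))$ and $R^{-1}(s_{PF}(y'))$. Since these are vertices of $F$, vertex disjointness of $R$ forces $t_{PF}(y)=s_{PF}(y')$. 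Combined with $t_{PG}(z)=s_{PG}(z')$, multiplicativity of $S$ gives $(yy',zz')\in S$, and then multiplicativity of $R$, applied to $(x,y),(x',y')$, gives $(xx',yy')\in R$. Composing these yields $(xx',zz')\in T$.

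The hard part will be recognising in~(4) exactly why vertex disjointness of $R$ (and not merely of $S$) is the required hypothesis: without it, the two witnesses $y,y'$ could fail to be composable in $F$, so multiplicativity of $S$ could not be invoked to deliver $(yy',zz')\in S$. This asymmetry is precisely what the statement of~(4) encodes.
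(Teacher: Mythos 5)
Your proposal is correct and follows essentially the same route as the paper: the same two-step decomposition argument for (1), and for (4) the same key observation that source/target preservation plus vertex disjointness of $R$ forces $t_{PF}(y)=s_{PF}(y')$ so that multiplicativity of $S$ can be applied. The paper dismisses (2) and (3) as immediate; your explicit arguments for them (the union formula $T^{-1}(z)=\bigcup_{y\in S^{-1}(z)}R^{-1}(y)$ and the contradiction via axiom (1) of a relation morphism) are exactly the intended details.
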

\begin{proof}
(1) Let $z,z'\in  \mathrm{FP}(G)$ and $\ol{x}\in  \mathrm{FP}(E)$ be such that $s_{PG}(z')=t_{PG}(z)$ and $(\ol{x},zz')\in S\circ R$. By the definition of composition, 
there exists $\ol{y}\in  \mathrm{FP}(F)$ such that $(\ol{x},\ol{y})\in R$ and $(\ol{y},zz')\in S$. Using the decomposability of~$S$, 
we find $y,y'\in  \mathrm{FP}(F)$ such 
that $yy'=\ol{y}$ and $(y,z),(y',z')\in S$. In particular, $s_{PF}(y')=t_{PF}(y)$. Now, using the decomposability of~$R$, we find $x,x'\in  \mathrm{FP}(E)$ such that 
$xx'=\ol{x}$ and $(x,y),(x',y')\in R$. Hence, $(x,z),(x',z')\in S\circ R$, so $S\circ R$ is decomposable.

(2) and (3) are immediate.

(4) Let $(x,z),(x',z')\in S\circ R$ be such that $s_{PE}(x')=t_{PE}(x)$ and $s_{PG}(z')=t_{PG}(z)$. Hence, there are 
$y,y'\in  \mathrm{FP}(F)$ such that $(x,y),(x',y')\in R$ and $(y,z),(y',z')\in S$. Since $R$ preserves sources and targets, we conclude  that
\[
(t_{PE}(x),t_{PF}(y)),(s_{PE}(x'),s_{PF}(y'))\in R. 
\]
Next, from the facts that $s_{PE}(x')=t_{PE}(x)$ and that $R$ is vertex disjoint, we infer  that $s_{PF}(y')=t_{PF}(y)$. Then,
 the multiplicativity of $R$ and $S$ implies that $(xx',yy')\in R$ and $(yy',zz')\in S$. Therefore, $(xx',zz')\in S\circ R$, so $S\circ R$ is multiplicative.
\end{proof}

As we know from Example~\ref{ex:45}, the above conditions are not enough to define the desired path-algebra functor. It turns out that we need one more 
pivotal condition on relation morphisms:
\begin{definition}
A relation morphism $R:E\to F$ is called \emph{target injective}
if the restriction $t_{PE}:R^{-1}(f)\to E^0$
is injective for all $f\in F^1$.
\end{definition}

We are now ready for the second composability lemma:
\begin{lemma}\label{lem:target injective}
Let $R:E\to F$ and $S:F\to G$ be vertex-disjoint and target-injective relation morphisms. Then, if $R$ is also decomposable, $S\circ R$ is target injective.
\end{lemma}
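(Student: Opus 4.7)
The plan is to unwind target injectivity of $S\circ R$ into its content. Fix $g\in G^1$ and suppose $(x,g),(x',g)\in S\circ R$ with $t_{PE}(x)=t_{PE}(x')$; the goal is to show $x=x'$. By the definition of composition, there exist $y,y'\in \mathrm{FP}(F)$ with $(x,y),(x',y')\in R$ and $(y,g),(y',g)\in S$.

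The first step is to reduce to the case $y=y'$. Since $R$ is target preserving, both $(t_{PE}(x),t_{PF}(y))$ and $(t_{PE}(x'),t_{PF}(y'))$ lie in $R$, so the common vertex $t_{PE}(x)=t_{PE}(x')$ sits in $R^{-1}(t_{PF}(y))\cap R^{-1}(t_{PF}(y'))$. Vertex-disjointness of $R$ then forces $t_{PF}(y)=t_{PF}(y')$, and target injectivity of $S$ applied to the edge $g$ yields $y=y'$.

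Next I would invoke decomposability of $R$ to reduce from paths to edges. Write $y=f_1\cdots f_n$ with each $f_i\in F^1$; the case $n=0$ is trivial, for the axiom $R^{-1}(F^0)\subseteq E^0$ then forces $x,x'\in E^0$, whence $x=t_{PE}(x)=t_{PE}(x')=x'$. For $n\geq 1$, iterated application of decomposability produces factorizations $x=x_1\cdots x_n$ and $x'=x_1'\cdots x_n'$ with $(x_i,f_i),(x_i',f_i)\in R$ for each $i$.

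The conclusion then follows by a backwards induction on $i$ from $n$ down to $1$. At $i=n$, the hypothesis $t_{PE}(x)=t_{PE}(x')$ reads $t_{PE}(x_n)=t_{PE}(x_n')$, and target injectivity of $R$ at the edge $f_n$ gives $x_n=x_n'$; for $i<n$, the identity $x_{i+1}=x_{i+1}'$ yields $t_{PE}(x_i)=s_{PE}(x_{i+1})=s_{PE}(x_{i+1}')=t_{PE}(x_i')$, and target injectivity of $R$ at $f_i$ again gives $x_i=x_i'$. Concatenation delivers $x=x'$. I do not anticipate a real obstacle beyond noticing that vertex-disjointness of $R$ is precisely the mechanism that transfers the equality of targets in $E^0$ to an equality of targets in $F^0$, so that the target-injectivity hypothesis on $S$ can fire.
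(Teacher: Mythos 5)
Your proof is correct and follows essentially the same route as the paper's: reduce to $y=y'$ via target preservation plus vertex disjointness of $R$ and target injectivity of $S$, then use decomposability and a backwards induction on the factors $x_i$, matching targets to sources, with target injectivity of $R$ at each edge $f_i$. The only cosmetic difference is that the paper splits off the case $y\in F^1$ separately, whereas you absorb it as the base case $n=1$ of the induction.
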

\begin{proof}
Let $(x,g),(x',g)\in (S\circ R)$ be such that $g\in G^1$ and $t_{PE}(x)=t_{PE}(x')$. By the 
definition of composition, there exist $y,y'\in  \mathrm{FP}(F)$ such that 
$(x,y),(x',y')\in R$ and $(y,g),(y',g)\in S$. Since $R$ preserves targets and is vertex disjoint, we see that $t_{PF}(y)=t_{PF}(y')$. Now, 
using the target-injectivity of~$S$, we obtain  $y=y'$. Next, we need to
consider three cases depending on the length of~$y$.

If $y\in F^0$, then $x,x'\in E^0$ because 
$R^{-1}(F^0)\subseteq E^0$. In this case 
\begin{equation}
x=t_{PE}(x)=t_{PE}(x')=x'.
\end{equation}
If $y\in F^1$, then $x=x'$ since $R$ is target injective.
Finally, assume that $y=f_1\cdots f_n$ for some $n\geq 2$ and $f_1,\ldots,f_n\in F^1$. By the decomposability of $R$, there exist 
\[
x_1,\ldots,x_n,x'_1,\ldots,x'_n\in  \mathrm{FP}(E)
\]
 such that $x=x_1\ldots x_n$,  $x'=x_1'\ldots x_n'$ and $(x_i,f_i),(x'_i,f_i)\in R$ for all $i=1,\ldots,n$. Note that 
\[
t_{PE}(x_n)=t_{PE}(x)=t_{PE}(x')=t_{PE}(x'_n).
\]
Furthermore, since $R$ is target injective, we know  that $x_n=x_n'$. Now, for $i=1,\ldots,n-1$, we continue as follows.
From $x_{i+1}=x'_{i+1}$, we conclude that 
\[
t_{PE}(x_i)=s_{PE}(x_{i+1})=s_{PE}(x'_{i+1})=t_{PE}(x'_i)\,.
\]
Hence, as $R$ is target injective, we infer that $x_i=x'_i$, so
$x=x'$.
\end{proof}

Combining together the composition lemmas Lemma~\ref{comp1} and Lemma~\ref{lem:target injective}, we arrive at:
\begin{theorem}
	Relation morphisms that are multiplicative, decomposable, proper, vertex disjoint, and target injective define a subcategory of $\mathsf{RG}$ whose objects 
	are all directed graphs. (We denote this subcategory by $\mathsf{PRG}$.)
\end{theorem}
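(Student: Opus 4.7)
My plan is that this theorem is essentially an assembly of the two preceding composition lemmas (Lemma~\ref{comp1} and Lemma~\ref{lem:target injective}), together with a routine verification that the identity morphism $\Id_E$ on each graph $E$ satisfies all five properties. So there are really only two things to check: (a) that the five properties are preserved under composition, and (b) that $\Id_E \in \mathsf{PRG}(E,E)$.

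For part (a), given $R\colon E\to F$ and $S\colon F\to G$ with all five properties, I would apply Lemma~\ref{comp1} directly to obtain that $S\circ R$ is decomposable, proper, and vertex disjoint. For multiplicativity, Lemma~\ref{comp1}(4) requires both morphisms to be multiplicative and $R$ to be vertex disjoint; both hypotheses are in hand, so $S\circ R$ is multiplicative. For target injectivity, Lemma~\ref{lem:target injective} requires $R$ and $S$ to be vertex disjoint and target injective, and $R$ to be decomposable; again all hypotheses hold, so $S\circ R$ is target injective. No new argument is needed here; the lemmas were set up precisely to make this closure under composition immediate.

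For part (b), note that $\Id_E$ is the diagonal relation $\{(x,x) \mid x\in\mathrm{FP}(E)\}$. Then $\Id_E^{-1}(y) = \{y\}$ for every $y\in\mathrm{FP}(E)$, so properness is trivial (cardinality one), and for distinct vertices $v\neq v'$ the preimages $\{v\},\{v'\}$ are disjoint, giving vertex disjointness. Target injectivity follows because a one-element set maps injectively under any function. Multiplicativity and decomposability are clear from the definition: concatenation of $(x,x)$ and $(x',x')$ is $(xx',xx')\in\Id_E$, and if $(\bar{x},yy')\in\Id_E$ then $\bar{x}=yy'$ so we may take $x=y$ and $x'=y'$.

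There is no real obstacle here---the hard work was done in proving the composition lemmas, in particular handling the subtle interplay between multiplicativity and vertex disjointness in Lemma~\ref{comp1}(4), and the case analysis on path length in Lemma~\ref{lem:target injective}. Once those are established, the theorem is essentially a bookkeeping statement. The only thing worth being careful about is making sure the hypotheses of each lemma are genuinely satisfied by the pair $(R,S)$ when all five closure properties are assumed, which is why it is important that both $R$ and $S$ are taken to be vertex disjoint (feeding Lemma~\ref{comp1}(4)) and that $R$ is decomposable (feeding Lemma~\ref{lem:target injective}).
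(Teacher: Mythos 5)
Your proposal is correct and matches the paper's argument exactly: the theorem is stated there as an immediate consequence of combining Lemma~\ref{comp1} and Lemma~\ref{lem:target injective}, with the (routine) verification that each $\Id_E$ satisfies all five conditions left implicit. Your explicit check of the hypotheses fed into each lemma, and of the identity morphism, fills in precisely what the paper omits.
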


To construct a path-algebra functor from $\mathsf{PRG}$ to $k$-$\mathsf{Alg}$, first we need:
\begin{lemma}\label{lem:decomposition of composition}
Let $R:E\to F$ and $S:F\to G$ be relation morphisms such that
both are vertex disjoint and target injective, and $R$ is decomposable. Then
$(x,g)\in G_{(S\circ R)}^1$ implies that there is a unique $y\in S^{-1}(g)$ such that $(t_{PE}(x),t_{PF}(y))\in R$.
Furthermore, if $y=f_1\cdots f_n$ for some $n\geq 1$ and $f_1\ldots,f_n\in F^1$, then there are unique $x_1,\ldots,x_n\in  \mathrm{FP}(E)$ such that 
$x=x_1\ldots x_n$ and $(x_i,f_i)\in R$ for all $i=1,\ldots,n$.
\end{lemma}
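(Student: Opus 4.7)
The plan is to prove the two uniqueness statements separately, with existence in each case coming essentially for free from the definitions (composition of relations for the first, and decomposability of $R$ for the second). The backbone of both arguments will be the same two-step routine: use vertex-disjointness of the relation to collapse target vertices, then use target-injectivity to upgrade an equality of targets to an equality of paths.

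For the first claim, I would start from $(x,g) \in S \circ R$ with $g \in G^1$, which by the definition of composition gives some $y \in \mathrm{FP}(F)$ with $(x,y) \in R$ and $(y,g) \in S$; target-preservation of $R$ yields $(t_{PE}(x), t_{PF}(y)) \in R$, handling existence. For uniqueness, suppose $y, y' \in S^{-1}(g)$ both satisfy $(t_{PE}(x), t_{PF}(y)), (t_{PE}(x), t_{PF}(y')) \in R$. Since both pairs are vertex-vertex and $R$ is vertex disjoint, we get $t_{PF}(y) = t_{PF}(y')$; then, since $g \in G^1$ and $S$ is target injective on $S^{-1}(g)$, we conclude $y = y'$.

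For the second claim, existence of the decomposition $x = x_1 \cdots x_n$ with $(x_i, f_i) \in R$ is immediate by iterating the decomposability of $R$ on $y = f_1 \cdots f_n$. The uniqueness is where a small amount of care is needed, and this is the step I expect to be the main (mild) obstacle: I would proceed by reverse induction on the index $i$, starting with $i = n$. Suppose $x_1 \cdots x_n = x'_1 \cdots x'_n$ with $(x_i, f_i), (x'_i, f_i) \in R$ for all $i$. Since concatenation forces $t_{PE}(x_n) = t_{PE}(x) = t_{PE}(x'_n)$, the target-injectivity of $R$ at the edge $f_n$ gives $x_n = x'_n$. For the inductive step, once $x_{i+1} = x'_{i+1}$ is known, matching sources in the concatenation yields $t_{PE}(x_i) = s_{PE}(x_{i+1}) = s_{PE}(x'_{i+1}) = t_{PE}(x'_i)$, and target-injectivity of $R$ at $f_i$ delivers $x_i = x'_i$. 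Iterating down to $i = 1$ finishes the proof.

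Notably, the hypothesis that $S$ is decomposable is not used in the lemma, and the vertex-disjointness of $S$ is used only to ensure that the composition $S\circ R$ behaves as expected in the preceding framework; the two uniqueness statements themselves rest on the three hypotheses that $R$ is decomposable, $R$ is target injective, and (for the first claim alone) that $R$ is vertex disjoint and $S$ is target injective.
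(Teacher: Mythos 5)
Your proof is correct and follows essentially the same route as the paper: existence of $y$ from the definition of composition plus target preservation, uniqueness of $y$ from vertex disjointness of $R$ combined with target injectivity of $S$, and uniqueness of the $x_i$ by the same downward induction on targets (via target injectivity of $R$) that the paper borrows from its proof of Lemma~\ref{lem:target injective}. Your closing observation that decomposability of $S$ and vertex disjointness of $S$ are not actually needed for the two uniqueness claims is accurate as well.
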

\begin{proof}
The existence of $y$ comes from the definition of composition and the fact that $R$ preserves targets. 
The uniqueness is due to $R$ being vertex disjoint and $S$ being target injective.
The proof of second part of the statement follows the same idea as in the proof of Lemma~\ref{lem:target injective}, with $x=x_1\cdots x_n=x_1'\cdots x'_n$ 
decompositions of $x$ such that $(x_i,f_i),(x'_i,f_i)\in R$ for all $i=1,\ldots,n$.
\end{proof}

\begin{lemma}\label{decomposition}
	Let $R\in \mathsf{PRG}(E,F)$ and $S\in \mathsf{PRG}(F,G)$. Then the following two statements hold.
	\begin{enumerate}
		\item For every $w\in G^0$,
		\begin{equation*}
		\{u\in E^0\mid(u,w)\in (S\circ R)\}=\bigsqcup_{v\in S^{-1}(w)} \{u\in E^0\mid(u,v)\in R\}.
		\end{equation*}
		\item\label{decompostion.edges} For every $g\in G^1$,
		\begin{align*}
			\{x&\in  \mathrm{FP}(E)\mid(x,g)\in S\circ R\}=\bigsqcup_{v\in S^{-1}(g)\cap F^0}\{u\in E^0\mid(u,v)\in R\}\;\sqcup\\
			&\bigsqcup_{k=1}^n\ \bigsqcup_{y\in S^{-1}(g),\,|y|=k}\{x\in  \mathrm{FP}(E)\mid\exists!\, (x_1,f_1),\ldots, (x_k,f_k)\in R:x
			=x_1\cdots x_k,y=f_1\cdots f_k\},
		\end{align*}
		where $n=\max\{|y|\mid(y,g)\in S\}$.
	\end{enumerate}
\end{lemma}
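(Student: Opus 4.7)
My plan is to prove both parts directly from the definition of relation composition, letting the category axioms of $\mathsf{PRG}$ supply the necessary uniqueness and disjointness. For (1), I would unpack $(u,w) \in S \circ R$ as the existence of $y \in \mathrm{FP}(F)$ with $(u,y) \in R$ and $(y,w) \in S$; the defining axiom $R^{-1}(F^0) \subseteq E^0$ applied to $S$ (together with $w \in G^0$) forces $y \in F^0$, and then the same axiom for $R$ forces $u \in E^0$. Hence every element of the left-hand side determines $v := y \in S^{-1}(w)$ placing $u$ in the corresponding summand, and conversely each summand is contained in the left-hand side. Disjointness of the union is immediate from vertex-disjointness of $R$: if $u$ lay in two summands indexed by $v \neq v'$, then $(u,v),(u,v') \in R$, contradicting $R^{-1}(v) \cap R^{-1}(v') = \emptyset$.

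For (2), I would proceed analogously but split into two regimes according to whether the intermediate $y$ is a vertex. Fix $(x,g) \in S \circ R$ with $g \in G^1$, and choose $y \in \mathrm{FP}(F)$ with $(x,y) \in R$ and $(y,g) \in S$. If $|y|=0$, the relation morphism axiom forces $x \in E^0$, contributing $x$ to the first union at index $v = y \in S^{-1}(g) \cap F^0$. If $|y| = k \geq 1$, write $y = f_1 \cdots f_k$; decomposability of $R$ yields $x_1,\ldots,x_k \in \mathrm{FP}(E)$ with $x = x_1 \cdots x_k$ and $(x_i,f_i) \in R$, and the second assertion of Lemma~\ref{lem:decomposition of composition} upgrades this factorization to a unique one, placing $x$ in the appropriate summand. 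Properness of $S$ ensures that $S^{-1}(g)$ is finite, so $n := \max\{|y| : (y,g) \in S\}$ is a well-defined integer; the outer index set is thereby legitimate.

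The one genuinely delicate step, and what I expect to be the main obstacle, is pairwise disjointness of the double union in (2) across different choices of $y$: a priori, a single $x$ might witness several distinct intermediate paths. This is where the first assertion of Lemma~\ref{lem:decomposition of composition} becomes indispensable. Given $(x,g) \in S \circ R$, target-preservation of $R$ yields $(t_{PE}(x), t_{PF}(y)) \in R$ for any valid $y$, and the cited lemma (invoking vertex-disjointness of $R$ and target-injectivity of $S$) asserts that such a $y \in S^{-1}(g)$ is unique. Consequently the $y$ attached to any given $x$ is uniquely determined, so $x$ lies in exactly one summand; combining with the existence and uniqueness inside each fixed $y$ obtained above, the decomposition of (2) is established.
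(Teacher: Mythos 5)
Your proof is correct and follows essentially the same route as the paper's (two-line) argument: part (1) from the definition of composition together with the vertex-disjointness of $R$, and part (2) from the definition of composition together with Lemma~\ref{lem:decomposition of composition}, whose first assertion is indeed the tool that forces each $x$ to lie in exactly one summand. The only detail you leave implicit is the reverse inclusion in (2), which is immediate from the multiplicativity of $R$ and the definition of composition.
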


\begin{proof}
	The first equality follows from the definition of composition and the fact that $R$ and $S$ are vertex disjoint. 
	The second equality follows from the definition of composition and Lemma~\ref{lem:decomposition of composition}.
\end{proof}

We are now ready to claim the main result of this section:
\begin{theorem}\label{thm:functor.PRG}
The association of path algebras to graphs and algebra homomorphisms $R_P^*$ to relation morphisms $R$ from the category $\mathsf{PRG}$
defines a contravariant functor from $\mathsf{PRG}$ to $k$-$\mathsf{Alg}$.
\end{theorem}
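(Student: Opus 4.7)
The plan is to verify the two defining properties of a contravariant functor: identity-preservation $(\Id_E)_P^* = \Id_{kE}$ and composition-reversal $(S\circ R)_P^* = R_P^* \circ S_P^*$ for every composable pair $R\in\mathsf{PRG}(E,F)$ and $S\in\mathsf{PRG}(F,G)$. Because $kG$ is presented by the generators $\{S_w:w\in G^0\cup G^1\}$ with the relations (V) and (E1), each assertion reduces to an equality of algebra homomorphisms evaluated on these generators (both sides being algebra homomorphisms by Lemma~\ref{lem:RP}). The identity case is immediate, since $\Id_E$ corresponds to the diagonal relation and its preimage of any $y$ is $\{y\}$.

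The case $w\in G^0$ of the composition axiom is a direct transcription of Lemma~\ref{decomposition}(1): both sides become $\sum_u S_u$ indexed by the same set, once one recalls that $R^{-1}(v)\subseteq E^0$ for any $v\in F^0$ (a clause in the definition of a relation morphism).

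The substantive step is $g\in G^1$. I would write $S_P^*(S_g)=\sum_{y\in S^{-1}(g)}S_y$ and apply $R_P^*$ term by term. For $y\in F^0$, the summand $R_P^*(S_y)$ is already a sum of vertex generators matching the vertex contribution in Lemma~\ref{decomposition}(2). For $y=f_1\cdots f_k\in \mathrm{FP}(F)$ with $k\geq 1$, the algebra-homomorphism property of $R_P^*$ expands
\[
R_P^*(S_y)=\sum_{(x_1,\ldots,x_k)\in R^{-1}(f_1)\times\cdots\times R^{-1}(f_k)} S_{x_1}\cdots S_{x_k},
\]
and each summand equals $S_{x_1\cdots x_k}$ when the concatenation is defined and $0$ otherwise. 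Multiplicativity of $R$ promotes every concatenable tuple to an element of $R^{-1}(y)$, decomposability gives the converse, and Lemma~\ref{lem:decomposition of composition} supplies uniqueness of the decomposition. This places the surviving tuples in bijection with the set appearing in the $y$-indexed factor of Lemma~\ref{decomposition}(2). Summing over $y\in S^{-1}(g)$ and using that Lemma~\ref{decomposition}(2) writes $(S\circ R)^{-1}(g)$ as the disjoint union of these sets, the edge case closes and functoriality follows.

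The step I expect to be delicate is ruling out collisions in the expansion of $R_P^*(S_y)$: a priori two different tuples $(x_1,\ldots,x_k)$ could yield the same concatenated path $x_1\cdots x_k$, spoiling the bijection and introducing a spurious multiplicity. This is precisely the pathology of the example preceding the definition of $\mathsf{PRG}$, where $S_{ee''e'}$ acquired coefficient $2$. What rescues the argument in $\mathsf{PRG}$ is target injectivity of $R$ and $S$, which forces the decomposition of each surviving $x$ to be unique via Lemma~\ref{lem:decomposition of composition}; vertex disjointness enters that same lemma to pin down the intermediate path $y$ uniquely. Once this uniqueness is established, the two sums collapse onto the same basis expansion, completing the proof.
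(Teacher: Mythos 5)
Your proposal is correct and follows essentially the same route as the paper's proof: reduction to the generators $S_w$, $S_g$, the vertex case via Lemma~\ref{decomposition}(1), and the edge case by expanding $R_P^*(S_{f_1})\cdots R_P^*(S_{f_k})$ and matching the surviving concatenations against $(S\circ R)^{-1}(g)$ through Lemma~\ref{decomposition}(2) and Lemma~\ref{lem:decomposition of composition}. Your explicit discussion of why no concatenation is counted twice (target injectivity forcing uniqueness of the decomposition, vertex disjointness pinning down the intermediate path $y$) makes precise exactly the point the paper delegates to the ``$\exists!$'' in Lemma~\ref{decomposition}(2).
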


\begin{proof}
It is clear that $(\Id_E)_P^*=\Id_{kE}$ for any graph~$E$. Next,
let $R\in \mathsf{PRG}(E,F)$ and $S\in \mathsf{PRG}(F,G)$. Given $w\in G^0$, using Lemma~\ref{decomposition}, we compute:
\begin{align}
	(S\circ R)_P^*(S_w)&=\sum_{u\in (S\circ R)^{-1}(w)} S_u 
	=\sum_{v\in S^{-1}(w)}\ \sum_{u\in R^{-1}(v)} S_u\nonumber\\
	&=\sum_{v\in S^{-1}(w)} R_P^*(S_v) 
	=R_P^*\Big(\sum_{v\in S^{-1}(w)} S_v\Big) \nonumber\\
	&=R_P^*(S_P^*(S_w))
	=R_P^*\circ S_P^*(S_w).
\end{align}
Given $g\in G^1$, let $n=\max\{|y|\mid(y,g)\in S\}$. Then
\begin{align}
	S_P^*(S_g)&=\sum_{y\in R^{-1}(g)} S_y 
	=\sum_{v\in S^{-1}(g)\cap F^0}S_v\;+\;\sum_{k=1}^n\ \sum_{y\in S^{-1}(g),\,|y|=k}S_y.
\end{align}
Now, applying $R^*$ and using that it is an algebra homomorphism, we obtain
\begin{equation}\label{preserves.composition}
\hspace*{-5pt}
R_P^*(S_P^*(S_g))=\hspace{-5pt}\sum_{v\in S^{-1}(g)\cap F^0}R_P^*(S_v)\;+
\;\sum_{k=1}^n\sum_{\substack{f_1,\ldots,f_k\in F^1,\\f_1\cdots f_k\in S^{-1}(g)}}R_P^*(S_{f_1})\cdots R_P^*(S_{f_k}).
\end{equation}
Next, when expanding every term $R_P^*(S_{f_1})\cdots R_P^*(S_{f_k})$ in the above sum, 
we obtain a sum of terms of the form $S_{x_1}\cdots S_{x_k}$, where $(x_i,f_i)\in R$. Such terms differ
from zero only  when $x_1\ldots x_k\in  \mathrm{FP}(E)$, in which case $(x_1\ldots x_k,g)\in S\circ R$. Combining this observation together with 
Lemma~\ref{decomposition}\ref{decompostion.edges} and Equation~\eqref{preserves.composition}, we conclude  that
\[
(S\circ R)_P^*(S_g)=\sum_{x\in (S\circ R)^{-1}(g)}S_x=R_P^*(S_P^*(S_g)).
\]
Hence, $(S\circ R)_P^*=R_P^*\circ S_P^*$.
\end{proof}

\subsection{Subcategories and factorizations}

\begin{proposition}
The embedding of $\mathsf{PG}^{\mathrm{op}}$ into $\mathsf{RG}$ induces an embedding of $\mathsf{IPG}^{\mathrm{op}}$ into $\mathsf{PRG}$.
The embedding of $\mathsf{OP}$ into $\mathsf{RG}$ induces an embedding into $\mathsf{PRG}$ of 
the subcategory of $\mathsf{POG}$ obtained by restricting morphisms in $\mathsf{POG}$ to 
target-injective morphisms.
\end{proposition}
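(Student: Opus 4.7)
The plan is to observe that, since the previous proposition already gives embeddings $\mathsf{PG}^{\mathrm{op}}\hookrightarrow\mathsf{RG}$ and $\mathsf{OG}\hookrightarrow\mathsf{RG}$, faithfulness is automatic in both cases and what remains is to verify that, for every admissible $\vartheta$ or $\varphi$, the associated relation $R^\vartheta$ or $R_\varphi$ lies in $\mathsf{PRG}$, i.e.\ satisfies the five defining properties: multiplicativity, decomposability, properness, vertex-disjointness and target-injectivity.

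For the first claim, fix $\vartheta\in\mathsf{IPG}(E,F)$, so $R^\vartheta\subseteq  \mathrm{FP}(F)\times  \mathrm{FP}(E)$ is the graph of $\vartheta$. Both multiplicativity and decomposability are immediate translations of the fact that $\vartheta$ is a path homomorphism, i.e.\ $\vartheta(yy')=\vartheta(y)\vartheta(y')$: multiplicativity is the ``$\Leftarrow$'' direction, and decomposability is the ``$\Rightarrow$'' direction, where one simply sets $x:=\vartheta(y)$, $x':=\vartheta(y')$. Because $(R^\vartheta)^{-1}(y)=\{\vartheta(y)\}$ is a singleton for every $y\in \mathrm{FP}(E)$, properness and target-injectivity are trivial. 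The only condition with content is vertex-disjointness: $(R^\vartheta)^{-1}(v)\cap(R^\vartheta)^{-1}(v')=\emptyset$ for distinct $v,v'\in E^0$ amounts to $\vartheta(v)\neq\vartheta(v')$, which is precisely the definition of $\mathsf{IPG}$.

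For the second claim, fix $\varphi\in\mathsf{POG}(E,F)$ that is target injective, and consider $R_\varphi\subseteq \mathrm{FP}(E)\times \mathrm{FP}(F)$. Multiplicativity is again the straightforward $\varphi(xx')=\varphi(x)\varphi(x')$. Decomposability uses crucially that $\varphi$ is length preserving: given $(\overline{x},yy')\in R_\varphi$, write $\overline{x}=e_1\cdots e_n$ and split at position $|y|$ (with the obvious degenerate splitting when $y$ or $y'$ is a vertex); the resulting $x,x'$ satisfy $\varphi(x)=y$ and $\varphi(x')=y'$ since lengths match. Vertex-disjointness is automatic because $\varphi$ is a function. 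Properness of $R_\varphi$ on a general path $y=f_1\cdots f_n$ follows from properness of $\varphi\in\mathsf{POG}$ at each $f_i$ together with $\varphi^{-1}(y)\subseteq \varphi^{-1}(f_1)\times\cdots\times\varphi^{-1}(f_n)$. Finally, since $\varphi$ preserves lengths, $R_\varphi^{-1}(f)=\varphi^{-1}(f)\subseteq E^1$ for $f\in F^1$, and target-injectivity of $R_\varphi$ in the sense of $\mathsf{PRG}$ is literally the target-injectivity hypothesis imposed on $\varphi$ (and is slightly weaker than it, since it drops the requirement that the image land in $\varphi^{-1}(t_F(f))$).

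Neither half presents a serious obstacle: the only place where the categorical restrictions ``bite'' are the matches $\vartheta$ injective on vertices $\Leftrightarrow$ $R^\vartheta$ vertex-disjoint, and $\varphi$ target injective in $\mathsf{OG}$ $\Rightarrow$ $R_\varphi$ target injective in the $\mathsf{PRG}$ sense. The remaining conditions are either degenerate (singleton preimages) or follow mechanically from $\vartheta$, respectively $\varphi$, being a path homomorphism. Once these checks are done, functoriality of the inclusions $\mathsf{IPG}^{\mathrm{op}}\hookrightarrow\mathsf{PRG}$ and (target-injective $\mathsf{POG})\hookrightarrow\mathsf{PRG}$ is inherited from the ambient embeddings into $\mathsf{RG}$.
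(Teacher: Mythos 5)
Your proof is correct and takes essentially the same route as the paper's: both verify the five defining conditions of $\mathsf{PRG}$ directly for $R^\vartheta$ and $R_\varphi$, using the same key observations (singleton preimages for $R^\vartheta$, vertex-disjointness of $R^\vartheta$ being injectivity of $\vartheta$ on vertices, and length-preservation of $\varphi$ driving decomposability, properness and target-injectivity of $R_\varphi$). One tiny quibble: your parenthetical that target-injectivity of $R_\varphi$ is ``slightly weaker'' than that of $\varphi$ is not quite right --- injectivity of a map does not depend on the declared codomain, and since $t_{PE}(\varphi^{-1}(f))\subseteq\varphi^{-1}(t_{PF}(f))$ automatically, the two conditions are equivalent, as the paper notes.
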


\begin{proof}
Let $\varphi\in\mathsf{POG}(E,F)$. Since $\varphi$ is length preserving and multiplicative, we see that $R_\varphi$ is decomposable, 
multiplicative and vertex disjoint. Properness and target-injectivity of $\varphi$ and $R_{\varphi}$ are easily seen to be equivalent.

Now, let $\vartheta\in\mathsf{IPG}^{\mathrm{op}}(E,F)$. Multiplicativity and decomposability are immediate. Furthermore, the relation $R^\vartheta$ is injective, 
so it is, in particular, target injective. For properness, observe that $|R^{-1}(y)|=1$ for all $y\in  \mathrm{FP}(F)$. 
Finally, we see that $R^\vartheta$ being vertex disjoint is the 
same as $\vartheta$ being injective on vertices.
\end{proof}

\begin{proposition}\label{prop:factunique}
Every $R\in\mathsf{PRG}(E,F)$ admits a factorization
$R=R_\varphi\circ R^\vartheta$, where $\vartheta\in\mathsf{IPG}(G,E)$ and $\varphi\in\mathsf{POG}(G,F)$ with
$\varphi$ target injective. This factorization is unique (up to a canonical isomorphism), and given explicitly by the construction in
Proposition~\ref{prop:factor}.
\end{proposition}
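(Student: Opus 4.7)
The plan is to take the canonical factorization $R=R_\varphi\circ R^\vartheta$ through $G_R$ supplied by Proposition \ref{prop:factor}, verify that $\vartheta$ and $\varphi$ land in the finer subcategories, and then invoke Proposition \ref{prop:universal1} to upgrade its surjective comparison map into a genuine isomorphism.

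Since $R\in\mathsf{PRG}(E,F)$ is in particular multiplicative and decomposable, Proposition \ref{prop:factor} gives $\vartheta\in\mathsf{PG}(G_R,E)$ and $\varphi\in\mathsf{OG}(G_R,F)$, and three additional properties remain to be checked. For $\vartheta\in\mathsf{IPG}$: if $(v,w),(v,w')\in G_R^0$, then $v\in R^{-1}(w)\cap R^{-1}(w')$, so vertex-disjointness of $R$ forces $w=w'$, which says exactly that $\vartheta$ is injective on $G_R^0$. For $\varphi\in\mathsf{POG}$: for every $y\in F^0\cup F^1$, the fiber $\varphi^{-1}(y)$ is in bijection with $R^{-1}(y)$ via the first-coordinate projection, which is finite by properness of $R$. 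For target-injectivity of $\varphi$ in the $\mathsf{OG}$ sense: if $(x,f),(x',f)\in G_R^1$ share the same $t_{G_R}$-image, then $t_{PE}(x)=t_{PE}(x')$, and target-injectivity of the relation morphism $R$ forces $x=x'$.

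For uniqueness, let $(G',\vartheta',\varphi')$ be any other triple satisfying $R_{\varphi'}\circ R^{\vartheta'}=R$ together with the same subcategory constraints. Proposition \ref{prop:universal1} already hands me a unique surjective $\pi\in\mathsf{OG}(G',G_R)$, $\pi(z)=(\vartheta'(z),\varphi'(z))$, commuting with both factorizations, so it suffices to prove $\pi$ is injective. On vertices, $\pi(u)=\pi(u')$ implies $\vartheta'(u)=\vartheta'(u')$, whence $u=u'$ by vertex-injectivity of $\vartheta'$. For edges $z,z'\in G'^1$, $\pi(z)=\pi(z')$ yields $\varphi'(z)=\varphi'(z')=:f\in F^1$ together with $\vartheta'(z)=\vartheta'(z')$; applying $t_{PE}$ and using that $\vartheta'$ is a path morphism, I obtain $\vartheta'(t_{G'}(z))=\vartheta'(t_{G'}(z'))$, so vertex-injectivity of $\vartheta'$ gives $t_{G'}(z)=t_{G'}(z')$, and target-injectivity of $\varphi'$ on $\varphi'^{-1}(f)$ then forces $z=z'$. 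Thus $\pi$ is bijective on vertices and on edges, hence an isomorphism in $\mathsf{OG}$.

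The main obstacle I anticipate is the edge-injectivity step in the uniqueness argument: it is the only point at which all three refinements on the comparison factorization (vertex-injectivity of $\vartheta'$ and both length-preservation and target-injectivity of $\varphi'$) are simultaneously needed, and the example displayed right after Proposition \ref{prop:universal1} already shows that without target-injectivity of $\varphi'$ the map $\pi$ can genuinely fail to be injective.
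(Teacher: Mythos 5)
Your proposal is correct and follows essentially the same route as the paper: the three subcategory conditions on $\vartheta$ and $\varphi$ are deduced from vertex disjointness, properness, and target injectivity of $R$ exactly as in the paper's argument, and the uniqueness step upgrades the surjective comparison map $\pi$ from Proposition~\ref{prop:universal1} to an isomorphism by checking injectivity separately on vertices (via vertex-injectivity of $\vartheta'$) and on edges (via targets and target-injectivity of $\varphi'$), which is precisely the paper's proof.
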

\begin{proof}
Consider  the factorization $R=R_\varphi\circ R^\vartheta$ given by Proposition~\ref{prop:factor}. To see that
$\vartheta\in\mathsf{IPG}(G,E)$, it remains to prove that 
$\vartheta$ is injective on vertices. Let $(u,v),(u',v')\in G_R^0$ be such that $\vartheta(u,v)=\vartheta(u',v')$, that is $u=u'$. Since $R$ is vertex disjoint, we infer 
that $v=v'$, whence $(u,v)=(u',v')$.

The fact that the properness of $R$ implies the properness of $\varphi$ is immediate. To see the target injectivity, take $f\in F^1$, and
note that $\varphi^{-1}(t_{F}(f))=\{(u,t_{F}(f))\in R\mid u\in E^0\}$ and $\varphi^{-1}(f)=\{(x,f)\in R\mid x\in  \mathrm{FP}(E)\}$. The map
\[
\varphi^{-1}(f)\ni(x,f)\longmapsto (t_{PE}(x),t_{F}(f))\in \varphi^{-1}(t_F(f))
\]
is injective because $R$ is target injective.

To show the uniqueness, take morphisms $\varphi'\in\mathsf{POG}(G',F)$ 
and $\vartheta'\in\mathsf{IPG}(G',E)$ such that $R_{\varphi'}\circ R^{\vartheta'}=R$, with $\varphi'$ target injective.
The canonical morphism $\pi\in\mathsf{OG}(G',G_R)$ of Proposition~\ref{prop:universal1} is surjective. For the injectivity,
as $\pi$ is a graph homomorphism, it is enough to prove the injectivity on vertices and edges. On vertices, it follows from the injectivity on vertices of $\vartheta'$.
Concerning edges, let $g_1,g_2\in G'^1$ two edges such that $\pi(g_1)=\pi(g_2)$. From $\vartheta'(g_1)=\vartheta'(g_2)$ we conclude  that 
$\vartheta'(t_G(g_1))=\vartheta'(t_G(g_2))$, and from the injectivity on vertices we deduce that $t_{G}(g_1)=t_{G}(g_2)$. Finally, since $\varphi'(g_1)=\varphi'(g_2)$, 
it follows from target injectivity that $g_1=g_2$.
\end{proof}

To end this section, note that  Example~\ref{ex:counter} also shows that not every $R\in\mathsf{PRG}(E,F)$ enjoys a pushout-type decomposition.

\section{Cohn path algebras and Toeplitz graph C*-algebras}\label{sec:5}
\noindent
The next step is to determine a subcategory of $\mathsf{PRG}$ admitting a functor to $k$-$\mathsf{Alg}$ by assigning Cohn path algebras to graphs
and algebra homomorphisms to relation morphisms. Unlike in the preceding section, this time we can construct a subcategory of 
$\mathsf{PRG}$ that contains both $\mathsf{MIPG}^{\mathrm{op}}$ and $\mathsf{TBPOG}$. Completing Cohn path algebras to Toeplitz graph C*-algebras
automatically yields a functor to the category C*-$\mathsf{Alg}$ of C*-algebras and *-homomorphisms.

\subsection{The subcategory $\mathsf{CRG}$ and the Cohn--Toeplitz functor}

To induce algebra homomorphisms between Cohn path algebras, and subsequently *-homomorphisms between Toeplitz graph C*-algebras,
we need conditions ensuring that the first Cuntz--Krieger relation (CK1) (see Definition~\ref{ck1})
is respected.
\begin{definition}
Given a relation morphism $R:E\to F$, we say that it is \emph{target surjective}
if the restriction $t_{PE}:R^{-1}(f)\to R^{-1}(t_F(f))$
is surjective for all $f\in F^1$. We say that $R$ is \emph{monotone}
whenever $(x,f),(x',f')\in G_R^1$ and $x\preceq x'$ imply that $(x,f)=(x',f')$.
\end{definition}

Now we are ready to prove the composability lemma:
\begin{lemma}\label{lem:composition-TS-M}
Let $R\in\mathsf{PRG}(E,F)$ and $S\in \mathsf{PRG}(F,G)$. Then,
\begin{enumerate}
\item 
if $R$ and $S$ are target surjective, then $S\circ R$ is target surjective;
\item 
if $R$ and $S$ are monotone, then $S\circ R$ is monotone.
\end{enumerate}
\end{lemma}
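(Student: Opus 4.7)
For part (1), fix $g\in G^1$ and $u\in(S\circ R)^{-1}(t_G(g))$; my goal is to produce $x\in(S\circ R)^{-1}(g)$ with $t_{PE}(x)=u$. Unpacking the composition and using that $t_G(g)\in G^0$ together with the relation-morphism axiom $S^{-1}(G^0)\subseteq F^0$ gives some $v\in F^0$ with $(u,v)\in R$ and $(v,t_G(g))\in S$. Target surjectivity of $S$ on the edge $g$ then yields $y\in S^{-1}(g)$ with $t_{PF}(y)=v$. If $y=v$ (vertex case), take $x:=u$ and we are done. Otherwise write $y=f_1\cdots f_n$ with $n\geq 1$ and build $x_n,\ldots,x_1\in\mathrm{FP}(E)$ by downward induction: at the $i$-th step, source preservation of $R$ gives $s_{PE}(x_{i+1})\in R^{-1}(t_F(f_i))$, and target surjectivity of $R$ on $f_i$ produces $x_i\in R^{-1}(f_i)$ with $t_{PE}(x_i)=s_{PE}(x_{i+1})$. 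The concatenation $x:=x_1\cdots x_n$ is then well-defined; multiplicativity of $R$ lifts it to $(x,y)\in R$, and composing with $(y,g)\in S$ delivers $(x,g)\in S\circ R$ with $t_{PE}(x)=u$.

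For part (2), take $(x,g),(x',g')\in G^1_{S\circ R}$ with $x\preceq x'$, and pick intermediate $y,y'\in\mathrm{FP}(F)$ such that $(x,y),(x',y')\in R$ and $(y,g),(y',g')\in S$. I split by the lengths of $y$ and $y'$. The ``both vertices'' case is immediate: $x=x'\in E^0$, vertex disjointness of $R$ forces $y=y'$, and monotonicity of $S$ applied to $(y,g),(y,g')\in G^1_S$ gives $g=g'$. Each of the two mixed cases (exactly one of $y,y'$ is a vertex) is ruled out by contradiction: combining $x\preceq x'$ with source preservation and vertex disjointness of $R$ forces the vertex to coincide with $s_{PF}$ of the longer path, so the vertex is $\preceq$-comparable (as an initial subpath) to the longer path inside $G^1_S$; monotonicity of $S$ then equates the two pairs, contradicting the length mismatch.

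The substantive case is when both $|y|,|y'|\geq 1$. Here I would apply Lemma~\ref{lem:decomposition of composition} to obtain canonical decompositions $x=x_1\cdots x_n$, $y=f_1\cdots f_n$ and $x'=x'_1\cdots x'_{n'}$, $y'=f'_1\cdots f'_{n'}$ with $(x_i,f_i),(x'_j,f'_j)\in R$. Since $x_1$ and $x'_1$ are both initial subpaths of $x'$, they are $\preceq$-comparable, so monotonicity of $R$ forces $(x_1,f_1)=(x'_1,f'_1)$. Iterating this, cancelling the common initial factor at each stage, yields $x_i=x'_i$ and $f_i=f'_i$ for all $i\leq\min(n,n')$; hence $y$ and $y'$ are $\preceq$-comparable, and monotonicity of $S$ applied to $(y,g),(y',g')\in G^1_S$ forces $y=y'$ and $g=g'$, whence $n=n'$ and $x=x'$. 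The principal obstacle is organizing the case analysis cleanly, since Lemma~\ref{lem:decomposition of composition} supplies canonical decompositions only when the intermediate path has positive length, so the vertex sub-cases must be dispatched by hand; the common denominator in every branch is applying monotonicity of $S$ to an element of $G^1_S$ that would have to equal a path of different length.
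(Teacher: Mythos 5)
Your proof is correct and follows essentially the same route as the paper's: the same backward recursive lifting (via source preservation, target surjectivity of $R$ and $S$, and multiplicativity) for part (1), and the same case split on the lengths of the intermediate paths $y,y'$ followed by the prefix-cancellation iteration using monotonicity of $R$ and then of $S$ for part (2). The only cosmetic difference is that you invoke Lemma~\ref{lem:decomposition of composition} where the paper simply uses decomposability of $R$ to produce the factorizations $x=x_1\cdots x_n$, $x'=x'_1\cdots x'_m$.
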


\begin{proof}
(1) Let $u\in E^0$ and $g\in G^1$ be such that $(u,t_G(g))\in S\circ R$. Since the preimages of  relations preserve vertices, by the definition of composition, there 
exists $v\in F^0$ such that $(u,v)\in R$ and $(v,t_G(g))\in S$. Also, since $S$ is target surjective, there exists $y\in  \mathrm{FP}(F)$ such that $t_{PF}(y)=v$ 
and $(y,g)\in S$. 
If $y\in F^0$, then $y=t_{PF}(y)=v$. In this case, $(u,g)\in S\circ R$ and $t_E(u)=u$.

Now, assume that $y=f_1\ldots f_n$, where  $f_1,\ldots,f_n\in F^1$, for some $n\geq 1$. Note that $t_F(f_n)=t_{PF}(y)=v$. Since $R$ is target surjective, there exists 
$x_n\in  \mathrm{FP}(E)$ such that $(x_n,f_n)\in R$ and $t_{PE}(x_n)=u$. Recursively, for $i=1,\ldots,n-1$, we find $x_i$ from $x_{i+1}$ as follows.
Given $(x_{i+1},f_{i+1})\in R$, 
we note that $(s_{PE}(x_{i+1}),s_F(f_{i+1}))\in R$. Since $t_F(f_i)=s_F(f_{i+1})$ and $R$ is target surjective, there exists 
$x_i\in  \mathrm{FP}(E)$ such that $(x_i,f_i)\in R$ and 
$t_{PE}(x_i)=s_{PE}(x_{i+1})$. Put $x:=x_1\ldots x_n$. Then $t_{PE}(x)=t_{PE}(x_n)=u$, and the multiplicativity of $R$ implies that $(x,y)\in R$. Hence, 
$(x,g)\in S\circ R$, so $S\circ R$ is target surjective.

(2) Let $x,x'\in  \mathrm{FP}(E)$ and $g,g'\in G^1$ be such that $(x,g),(x',g')\in S\circ R$ and $x\preceq x'$. 
By definition, there exist $y,y'\in  \mathrm{FP}(F)$ such that $(x,y),(x',y')\in R$ 
and $(y,g),(y',g')\in S$.
First, assume  that $y'\in F^0$. In this case, $x'\in E^0$ as the preimage of $R$ preserves vertices. Furthermore, as $x\preceq x'$ implies that $x=x'$,
we conclude that 
$(x',y)\in R$. Hence, $s_{PF}(y)=y'$ because $R$ preserves sources and is vertex disjoint. Therefore, $y'\preceq y$, and it follows from
the monotonicity of $S$ that $g=g'$.

Now, assume that $y\in F^0$. Observing that $s_{PE}(x)=s_{PE}(x')$, a similar argument as above, shows that $s_{PF}(y')=y$. In particular, we know
that $y\preceq y'$. Therefore,
since $S$ is monotone, we infer that $y'=y\in F^0$, which brings us back to the previous case.

Finally, assume that $y=f_1\cdots f_n$ and $y'=f'_1\cdots f'_m$, where  \mbox{$f_1,\ldots,f_n,f'_1,\ldots,f'_m\in F^1$}
and $m,n\geq 1$. By decomposability, there are 
$x_1,\ldots,x_n,x'_1,\ldots,x'_m\in  \mathrm{FP}(E)$ such that $x=x_1\ldots x_n$, $x'=x'_1\ldots x'_m$, $(x_i,f_i)\in R$ for all $i=1,\ldots,n$ and $(x'_j,f'_j)\in R$ for all 
$j=1\ldots,m$. Since $x\preceq x'$, we note  that either $x_1\preceq x'_1$ or $x_1'\preceq x_1$, so, from the montonicity of $R$, we conclude that $f_1=f'_1$ and 
$x_1=x'_1$. In this case, $x_2\ldots x_n\preceq x'_2\ldots x'_m$. Repeating the argument $\min\{m,n\}$ times, we see that either $y\preceq y'$ or $y'\preceq y$. In 
any  case, since $S$ is monotone, we conclude that $g=g'$ and $y=y'$. The latter implies that $m=n$, so  $x=x'$.
\end{proof}

As it is clear that the identity morphisms satisfy both target surjectivity and monotonicity, we thus obtain the domain category for the Cohn--Toeplitz functor:
\begin{theorem}
Relation morphisms in $\mathsf{PRG}$ that are target surjective and monotone define a subcategory whose objects are all directed graphs. (We denote this 
subcategory by $\mathsf{CRG}$.) 
\end{theorem}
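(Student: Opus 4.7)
The plan is to verify the two requirements for the specified class of morphisms to constitute a subcategory of $\mathsf{PRG}$: closure under composition, and inclusion of all identity morphisms. Nearly all the work has already been done in Lemma~\ref{lem:composition-TS-M}, so the proof will be short and assembly-style.

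For closure under composition, I would simply invoke Lemma~\ref{lem:composition-TS-M}: if $R\in\mathsf{PRG}(E,F)$ and $S\in\mathsf{PRG}(F,G)$ are both target surjective and monotone, then $S\circ R$ is target surjective (part~(1) of that lemma) and monotone (part~(2)). Since $\mathsf{PRG}$ has already been shown to be a category, $S\circ R$ automatically lies in $\mathsf{PRG}(E,G)$, so it belongs to the proposed subcategory.

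For the identities, I would check that, for any graph $E$, the relation $\Id_E=\{(x,x)\mid x\in\mathrm{FP}(E)\}$ is target surjective and monotone. Target surjectivity is immediate: for any $f\in E^1$ one has $\Id_E^{-1}(f)=\{f\}$ and $\Id_E^{-1}(t_E(f))=\{t_E(f)\}$, and the restriction of $t_{PE}$ sends $f$ to $t_E(f)$, which is surjective onto the singleton target. Monotonicity is equally immediate: $G_{\Id_E}^1=\{(e,e)\mid e\in E^1\}$, and if $(e,e),(e',e')\in G_{\Id_E}^1$ satisfy $e\preceq e'$, then since $e,e'\in E^1$ are both of length one the inequality forces $e=e'$, hence $(e,e)=(e',e')$.

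There is essentially no obstacle in this theorem; the one mild point worth stressing is that the theorem is the \emph{combination} of the two closure statements in Lemma~\ref{lem:composition-TS-M} with the previously established category structure of $\mathsf{PRG}$, rather than a standalone result. In particular, one should highlight that target surjectivity and monotonicity are additional conditions \emph{on top of} membership in $\mathsf{PRG}$, so the relevant composability assertions (multiplicativity, decomposability, properness, vertex disjointness, target injectivity) are already guaranteed by $\mathsf{PRG}$ being a category, and the only new content is the preservation of target surjectivity and monotonicity handled by Lemma~\ref{lem:composition-TS-M}.
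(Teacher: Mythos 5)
Your proposal is correct and follows the paper's own route exactly: the paper obtains closure under composition directly from Lemma~\ref{lem:composition-TS-M} and notes that the identity morphisms are clearly target surjective and monotone. Your explicit verification of the identity case (via $\Id_E^{-1}(f)=\{f\}$ and $G_{\Id_E}^1=\{(e,e)\mid e\in E^1\}$) just spells out what the paper leaves as ``clear.''
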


To obtain the functor, first we prove:
\begin{lemma}\label{lem:RC}
	Let $R:E\to F$ be a relation morphism in~$\mathsf{CRG}$.
	Then the formulas
	\begin{equation*}
		R_C^*(S_y):= \sum_{x\in R^{-1}(y)}S_x,\qquad\quad
		R_C^*(S_{y^*}):= \sum_{x\in R^{-1}(y)}S_{x^*},
	\end{equation*}
	where $y\in F^0\cup F^1$ and $y^*:=y$ for $y\in F^0$, define an algebra homomorphism $R_C^*:C_k(F)\to C_k(E)$.
\end{lemma}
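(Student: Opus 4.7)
The plan is to invoke the universal property of the Cohn path algebra $C_k(F)$: defining $R_C^*$ on the generators as in the statement and checking that the defining relations (V), (E1), (E2), (CK1) from Definition~\ref{def:rela} are preserved will automatically extend $R_C^*$ to an algebra homomorphism $C_k(F) \to C_k(E)$.

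The relations (V) and (E1) involve only generators $S_y$ for $y \in F^0 \cup F^1$, and on these the formula for $R_C^*$ coincides verbatim with $R_P^*$ from Lemma~\ref{lem:RP}. Since $R \in \mathsf{CRG} \subseteq \mathsf{PRG}$, those relations are already in hand. The (E2) relations are dual to (E1) and follow by the same type of argument: for $f \in F^1$, expanding $R_C^*(S_{t_F(f)}) R_C^*(S_{f^*})$ shows that the only surviving terms are those $S_u S_{x^*}$ with $u = t_{PE}(x)$, and target preservation combined with vertex disjointness identifies exactly one such $u$ for each $x \in R^{-1}(f)$, recovering $R_C^*(S_{f^*})$. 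The other half of (E2) is analogous using source preservation.

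The main step is (CK1). Expanding gives
\[
R_C^*(S_{f^*}) R_C^*(S_{f'}) = \sum_{x \in R^{-1}(f),\, x' \in R^{-1}(f')} S_{x^*} S_{x'},
\]
and a standard telescoping in $C_k(E)$, iteratively applying (CK1) in $E$, shows that $S_{x^*} S_{x'}$ vanishes unless $x$ and $x'$ are comparable, i.e.\ $x \sim x'$. I would then split into two cases. When $f \neq f'$, monotonicity of $R$ forbids comparability: any $x \sim x'$ with $(x,f),(x',f') \in G_R^1$ forces $(x,f) = (x',f')$, contradicting $f \neq f'$. So the whole sum collapses to $0 = \delta_{f,f'}\,R_C^*(S_{t_F(f)})$. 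When $f = f'$, monotonicity again kills off-diagonal terms, leaving only $x = x'$, each of which contributes $S_{x^*} S_x = S_{t_{PE}(x)}$ by telescoping. Finally, target injectivity and target surjectivity together give a bijection $t_{PE} : R^{-1}(f) \to R^{-1}(t_F(f))$, which reindexes the resulting sum to $\sum_{u \in R^{-1}(t_F(f))} S_u = R_C^*(S_{t_F(f)})$, as needed.

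The principal obstacle is this (CK1) verification, because it is the single place where monotonicity, target injectivity, target surjectivity and vertex disjointness are all exploited simultaneously: monotonicity suppresses off-diagonal and cross terms, while the target-bijection property reassembles the diagonal into precisely the correct vertex sum. The fact that this balancing works out without any leftover terms is exactly what motivates the definition of $\mathsf{CRG}$.
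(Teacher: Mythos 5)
Your proposal is correct and follows essentially the same route as the paper: relations (V), (E1), (E2) are handled as in Lemma~\ref{lem:RP}, and the (CK1) verification splits into the cases $f\neq f'$ and $f=f'$, using monotonicity to kill non-comparable (resp.\ off-diagonal) terms and target bijectivity to reindex the diagonal sum as $\sum_{u\in R^{-1}(t_F(f))}S_u$. Your explicit remark that $S_{x^*}S_{x'}$ vanishes precisely when $x\not\sim x'$ (rather than merely when $x\neq x'$) is in fact a slightly more careful phrasing of the step the paper states tersely.
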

\begin{proof}
	As in the proof of Lemma~\ref{lem:RP}, we can easily see that $R_C^*$ satisfies (V), (E1) and~(E2). It remains to prove that $R_C^*$ satisfies~(CK1). 
	To this end, take  $f,f'\in F^1$. First, assume that $f\neq f'$. In this case,
	\[
	R_C^*(S_{f^*}) R_C^*(S_{f'})=\sum_{x\in R^{-1}(f)}\ \sum_{x'\in R^{-1}(f')}S_{x^*}S_{x'} =0.
	\]
Indeed,	it follows from the monotonicity of $R$ that ${x}\neq{x'}$, so  all terms in the above sum vanish. 
	
	Now, assume that $f=f'$. Using the fact that $R$ is both monotone and target bijective, we compute that
\begin{align}
		R_C^*(S_{f^*}) R_C^*(S_f)&=\sum_{x\in R^{-1}(f)}\ \sum_{x'\in R^{-1}(f')}S_{x^*}S_{x'}
		=\sum_{x\in R^{-1}(f)}S_{x^*}S_{x}\nonumber\\
		&=\sum_{x\in R^{-1}(f)}S_{t_{PE}(x)}
		=\sum_{u\in R^{-1}(t_F(f))}S_u
		=R_C^*(S_{t_F(f)}),
\end{align}
as needed.
\end{proof}

Combining Lemma~\ref{lem:RC} with a functoriality argument as in Theorem~\ref{thm:functor.PRG}, we can now claim the main result of this section:
\begin{theorem}\label{thm:functor.CRG}
The association of Cohn path algebras to graphs and algebra homomorphisms $R_C^*$ to relation morphisms $R$ 
defines a contravariant functor from $\mathsf{CRG}$ to $k$-$\mathsf{Alg}$.
\end{theorem}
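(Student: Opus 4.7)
The plan is to invoke Lemma~\ref{lem:RC} to obtain, for each $R\in\mathsf{CRG}(E,F)$, a well-defined algebra homomorphism $R_C^*\colon C_k(F)\to C_k(E)$, and then to verify the two functoriality axioms: (a) $(\Id_E)_C^*=\Id_{C_k(E)}$, which is immediate since each generator is sent to itself, and (b) $(S\circ R)_C^*=R_C^*\circ S_C^*$ for composable morphisms $R\in\mathsf{CRG}(E,F)$ and $S\in\mathsf{CRG}(F,G)$. Both sides of (b) are algebra homomorphisms, so it suffices to test them on the generating set $\{S_w,\,S_{g^*}\mid w\in G^0\cup G^1,\,g\in G^1\}$ of $C_k(G)$. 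On the generators $S_w$, the defining formula for $R_C^*$ agrees verbatim with that of $R_P^*$ from Lemma~\ref{lem:RP}, and since $\mathsf{CRG}$ is a subcategory of $\mathsf{PRG}$, the equality is already supplied by the calculation in Theorem~\ref{thm:functor.PRG}.

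The new content is thus the identity on the ghost generators $S_{g^*}$. I would expand
\[
R_C^*\big(S_C^*(S_{g^*})\big)=\sum_{y\in S^{-1}(g)} R_C^*(S_{y^*}),
\]
and split the sum according to whether $y$ is a vertex (in which case $S_{y^*}=S_y$ and one is back in the vertex case already handled) or a positive-length path $y=f_1\cdots f_n$. In the latter case, multiplicativity gives $R_C^*(S_{y^*})=R_C^*(S_{f_n^*})\cdots R_C^*(S_{f_1^*})$, which by Lemma~\ref{lem:RC} expands as a sum of products $S_{x_n^*}\cdots S_{x_1^*}$ indexed by tuples $(x_1,\dots,x_n)$ with $(x_i,f_i)\in R$. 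Relation~(E2) in $C_k(E)$ forces each such product to equal $S_{(x_1\cdots x_n)^*}$ when the concatenation $x_1\cdots x_n$ is a legitimate path, and to vanish otherwise; crucially, no~(CK1) cancellation occurs among these pure ghost terms.

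Next, using the decomposability and target-injectivity of $R$ (the same arguments that underlie Lemma~\ref{lem:decomposition of composition}), every $x\in R^{-1}(y)$ admits a \emph{unique} factorization $x=x_1\cdots x_n$ with $(x_i,f_i)\in R$. Hence the surviving terms are in bijection with $R^{-1}(y)$, giving $R_C^*(S_{y^*})=\sum_{x\in R^{-1}(y)} S_{x^*}$. Summing over $y\in S^{-1}(g)$ and invoking Lemma~\ref{decomposition}\ref{decompostion.edges} to collapse the resulting double sum yields
\[
R_C^*\big(S_C^*(S_{g^*})\big)=\sum_{x\in (S\circ R)^{-1}(g)} S_{x^*}=(S\circ R)_C^*(S_{g^*}),
\]
completing the verification of (b).

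The main obstacle is the ghost-path bookkeeping in the middle step: one must carefully expand the product $R_C^*(S_{f_n^*})\cdots R_C^*(S_{f_1^*})$ in the Cohn algebra, recognize via~(E2) exactly which mixed terms survive, and match them without overcounting to the elements of $R^{-1}(y)$. It is precisely the uniqueness half of Lemma~\ref{lem:decomposition of composition}, underwritten by target-injectivity plus decomposability of $R$, that prevents double counting and makes the bijection work; had $R$ failed to be target-injective, the expansion would produce repeated summands and the identity would break, which is also what distinguishes $\mathsf{CRG}$ from the merely multiplicative/decomposable setting.
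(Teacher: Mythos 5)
Your proposal is correct and follows essentially the same route as the paper: the paper's proof consists precisely of citing Lemma~\ref{lem:RC} for well-definedness and then repeating the composition argument of Theorem~\ref{thm:functor.PRG}. Your explicit treatment of the ghost generators $S_{g^*}$ --- expanding the product of the $R_C^*(S_{f_i^*})$, noting that only (V)/(E2) and never (CK1) intervene among pure ghost terms, and matching the surviving tuples bijectively with $R^{-1}(y)$ via decomposability plus target injectivity --- is exactly the mirror image of the edge computation in Theorem~\ref{thm:functor.PRG} that the paper leaves implicit.
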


As the Toeplitz graph C*-algebras are the universal enveloping C*-algebras of the Cohn path *-algebras, we  immediately obtain:
\begin{corollary}
The association of Toeplitz graph C*-algebras to graphs and *-ho\-mo\-mor\-phisms $R_C^*$ to relation morphisms $R$ induces a contravariant functor 
from $\mathsf{CRG}$ to \textup{C*-}$\mathsf{Alg}$.
\end{corollary}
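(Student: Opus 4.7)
The plan is to deduce this corollary from Theorem~\ref{thm:functor.CRG} together with the universal property of the enveloping C*-algebra (Definition~\ref{df:gCs} and Remark~\ref{rem:gCs}). Specializing to $k=\mathbb{C}$, the Cohn path algebras $C_{\mathbb{C}}(E)$ and $C_{\mathbb{C}}(F)$ are *-algebras, and the first step is to verify that the algebra homomorphism $R_C^*\colon C_{\mathbb{C}}(F)\to C_{\mathbb{C}}(E)$ from Lemma~\ref{lem:RC} respects the involution. This reduces to a check on generators: since $S_v^*=S_v$ and $R^{-1}(v)\subseteq E^0$ consists of self-adjoint elements, $R_C^*(S_v)^*=R_C^*(S_v)$, while for $f\in F^1$ one has
\[
R_C^*(S_f)^*=\Bigl(\sum_{x\in R^{-1}(f)}S_x\Bigr)^*=\sum_{x\in R^{-1}(f)}S_{x^*}=R_C^*(S_{f^*}),
\]
matching the second defining formula. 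Hence $R_C^*$ is a *-homomorphism.

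Next, I would compose $R_C^*$ with the canonical *-homomorphism $C_{\mathbb{C}}(E)\hookrightarrow\mathcal{T}(E)$ to obtain a *-homomorphism from $C_{\mathbb{C}}(F)$ into the C*-algebra $\mathcal{T}(E)$. By the universal property of the enveloping C*-algebra (the generators of $C_{\mathbb{C}}(F)$ being partial isometries, whose images stay bounded in norm), this extends uniquely to a *-homomorphism $\widetilde{R_C^*}\colon \mathcal{T}(F)\to \mathcal{T}(E)$ of C*-algebras. This is the *-homomorphism assigned to the relation morphism $R$.

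For functoriality, let $R\in\mathsf{CRG}(E,F)$ and $S\in\mathsf{CRG}(F,G)$. Theorem~\ref{thm:functor.CRG} gives $(S\circ R)_C^*=R_C^*\circ S_C^*$ as maps $C_{\mathbb{C}}(G)\to C_{\mathbb{C}}(E)$. Both extensions $\widetilde{(S\circ R)_C^*}$ and $\widetilde{R_C^*}\circ\widetilde{S_C^*}$ are *-homomorphisms $\mathcal{T}(G)\to\mathcal{T}(E)$ agreeing on the dense *-subalgebra $C_{\mathbb{C}}(G)$, so uniqueness in the universal property forces them to coincide; similarly $\widetilde{(\mathrm{Id}_E)_C^*}=\mathrm{Id}_{\mathcal{T}(E)}$.

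There is no serious obstacle here — the whole argument is a formal transfer through the universal construction. The one point that requires a line of care is the *-compatibility check above; everything else is automatic once that is in place, and the existence of the enveloping C*-algebra is guaranteed in advance by Definition~\ref{df:gCs}.
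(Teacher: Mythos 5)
Your argument is correct and is exactly the route the paper takes: the paper's entire justification is the one-line remark that $\mathcal{T}(E)$ is the universal enveloping C*-algebra of the Cohn path *-algebra $C_{\mathbb{C}}(E)$, and you have simply filled in the standard details (the *-compatibility check on generators, the extension via the universal property, and functoriality by agreement on a dense *-subalgebra).
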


\subsection{Subcategories and factorizations}

\begin{proposition}\label{prop:embedding.Cohn}
The embedding of $\mathsf{OG}$ into $\mathsf{RG}$ induces an embedding of $\mathsf{TBPOG}$ into $\mathsf{CRG}$.
The embedding of $\mathsf{PG}^{\mathrm{op}}$ into $\mathsf{RG}$ induces an embedding of $\mathsf{MIPG}^{\mathrm{op}}$ into $\mathsf{CRG}$.
\end{proposition}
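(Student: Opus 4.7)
The plan is to reduce the statement to the preceding proposition (embedding into $\mathsf{PRG}$) and then verify the two extra defining conditions of $\mathsf{CRG}$, namely target surjectivity and monotonicity, for the relation morphisms produced by each embedding. The embeddings $\mathsf{OG}\hookrightarrow\mathsf{RG}$ and $\mathsf{PG}^{\mathrm{op}}\hookrightarrow\mathsf{RG}$ are already known to be faithful and identity on objects, so all of the work is on morphisms.

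For the first part, let $\varphi\in\mathsf{TBPOG}(E,F)$. Target-bijectivity subsumes target-injectivity, so the preceding proposition immediately gives $R_\varphi\in\mathsf{PRG}(E,F)$, and it remains to check target surjectivity and monotonicity. Since $\varphi$ is length preserving, $R_\varphi^{-1}(f)=\varphi^{-1}(f)\subseteq E^1$ and $R_\varphi^{-1}(t_F(f))=\varphi^{-1}(t_F(f))\subseteq E^0$ for each $f\in F^1$, so the map $t_{PE}$ between these sets coincides literally with the map in Definition~\ref{conadm}(2); hence target surjectivity of $R_\varphi$ as a relation morphism is the same thing as target surjectivity of $\varphi$ as a graph homomorphism, which holds because $\varphi\in\mathsf{TBPOG}$. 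Monotonicity of $R_\varphi$ is automatic: $G_{R_\varphi}^1$ consists of pairs $(e,\varphi(e))$ with $e\in E^1$, and the relation $e\preceq e'$ among single edges already forces $e=e'$, whence $\varphi(e)=\varphi(e')$.

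For the second part, let $\vartheta\in\mathsf{MIPG}(F,E)$, i.e.\ a morphism from $E$ to $F$ in $\mathsf{MIPG}^{\mathrm{op}}$. The preceding proposition gives $R^\vartheta\in\mathsf{PRG}(E,F)$ from injectivity of $\vartheta$ on vertices. Target surjectivity is automatic because $R^\vartheta$ is functional: $(R^\vartheta)^{-1}(f)=\{\vartheta(f)\}$ and $(R^\vartheta)^{-1}(t_F(f))=\{\vartheta(t_F(f))\}=\{t_{PE}(\vartheta(f))\}$ are singletons matched by $t_{PE}$ thanks to target preservation of $\vartheta$. Monotonicity of $R^\vartheta$ follows directly from the monotonicity clause in Definition~\ref{covadm}: since $G_{R^\vartheta}^1=\{(\vartheta(f),f)\mid f\in F^1\}$, any chain $\vartheta(f)\preceq\vartheta(f')$ with $f,f'\in F^1$ forces $f=f'$, so the two pairs in $G_{R^\vartheta}^1$ coincide.

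There is no serious obstacle here; the argument is a direct translation of the defining conditions of $\mathsf{TBPOG}$ and $\mathsf{MIPG}$ into the corresponding conditions for $R_\varphi$ and $R^\vartheta$. The only bookkeeping subtlety to watch is not to conflate the relation-theoretic notions of target surjectivity and monotonicity (statements about $G_R^1$ and relation-preimages, introduced in Section~\ref{sec:5}) with the graph- or path-homomorphism versions in Definitions~\ref{conadm} and~\ref{covadm}; once the match on edges is spelled out, each side reduces to a one-line verification.
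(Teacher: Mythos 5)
Your proposal is correct and follows essentially the same route as the paper: for $\varphi\in\mathsf{TBPOG}$ monotonicity of $R_\varphi$ is forced because comparable single edges coincide and target surjectivity of $R_\varphi$ is literally that of $\varphi$, while for $\vartheta\in\mathsf{MIPG}$ target surjectivity of $R^\vartheta$ follows from $t_{PE}(\vartheta(f))=\vartheta(t_F(f))$ and monotonicity of $R^\vartheta$ is a restatement of the monotonicity of $\vartheta$. The explicit reduction to the $\mathsf{PRG}$ embedding proposition for the remaining conditions is exactly what the paper leaves implicit, so nothing is missing.
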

\begin{proof}
Let $\varphi\in\mathsf{TBPOG}(E,F)$. We claim that $R_{\varphi}$ is monotone. To show this, take
$(x,f),(x',f')\in R_{\varphi}$ such that $x\preceq x'$. Then $x,x'\in E^1$, so  $x=x'$. Hence, $f=\varphi(x)=\varphi(x')=f'$. 
Furthermore, the target-surjectivity of $R_{\varphi}$ is easily  seen as equivalent to the target-surjectivity of~$\varphi$.

Next, let $\vartheta\in\mathsf{MIPG}^{\mathrm{op}}(E,F)$. We see that $R^{\vartheta}$ is target surjective because $t_{PE}(\vartheta(f))=\vartheta(t_F(f))$ for all 
$f\in F^1$. Finally, if ${\vartheta}$ is monotone, then $f,f'\in F^1$ and $\vartheta(f)\preceq\vartheta(f')$ imply that $f=f'$. Hence, also
$(\vartheta(f),f)=(\vartheta(f'),f')$, so $R_{\vartheta}$ is monotone.
\end{proof}

\begin{proposition}\label{prop:unique2}
Every $R\in\mathsf{CRG}(E,F)$ admits a factorization
$R=R_\varphi\circ R^\vartheta$, where $\vartheta\in\mathsf{MIPG}(G,E)$ and $\varphi\in\mathsf{TBPOG}(G,F)$. 
This factorization is unique (up to a canonical isomorphism), and given explicitly by the construction in Proposition~\ref{prop:factor}.
\end{proposition}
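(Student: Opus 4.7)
The plan is to promote the factorization already supplied by Proposition~\ref{prop:factunique}. Since $\mathsf{CRG}\subseteq\mathsf{PRG}$, that proposition gives a factorization $R=R_\varphi\circ R^\vartheta$ via the relation graph $G_R$ with $\vartheta\in\mathsf{IPG}(G_R,E)$ and $\varphi\in\mathsf{POG}(G_R,F)$, the latter being target injective. What remains is to verify the two additional membership conditions: that $\vartheta$ is monotone and that $\varphi$ is target surjective. Each will come directly from the corresponding extra hypothesis on $R$ distinguishing $\mathsf{CRG}$ inside $\mathsf{PRG}$.

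For monotonicity of $\vartheta$, take edges $(x,f),(x',f')\in G_R^1$ with $\vartheta(x,f)\preceq\vartheta(x',f')$, i.e.\ $x\preceq x'$. Since $(x,f),(x',f')\in G_R^1\subseteq R$, the monotonicity of $R$ immediately forces $(x,f)=(x',f')$. Combining this with $\vartheta\in\mathsf{IPG}(G_R,E)$ yields $\vartheta\in\mathsf{MIPG}(G_R,E)$.

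For target surjectivity of $\varphi$, fix $f\in F^1$. The preimage $\varphi^{-1}(f)\subseteq G_R^1$ consists of all pairs $(x,f)$ with $x\in R^{-1}(f)$, while $\varphi^{-1}(t_F(f))\subseteq G_R^0$ consists of all pairs $(u,t_F(f))$ with $u\in R^{-1}(t_F(f))$; note that all such $u$ lie in $E^0$ by the defining property $R^{-1}(F^0)\subseteq E^0$ of a relation morphism. The target map on $G_R$ sends $(x,f)$ to $(t_{PE}(x),t_F(f))$, so surjectivity of $t_{PG_R}:\varphi^{-1}(f)\to\varphi^{-1}(t_F(f))$ is exactly the surjectivity of the restriction $t_{PE}:R^{-1}(f)\to R^{-1}(t_F(f))$, which is the target surjectivity of $R$. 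Together with the target injectivity already in hand, this places $\varphi$ in $\mathsf{TBPOG}(G_R,F)$.

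Uniqueness is inherited at no extra cost: any alternative factorization $R=R_{\varphi'}\circ R^{\vartheta'}$ with $\vartheta'\in\mathsf{MIPG}(G',E)$ and $\varphi'\in\mathsf{TBPOG}(G',F)$ satisfies in particular the hypotheses of Proposition~\ref{prop:factunique}, so the canonical morphism $\pi:G'\to G_R$ from Proposition~\ref{prop:universal1} is an isomorphism. I do not foresee any genuine obstacle: the proof essentially dictionarizes the two conditions distinguishing $\mathsf{CRG}$ inside $\mathsf{PRG}$ into the two conditions distinguishing $\mathsf{MIPG}$ inside $\mathsf{IPG}$ and $\mathsf{TBPOG}$ inside $\mathsf{POG}$, with all structural work already done by Proposition~\ref{prop:factunique}.
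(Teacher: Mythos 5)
Your proposal is correct and follows essentially the same route as the paper: it invokes Proposition~\ref{prop:factunique} for the $\mathsf{IPG}$/target-injective-$\mathsf{POG}$ factorization and its uniqueness, then translates the monotonicity of $R$ into monotonicity of $\vartheta$ and the target surjectivity of $R$ into target surjectivity of $\varphi$ via the explicit description of $\varphi^{-1}(f)$ and $\varphi^{-1}(t_F(f))$. This is exactly the argument in the paper.
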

\begin{proof}
It follows from Proposition~\ref{prop:factunique}  that the factorization  $R=R_\varphi\circ R^\vartheta$ in Proposition~\ref{prop:factor}
is unique with $\vartheta\in\mathsf{IPG}(G_R,E)$ 
and target-injective $\varphi\in\mathsf{POG}(G_R,F)$. It remains to show that 
$\varphi$ 
is also target surjective and that $\vartheta$ is also monotone.
To this end, take
$f\in F^1$, and note that $\varphi^{-1}(t_F(f))=\{(u,t_F(f))\in R\mid u\in E^0\}$ and $\varphi^{-1}(f)=\{(x,f)\in R\mid x\in  \mathrm{FP}(E)\}$. 
Now, the map
\[
\varphi^{-1}(f)\ni(x,f)\longmapsto (t_{PE}(x),t_F(f))\in \varphi^{-1}(t_F(f))
\]
is bijective because $R$ is target bijective. Therefore, also $\varphi$ is target bijective.
Finally,
by the monotonicity of $R$, if $(x,f),(x',f')\in G_R^1$ are such that $\vartheta(x,f)\preceq\vartheta(x',f')$, then $(x,f)=(x',f')$. Hence, $\vartheta$ is monotone.
\end{proof}

Note that, as  in  previous sections, Example~\ref{ex:counter} also shows that not every $R\in\mathsf{CRG}(E,F)$ has a pushout-type decomposition.

\section{Leavitt path algebras and graph C*-algebras}\label{sec:6}
\noindent
Finally, we determine a subcategory of $\mathsf{CRG}$ admitting a functor to $k$-$\mathsf{Alg}$ by assigning Leavitt path algebras to graphs and algebra 
homomorphisms to relation morphisms. This subcategory will contain both $\mathsf{RMIPG}^{\mathrm{op}}$ and $\mathsf{CRTBPOG}$.
As in the preceding section, completing Leavitt path algebras to  graph C*-algebras
automatically yields a functor to the category C*-$\mathsf{Alg}$ of C*-algebras and *-homomorphisms.

\subsection{The category $\mathsf{ARG}$ of admissible relation morphisms of graphs and the Leavitt functor}

The final condition to arrive at our main-destination category is:
\begin{definition}
Given a relation morphism $R:E\to F$, we say that it is \emph{regular}
whenever $v\in\reg(F)$, $u\in R^{-1}(v)$ and $x\in s_{PE}^{-1}(u)$
imply that there is $(x',f)\in G_R^1$ such that $s_F(f)=v$ and $x\sim x'$.
\end{definition}
For starters, we need the following technical result:
\begin{lemma}\label{continue}
Assume that $R\in\mathsf{CRG}(E,F)$ is regular, $f\in s_F^{-1}(\reg(F))$ and $x,x'\in  \mathrm{FP}(E)$. 
Then  $(x,f)\in R$ and $x'\prec x$ imply that $t_{PE}(x')\in \reg(E)$.
\end{lemma}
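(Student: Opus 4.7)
Write $x = x' x''$ with $|x''| \geq 1$, and set $u := t_{PE}(x') = s_{PE}(x'')$. The first edge of $x''$ has source $u$, so $u$ is not a sink, and the remaining task is to show that $u$ is not an infinite emitter.

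Set $a := s_F(f) \in \reg(F)$ and $w := s_{PE}(x) = s_{PE}(x')$. Source preservation of $R$ applied to $(x,f) \in R$ gives $(w,a) \in R$, so $w \in R^{-1}(a)$. Suppose for contradiction that $s_E^{-1}(u)$ is infinite. For each $e \in s_E^{-1}(u)$, the path $p_e := x' e$ starts at $w$, and applying the regularity of $R$ to the triple $(a, w, p_e)$ yields $(q_e, h_e) \in G_R^1$ with $s_F(h_e) = a$ and $q_e \sim p_e$.

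The crucial observation is that $q_e \not\preceq x'$. Indeed, if $q_e \preceq x'$, then the hypothesis $x' \prec x$ gives $q_e \prec x$, so $q_e$ and $x$ are comparable; since $(q_e, h_e)$ and $(x, f)$ both lie in $G_R^1$, the monotonicity of $R$ forces $(q_e, h_e) = (x,f)$, contradicting $q_e \neq x$. The initial subpaths of $x' e$ are precisely those of $x'$ together with $x' e$ itself, so the comparability $q_e \sim p_e$ combined with $q_e \not\preceq x'$ forces $x' e \preceq q_e$; in other words, every $q_e$ begins with the prefix $x' e$.

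Consequently, for distinct edges $e, e' \in s_E^{-1}(u)$ the paths $q_e$ and $q_{e'}$ disagree at their $(|x'|+1)$-st edge and are therefore distinct. Since $a \in \reg(F)$, the set $s_F^{-1}(a)$ is finite, so pigeonhole produces a single edge $h \in s_F^{-1}(a)$ with $h_e = h$ for infinitely many $e$; the corresponding $q_e$'s form infinitely many distinct elements of $R^{-1}(h)$, contradicting the properness of $R$. I expect the main subtlety to be the interplay between regularity (which only asserts the existence of \emph{some} witness $(q_e, h_e)$) and monotonicity (which rigidly pins a comparable witness to $(x,f)$): ruling out the degenerate case $q_e \preceq x'$ is what prevents infinitely many edges $e$ from sharing a single common witness and is therefore the linchpin of the argument.
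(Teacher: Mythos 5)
Your proof is correct and follows essentially the same route as the paper's: apply regularity to the paths $x'e$ for $e\in s_E^{-1}(u)$, use monotonicity against $(x,f)$ to force each witness to extend $x'e$, and use properness together with the finiteness of $s_F^{-1}(s_F(f))$ to rule out infinitely many such witnesses. The paper merely packages the same two contradictions in the opposite order (properness first, monotonicity last), so the arguments are interchangeable.
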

\begin{proof}
Denote $v:=s_F(f)$ and $u:=t_{PE}(x')$.
Since $x'\prec x$, we know that
$u$ is not a sink. Suppose now that $u$ is an infinite emitter. Then, as $R$ is proper and regular, for some $e\in s_E^{-1}(u)$, there are 
$\ol{f}\in s_F^{-1}(v)$ and $\ol{x}\in  \mathrm{FP}(E)$ such that $(\ol{x},\ol{f})\in R$ and $\ol{x}\prec x'e$. Hence, $\ol{x}\preceq x'\prec x$, so
$\ol{x}\neq x$. However, this contradicts the monotonicity of $R$ which implies that $f=\ol{f}$ and $x=\ol{x}$.
\end{proof}

Given a graph $E$, let $I_E$ be the ideal in $C_k(E)$ generated by the set~\eqref{en:Leavitt}. Our key lemma is:
\begin{lemma}\label{regularity}
Let $R\in\mathsf{CRG}(E,F)$. Then the map $R_C^*:C_k(F)\to C_k(E)$
satisfies $R_C^*(I_F)\subseteq I_E$ if and only if $R$ is regular.
\end{lemma}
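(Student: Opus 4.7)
The plan is to test $R_C^*$ on the canonical generators $g_v := \sum_{f\in s_F^{-1}(v)} S_f S_{f^*} - S_v$ of $I_F$ indexed by $v\in\reg(F)$, and decide when the image lies in $I_E$. Since $R_C^*$ is an algebra homomorphism (Lemma~\ref{lem:RC}),
$$
R_C^*(g_v) = \sum_{f\in s_F^{-1}(v)} \sum_{x,x'\in R^{-1}(f)} S_x S_{x'^*} \;-\; \sum_{u\in R^{-1}(v)} S_u.
$$
The off-diagonal terms with $x\neq x'$ vanish identically in $C_k(E)$: target injectivity of $R$ forces $t_{PE}(x)\neq t_{PE}(x')$, hence $S_x S_{x'^*}=S_x S_{t_{PE}(x)}S_{t_{PE}(x')}S_{x'^*}=0$. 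Setting
$$
\mathcal{P}(u,v) := \{x\in  \mathrm{FP}(E) \mid s_{PE}(x)=u,\ \exists\, f\in s_F^{-1}(v):(x,f)\in G_R^1\}
$$
and regrouping the surviving diagonal terms by source via vertex-disjointness of $R$, I obtain the clean formula
$$
R_C^*(g_v) = \sum_{u\in R^{-1}(v)}\Big( \sum_{x\in\mathcal{P}(u,v)} S_x S_{x^*} \;-\; S_u \Big).
$$

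For the \emph{if} direction, assume $R$ is regular. The heart of the matter is the congruence
$$
S_u \;\equiv\; \sum_{x\in\mathcal{P}(u,v)} S_x S_{x^*} \pmod{I_E} \qquad (v\in\reg(F),\ u\in R^{-1}(v)),
$$
which I would prove by induction on the maximum length occurring in the finite antichain $\mathcal{P}(u,v)$ (finiteness follows from properness of $R$ together with finiteness of $s_F^{-1}(v)$). Regularity of $R$ at $x=u$ gives $\mathcal{P}(u,v)\neq\emptyset$, and applied at each $e\in s_E^{-1}(u)$ together with monotonicity of $R$ forces $e$ to either belong to $\mathcal{P}(u,v)$ or be a strict prefix of some longer element of $\mathcal{P}(u,v)$; in particular $u\in\reg(E)$, and $s_E^{-1}(u)$ equals exactly the set of first edges of elements of $\mathcal{P}(u,v)$. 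Lemma~\ref{continue} propagates this regularity down to every internal vertex of the finite tree of prefixes of $\mathcal{P}(u,v)$, and the same argument applies at each such vertex. Applying the Leavitt relation at $u$ and expanding recursively along each sub-tree yields the identity. Summing over $u\in R^{-1}(v)$ gives $R_C^*(g_v)\in I_E$ for all $v\in\reg(F)$, hence $R_C^*(I_F)\subseteq I_E$.

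For the \emph{only if} direction, suppose $R$ is not regular and pick witnesses $v\in\reg(F)$, $u\in R^{-1}(v)$ and $x\in s_{PE}^{-1}(u)$ such that $x$ is incomparable to every element of $\mathcal{P}(u,v)$. Paths in $\mathcal{P}(u',v)$ with $u'\neq u$ start at $u'\neq s_{PE}(x)$ and are automatically incomparable to $x$. The standard product rule in $C_k(E)$ (repeated use of (CK1)) yields $S_{x^*}S_{x'}=0$ whenever two paths $x,x'$ are incomparable, so
$$
S_{x^*}\, R_C^*(g_v)\, S_x \;=\; 0 \;-\; S_{x^*}S_u S_x \;=\; -\,S_{t_{PE}(x)}.
$$
Since vertex projections are nonzero in $L_k(E)$, this forces $R_C^*(g_v)\notin I_E$, and hence $R_C^*(I_F)\not\subseteq I_E$.

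The main obstacle is the inductive claim in the \emph{if} direction. Two conditions must be checked simultaneously at every internal vertex of the finite tree of prefixes of $\mathcal{P}(u,v)$: that the vertex is regular in $E$ (so that the Leavitt relation may be applied there, which is handled by Lemma~\ref{continue}), and that its emitted edges coincide exactly with the first edges of the continuations that still reach $\mathcal{P}(u,v)$ (so that the Leavitt expansion contains no stray term). The latter combines regularity of $R$ (to rule out an emitted edge escaping the tree) with monotonicity of $R$ (to exclude the pathological case in which a proper prefix of an element of $\mathcal{P}(u,v)$ would itself coincide with another element, which would violate the antichain structure forced by monotonicity).
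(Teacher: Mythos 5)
Your proof is correct and takes essentially the same route as the paper: the $(\Leftarrow)$ direction is the same prefix-tree expansion of (CK2) along $R^{-1}(s_F^{-1}(v))$, using Lemma~\ref{continue} for regularity of the internal vertices and monotonicity for the antichain/leaf structure, while the $(\Rightarrow)$ direction is the same test of the image of the Cuntz--Krieger element against $S_x$ for a witness path $x$ (you conjugate by $S_{x^*}(\,\cdot\,)S_x$ where the paper right-multiplies by $S_{\ol{x}}$, but the mechanism --- incomparability kills all $S_{x'}S_{x'^*}$ terms, leaving a nonzero vertex projection --- is identical). The only cosmetic differences are your explicit per-vertex grouping into the sets $\mathcal{P}(u,v)$ and your observation that the off-diagonal terms $S_xS_{x'^*}$ vanish by target injectivity, both of which are implicit in the paper's computation.
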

\begin{proof}
    $(\Rightarrow)$ Assume that $R_C^*(I_F)\subseteq I_E$. This implies that $R_C^*$  descends to the quotient homomorphism $R_L^*:L_k(F)\to L_k(E)$. 
   Next, let $v\in \reg(F)$, $\ol{u}\in E^0$, $\ol{x}\in  \mathrm{FP}(E)$ be such that $(\ol{u},v)\in R$ and $s_{PE}(\ol{x})=\ol{u}$. On the one hand,
    \[
    R_L^*(S_v)=\sum_{u\in R^{-1}(v)}S_u.
    \]
    On the other hand
    \[
    R_L^*(S_v)=R_L^*\left(\sum_{f\in s_F^{-1}(v)}S_fS_{f^*}\right)=\sum_{f\in s_F^{-1}(v)}\ \sum_{x\in R^{-1}(f)}S_xS_{x^*}.
    \]
    Now, since $R_L^*(S_v)S_{\ol{x}}=S_{\ol{u}}S_{\ol{x}}=S_{\ol{x}}\neq 0$, there must exist $f\in s_F^{-1}(v)$ and 
    $x\in  \mathrm{FP}(E)$ such that $S_xS_{x^*}S_{\ol{x}}\neq 0$. This implies that $x\sim \ol{x}$, so $R$ is regular.
    
    $(\Leftarrow)$ Assume that $R$ is regular. Proving that $R_C^*(I_F)\subseteq I_E$ is equivalent to proving that
    \begin{equation}\label{CK2.part1}
        \sum_{u\in R^{-1}(v)}S_u=\sum_{f\in s^{-1}(v)}\ \sum_{x\in R^{-1}(f)}S_xS_{x^*}
    \end{equation}
    holds in $L_k(E)$ for every $v\in \reg(F)$. To prove the latter, take $v\in\reg(F)$. From now on, our formulas are assumed to be in Leavitt path algebras. 
    Note first that, if $x\in E^0$, then the summand 
    $S_xS_{x^*}$ is equal to $S_x$. Therefore, we can  cancel in \eqref{CK2.part1} all terms of the form $S_u$, 
    where $u\in E^0$ is such that $(u,v)\in R$ and $(u,f)\in R$ 
    for some $f\in s_F^{-1}(v)$. Now, if $S_u$ in \eqref{CK2.part1} is not canceled, then $u$ is a regular vertex due to
    the regularity of $R$ and Lemma~\ref{continue}. Hence, for every not-canceled $S_u$, we can write
    \begin{equation}\label{CK2.part2}
        S_u=\sum_{e_1\in s_E^{-1}(u)}S_{e_1}S_{e_1^*}.
    \end{equation}
    Using again the regularity of $R$ and Lemma~\ref{continue}, we infer that, if $t_E(e_1)$ is not regular, then there exists $f\in s_F^{-1}(v)$ such that
     $(e_1,f)\in R$. Next, using the right-hand side of~\eqref{CK2.part1}, we cancel all $S_{e_1}S_{e_1^*}$ from \eqref{CK2.part2} substituted to \eqref{CK2.part1}
     for all $e_1$ such that $(e_1,f)\in R$ for some 
     $f\in s_F^{-1}(v)$. Arguing as above, if $S_{e_1}S_{e_1^*}$  remains, then $t_E(e_1)\in \reg(F)$, so we can write
    \begin{equation}\label{CK2.part3}
        S_{e_1}S_{e_1^*}=\sum_{e_2\in s_E^{-1}(t_E(e_1))}S_{e_1e_2}S_{(e_1e_2)^*}.
    \end{equation}
    We continue the process of cancellation
    until we get paths of length $n$, where 
    \[
    n:=\max\{|x|\;|\;(x,f)\in R\text{ for some }f\in s_F^{-1}(v)\}.
    \]

    It remains to show that this process cancels all summands in~\eqref{CK2.part1}. At every step $0\leq k\leq n$ of the cancellation process,
     if the term $S_{e_1\cdots e_k}S_{(e_1\cdots e_k)^*}$ survives (when $k=0$, we mean the projections $S_u$), 
    by the 
    regularity of $R$, there exist $f\in s_F^{-1}(v)$ and $x\in  \mathrm{FP}(E)$ such that $(x,f)\in R$ and $e_1\cdots e_k\prec x$. This implies that by expanding the 
    left-hand side of \eqref{CK2.part1} using (CK2), at the step $n$, all elements of these expansions will be canceled. Finally, to show that that no term remains 
    on the right-hand side of \eqref{CK2.part1}, take $x\in  \mathrm{FP}(E)$ such that $(x,f)\in R$ for some $f\in s_F^{-1}(v)$. By the monotonicity of $R$, no
    initial sub-path of $x$ can be of the form
$e_1\cdots e_k$, where $k<|x|$. Now, by Lemma~\ref{continue}, the end vertex of the $k$-th egde of $x$ is regular for all $0\leq k<|x|$. 
This implies that $S_xS_{x^*}$ appears in the step $|x|$ of the cancellation process,
   so it is removed at this step.
\end{proof}

The composability of the regularity condition is an easy consequence of the above lemma.
\begin{corollary}\label{lem:composition.Reg}
Let $R\in\mathsf{CRG}(E,F)$ and $S\in\mathsf{CRG}(F,G)$. Then, if $R$ and $S$ are regular, so is~$S\circ R$.
\end{corollary}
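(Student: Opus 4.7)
The plan is to exploit Lemma \ref{regularity} to convert the combinatorial statement about regularity into an algebraic statement about ideals, and then use functoriality of $R \mapsto R_C^*$ established in Theorem \ref{thm:functor.CRG}. The observation driving the proof is that regularity has the clean algebraic reformulation $R_C^*(I_F) \subseteq I_E$, and ideal containments of this form automatically compose.

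Concretely, I would first note that $S \circ R$ lies in $\mathsf{CRG}(E,G)$: this is precisely what Lemma \ref{comp1} together with Lemma \ref{lem:target injective} (which give closure of $\mathsf{PRG}$ under composition) and Lemma \ref{lem:composition-TS-M} (target surjectivity and monotonicity) guarantee. Thus Lemma \ref{regularity} is applicable to $S \circ R$, and to prove that $S \circ R$ is regular it suffices to verify $(S \circ R)_C^*(I_G) \subseteq I_E$.

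Now I would invoke the contravariant functoriality provided by Theorem \ref{thm:functor.CRG} to write $(S \circ R)_C^* = R_C^* \circ S_C^*$. Applying Lemma \ref{regularity} to $S$ gives $S_C^*(I_G) \subseteq I_F$, and applying it to $R$ gives $R_C^*(I_F) \subseteq I_E$. Chaining these two inclusions yields
\[
(S \circ R)_C^*(I_G) \;=\; R_C^*\bigl(S_C^*(I_G)\bigr) \;\subseteq\; R_C^*(I_F) \;\subseteq\; I_E.
\]
One more application of Lemma \ref{regularity}, in the reverse direction, then delivers the regularity of $S \circ R$.

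There is no substantial obstacle here, since the entire content of the proof has been packaged into Lemma \ref{regularity}; the only thing one must be careful about is justifying $S \circ R \in \mathsf{CRG}$ before invoking the lemma, so that the hypothesis of Lemma \ref{regularity} is legitimately met. This is exactly the role of the preceding composability results, and it is why the corollary is genuinely a corollary rather than requiring a direct combinatorial verification.
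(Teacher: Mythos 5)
Your proof is correct and is essentially identical to the paper's: both convert regularity into the ideal containment of Lemma~\ref{regularity}, use functoriality to write $(S\circ R)_C^*=R_C^*\circ S_C^*$, chain the inclusions, and apply Lemma~\ref{regularity} in reverse. Your extra remark about first checking $S\circ R\in\mathsf{CRG}$ is a sensible precaution the paper leaves implicit.
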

\begin{proof}
By Lemma~\ref{regularity}, $R_C^*(I_F)\subseteq I_E$ and $S_C^*(I_G)\subseteq I_F$. Hence,
\[
(S\circ R)_C^*(I_G)=R_C^*(S^*(I_G))\subseteq R_C^*(I_F)\subseteq I_E.
\]
Applying now Lemma~\ref{regularity} in the opposite direction, we conclude that $S\circ R$ is regular.
\end{proof}

We put all above conditions on relation morphisms into a single definition to obtain a key concept of our paper:
	\begin{definition}
	A relation morphism $R:E\to F$ is said to be \emph{admissible} if:
	\begin{enumerate}
		\item it is multiplicative, that is, $(x,y),(x',y')\in R$, $s_{PE}(x')=t_{PE}(x)$ and $s_{PF}(y')=t_{PF}(y)$ imply that $(xx',yy')\in R$;
		\item it is decomposable, that is, $y,y'\in  \mathrm{FP}(F)$, $s_{PF}(y')=t_{PF}(y)$ and $(\ol{x},yy')\in R$ imply that 
		$\exists\;x\in R^{-1}(y)$, $x'\in R^{-1}(y'):xx'=\ol{x}$;
		\item it is proper, that is, $|R^{-1}(y)|<\infty$ for all $y\in  \mathrm{FP}(F)$;
		\item it is vertex disjoint, that is, $v,\,v'\in F^0$ and $v\neq v'$ imply $R^{-1}(v)\cap R^{-1}(v')=\emptyset$;
		\item it is target bijective, that is, the restriction $t_{PE}:R^{-1}(f)\to R^{-1}(t_F(f))$ is bijective for all $f\in F^1$;
		\item it is monotone, that is, $(x,f),(x',f')\in G_R^1$ and $x\preceq x'$ imply that $(x,f)=(x',f')$;
		\item it is regular, that is, $v\in\reg(F)$, $u\in R^{-1}(v)$ and $x\in s_{PE}^{-1}(u)$
		imply that there is $(x',f)\in G_R^1$ such that $s_F(f)=v$ and $x\sim x'$.
	\end{enumerate}
\end{definition}

As the identity morphisms are regular,  Corollary~\ref{lem:composition.Reg} implies the following crucial result:
\begin{theorem}
Admissible relation morphisms define a subcategory of $\mathsf{RG}$ whose objects are all directed graphs. (We denote this subcategory by $\mathsf{ARG}$.) 
\end{theorem}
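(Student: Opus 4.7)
The plan is essentially to assemble the composition lemmas established earlier in the paper and check that the identity morphism satisfies all seven defining conditions of admissibility. Since admissibility is defined by a conjunction of properties, it suffices to verify the closure of each property under composition separately.

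First I would handle the identity morphism $\Id_E = \{(x,x)\mid x\in \mathrm{FP}(E)\}$. All seven conditions—multiplicativity, decomposability, properness, vertex disjointness, target bijectivity, monotonicity, and regularity—are immediate for the diagonal relation: for any $y \in \mathrm{FP}(E)$ we have $\Id_E^{-1}(y) = \{y\}$, which at once gives properness, vertex disjointness, target bijectivity and monotonicity, while multiplicativity and decomposability reduce to the associativity of concatenation, and regularity holds because we can take $x' = x$.

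Next, I would verify that $\mathsf{ARG}$ is closed under composition by invoking the appropriate results already proved in the paper. Let $R \in \mathsf{ARG}(E,F)$ and $S \in \mathsf{ARG}(F,G)$. Since both $R$ and $S$ are decomposable, proper, vertex disjoint, and multiplicative with $R$ vertex disjoint, Lemma~\ref{comp1} yields that $S \circ R$ is decomposable, proper, vertex disjoint, and multiplicative. Since both are target injective (being target bijective) and vertex disjoint with $R$ decomposable, Lemma~\ref{lem:target injective} gives target injectivity of $S \circ R$. At this point $S \circ R$ lies in $\mathsf{PRG}$, so we can apply Lemma~\ref{lem:composition-TS-M}: part~(1) gives target surjectivity of $S \circ R$ (which together with target injectivity produces target bijectivity), and part~(2) gives monotonicity. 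Finally, Corollary~\ref{lem:composition.Reg} provides the regularity of $S \circ R$.

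Putting these together, $S \circ R$ satisfies all seven conditions of admissibility, so it belongs to $\mathsf{ARG}(E,G)$. Associativity of composition is inherited from $\mathsf{RG}$. Thus $\mathsf{ARG}$ is a subcategory of $\mathsf{RG}$ with the same objects. There is no real obstacle here beyond bookkeeping: every property has already been shown to compose individually in the preceding lemmas, so the proof is a short assembly. The only mildly delicate point is that the target-bijectivity clause requires combining two separate lemmas (one for injectivity, one for surjectivity), and one must first know $S \circ R$ is already in $\mathsf{PRG}$ before invoking Lemma~\ref{lem:composition-TS-M}; but the order of applications above handles this without circularity.
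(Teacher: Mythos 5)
Your proof is correct and follows essentially the same route as the paper: the theorem is obtained there by assembling the composition lemmas (Lemma~\ref{comp1}, Lemma~\ref{lem:target injective}, Lemma~\ref{lem:composition-TS-M}, and Corollary~\ref{lem:composition.Reg}) together with the observation that identity morphisms satisfy all the conditions, exactly as you do. Your remark that target bijectivity must be reassembled from the separate injectivity and surjectivity lemmas, and that $S\circ R$ must first be known to lie in $\mathsf{PRG}$ before invoking Lemma~\ref{lem:composition-TS-M}, correctly identifies the only points requiring care.
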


We can now claim the main theorem of the paper:
\begin{theorem}\label{main}
The association of Leavitt path algebras to graphs and algebra homomorphisms $R_L^*$ to admissible relation morphisms $R$ defines a contravariant functor 
from $\mathsf{ARG}$ to $k$-$\mathsf{Alg}$.
\end{theorem}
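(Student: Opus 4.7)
My plan is to build the Leavitt functor by descending the already-established Cohn functor to the quotient algebras, using the regularity condition as the exact criterion for this descent to exist. Since $\mathsf{ARG}$ consists of the relation morphisms in $\mathsf{CRG}$ that are additionally regular, and since identities are obviously regular while Corollary~\ref{lem:composition.Reg} ensures that regularity is closed under composition, the inclusion $\mathsf{ARG}\hookrightarrow\mathsf{CRG}$ is a subcategory inclusion. Theorem~\ref{thm:functor.CRG} therefore already provides a contravariant functor $R\mapsto R_C^*$ from $\mathsf{ARG}$ to $k$-$\mathsf{Alg}$; the task is to upgrade it from Cohn path algebras to Leavitt path algebras.

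For each $R\in\mathsf{ARG}(E,F)$, Lemma~\ref{regularity} gives the inclusion $R_C^*(I_F)\subseteq I_E$, where $I_E$ and $I_F$ denote the ideals generated by the CK2-relations in the respective Cohn algebras. Writing $\pi_E\colon C_k(E)\to L_k(E)$ and $\pi_F\colon C_k(F)\to L_k(F)$ for the quotient maps, the universal property of the quotient yields a unique algebra homomorphism $R_L^*\colon L_k(F)\to L_k(E)$ characterized by $R_L^*\circ \pi_F = \pi_E\circ R_C^*$. On generators this gives the expected formulas
\[
R_L^*(S_y) = \sum_{x\in R^{-1}(y)}S_x, \qquad R_L^*(S_{f^*}) = \sum_{x\in R^{-1}(f)}S_{x^*},
\]
for $y\in F^0\cup F^1$ and $f\in F^1$.

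To verify contravariant functoriality, I check identities and composition. From $(\Id_E)_C^* = \Id_{C_k(E)}$ the defining property forces $(\Id_E)_L^* = \Id_{L_k(E)}$. For composition, let $R\in\mathsf{ARG}(E,F)$ and $S\in\mathsf{ARG}(F,G)$. Contravariance of the Cohn functor gives $(S\circ R)_C^* = R_C^*\circ S_C^*$, and chasing quotient maps yields
\[
(S\circ R)_L^*\circ\pi_G \;=\; \pi_E\circ(S\circ R)_C^* \;=\; \pi_E\circ R_C^*\circ S_C^* \;=\; R_L^*\circ \pi_F\circ S_C^* \;=\; R_L^*\circ S_L^*\circ \pi_G.
\]
Since $\pi_G$ is surjective, $(S\circ R)_L^* = R_L^*\circ S_L^*$, completing the contravariance check.

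The entire substance of the theorem is thus concentrated in Lemma~\ref{regularity}, which matches the regularity condition on $R$ with the algebraic requirement that $R_C^*$ respects (CK2). Once that bridge is in place, the remainder of the argument is a purely formal descent through quotients together with the functoriality already proved for $\mathsf{CRG}$. The only conceptual point I expect to double-check is that the expected formulas for $R_L^*$ on generators are literally the ones inherited from $R_C^*$ via $\pi_F$; but this is automatic since $\pi_F(S_y) = S_y$ and $\pi_F(S_{f^*})=S_{f^*}$ under the standard convention of using the same symbols across Cohn and Leavitt algebras.
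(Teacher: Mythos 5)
Your proposal is correct and follows essentially the same route as the paper: the descent of $R_C^*$ to $R_L^*$ via Lemma~\ref{regularity} and the inheritance of functoriality from Theorem~\ref{thm:functor.CRG}. You merely spell out the quotient-chasing details that the paper leaves implicit.
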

\begin{proof}
 The fact that $R_C^*:C_k(F)\to C_k(E)$ descends to the quotient algebra homomorphism $R_L^*:L_k(F)\to L_k(E)$ follows from Lemma~\ref{regularity}. 
 The functoriality of this 
 association then follows from Theorem~\ref{thm:functor.CRG}.
\end{proof}

As the  graph C*-algebras are the universal enveloping C*-algebras of the Leavitt path *-algebras, we  immediately obtain:
\begin{corollary}
The association of  graph C*-algebras to graphs and *-homomorphisms $R_L^*$ to admissible relation morphisms $R$ induces a contravariant functor 
from $\mathsf{ARG}$ to \textup{C*-}$\mathsf{Alg}$.
\end{corollary}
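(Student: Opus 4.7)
The plan is to reduce to Theorem~\ref{main} together with the universal property built into Definition~\ref{df:gCs}. Throughout, we specialize to $k=\mathbb{C}$, since graph C*-algebras live on the $\mathbb{C}$-linear Leavitt path algebras.

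First, I would verify that for an admissible $R:E\to F$, the algebra homomorphism $R_L^*:L_{\mathbb{C}}(F)\to L_{\mathbb{C}}(E)$ obtained in Theorem~\ref{main} is actually a \emph{*}-homomorphism. It suffices to check this on the generators $S_v$, $S_f$, $S_{f^*}$ using the formulas from Lemma~\ref{lem:RC}: one has $R_L^*(S_v)^{*}=\bigl(\sum_{u\in R^{-1}(v)}S_u\bigr)^{*}=\sum_{u\in R^{-1}(v)}S_u=R_L^*(S_v)$ since $R^{-1}(v)\subseteq E^0$, and $R_L^*(S_f)^{*}=\sum_{x\in R^{-1}(f)}S_{x^*}=R_L^*(S_{f^*})=R_L^*(S_f^*)$, with the analogous identity for $S_{f^*}$. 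Hence $R_L^*$ is a *-homomorphism of *-algebras.

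Next, I would extend $R_L^*$ to the C*-level. By Remark~\ref{rem:gCs}, $L_{\mathbb{C}}(E)$ sits as a dense *-subalgebra of $C^*(E)$, so composing with this inclusion produces a *-homomorphism $L_{\mathbb{C}}(F)\to C^*(E)$. Since the images of all generators $S_x$, $S_{x^*}$ in $C^*(E)$ are partial isometries (hence of norm at most $1$), this map is bounded on the dense subalgebra with respect to the C*-norm on $C^*(E)$. Pulling back this C*-norm through $R_L^*$ gives a C*-seminorm on $L_{\mathbb{C}}(F)$ that is dominated by the universal C*-seminorm defining $C^*(F)$ in Definition~\ref{df:gCs}. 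Consequently the map factors through the universal enveloping C*-algebra, yielding a unique *-homomorphism $\widehat{R}:C^*(F)\to C^*(E)$ extending $R_L^*$. Denote this extension by $R_L^*$ as well.

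For functoriality, let $R\in\mathsf{ARG}(E,F)$ and $S\in\mathsf{ARG}(F,G)$. By Theorem~\ref{main}, $(S\circ R)_L^*=R_L^*\circ S_L^*$ holds as *-homomorphisms $L_{\mathbb{C}}(G)\to L_{\mathbb{C}}(E)$. Both $\widehat{(S\circ R)}$ and $\widehat{R}\circ\widehat{S}$ are *-homomorphisms $C^*(G)\to C^*(E)$ extending this same map on the dense *-subalgebra $L_{\mathbb{C}}(G)$; by uniqueness of the extension (equivalently, by density and continuity), they coincide. The identity case $\widehat{(\Id_E)_L^*}=\Id_{C^*(E)}$ is analogous, as $\Id_{L_{\mathbb{C}}(E)}$ already extends uniquely to $\Id_{C^*(E)}$. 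There is no real obstacle here; the content of the corollary is exhausted by the observation that admissibility was engineered precisely so that the Leavitt-level functor of Theorem~\ref{main} is automatically *-preserving, and the universal-enveloping-C*-algebra construction is itself functorial on *-algebras with bounded *-representations.
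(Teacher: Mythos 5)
Your proposal is correct and follows essentially the same route as the paper, which simply observes that the corollary is immediate because $C^*(E)$ is the universal enveloping C*-algebra of the Leavitt path *-algebra; you merely spell out the details (that $R_L^*$ is *-preserving on generators, and that the universal enveloping construction extends *-homomorphisms uniquely and functorially).
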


\subsection{Subcategories and factorizations}

This time our result on subcategories is slightly stronger, i.e.\ we have equivalences instead of implications.
\begin{proposition}\label{lem:co-contra-regularity}
Let $E$ and $F$ be any graphs and let $\varphi\in\mathsf{TBPOG}(E,F)$ and $\vartheta\in\mathsf{MIPG}(F,E)$. Then
\begin{enumerate}
\item
$\varphi\in\mathsf{CRTBPOG}(E,F)$ if and only if $R_\varphi\in\mathsf{ARG}(E,F)$,
\item
$\vartheta\in\mathsf{RMIPG}(F,E)$ if and only if
$R^\vartheta\in\mathsf{ARG}(E,F)$.
\end{enumerate}
\end{proposition}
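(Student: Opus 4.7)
My plan is to exploit Proposition~\ref{prop:embedding.Cohn}, which already places $R_\varphi$ and $R^\vartheta$ in $\mathsf{CRG}$ whenever $\varphi\in\mathsf{TBPOG}(E,F)$ and $\vartheta\in\mathsf{MIPG}(F,E)$. Consequently, for both (1) and (2) the only admissibility condition I still need to verify is the new regularity condition, matching it against the classical contravariant and covariant regularity of Definitions~\ref{conadm} and~\ref{covadm}.

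For part (1), I unpack $R_\varphi^{-1}(v)=\varphi^{-1}(v)$ and $G_{R_\varphi}^1=\{(e,\varphi(e)):e\in E^1\}$ (thanks to $\varphi$ being length-preserving). Regularity of $R_\varphi$ then reads: for every $v\in\reg(F)$, $u\in\varphi^{-1}(v)$ and $x\in s_{PE}^{-1}(u)$, there exists $e\in s_E^{-1}(u)$ with $x\sim e$. If $|x|\geq 1$ one takes $e$ to be the initial edge of $x$, so the only content lies in the case $x=u$, which is equivalent to ``$u$ is not a sink''. The forward direction is then immediate from $u\in\reg(E)\supseteq\varphi^{-1}(\reg(F))$. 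For the reverse, the ``not a sink'' property is delivered by the $x=u$ instance of regularity, while ``not an infinite emitter'' is extracted from the finite decomposition $s_E^{-1}(u)=\bigsqcup_{f\in s_F^{-1}(v)}(\varphi^{-1}(f)\cap s_E^{-1}(u))$, each summand finite by properness of $\varphi$ and the index set finite because $v\in\reg(F)$.

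For part (2), injectivity of $\vartheta$ on vertices gives $(R^\vartheta)^{-1}(v)=\{\vartheta(v)\}$ and $G_{R^\vartheta}^1=\{(\vartheta(f),f):f\in F^1\}$. Writing $S_v:=\vartheta(s_F^{-1}(v))$, regularity of $R^\vartheta$ at $v\in\reg(F)$ becomes: every $x\in s_{PE}^{-1}(\vartheta(v))$ is $\sim$-comparable to some element of $S_v$. The injectivity clause in the $\mathsf{RMIPG}$ regularity is automatic from monotonicity of $\vartheta$, since $\vartheta(f)=\vartheta(f')$ yields $\vartheta(f)\preceq\vartheta(f')$ and thus $f=f'$. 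For the forward direction $\vartheta\in\mathsf{RMIPG}\Rightarrow R^\vartheta\in\mathsf{ARG}$, I split on whether $v\in\reg_0(F)$ falls under the collapse case $S_v=\{\vartheta(v)\}$ (in which case $\vartheta(v)\preceq x$ is automatic) or the ``iff'' clause of Definition~\ref{covadm}(3) governs. In the latter case, starting from any $y_0\in S_v$ and a target path $x=e_1\cdots e_k$, I iteratively invoke the branching-completion sub-clause at positions $i=1,2,\ldots$ with $e=e_i$ to construct $y^{(j)}=e_1\cdots e_j\,y'^{(j)}\in S_v$; the iteration stops either at $j=k$ (giving $x\preceq y^{(k)}\in S_v$) or at the first $j$ with $y'^{(j)}$ a vertex (giving $y^{(j)}=e_1\cdots e_j\preceq x$ and $y^{(j)}\in S_v$).

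The reverse direction of part (2) is where I expect the main obstacle. Assuming $R^\vartheta$ is regular, I distinguish $v\in\reg(F)\setminus\reg_0(F)$ from $v\in\reg_0(F)$, where the collapse $S_v=\{\vartheta(v)\}$ is an alternative to the ``iff'' characterization. The heart of the argument is the sharp characterization of $S_v$. For its forward half: the ``$\vartheta(f)$ not a vertex'' clause follows by a loop and monotonicity argument outside $\reg_0(F)$; the ``no proper extension'' clause is a direct consequence of monotonicity; the branching-completion clause, for $y=\vartheta(f)=e_1\cdots e_n\in S_v$ and alternative edge $e$ at position $i$, is obtained by applying $R^\vartheta$-regularity to the probe $x=e_1\cdots e_{i-1}e$ and using monotonicity together with ``no proper extension'' to eliminate the strict sub-case $\vartheta(f')\prec x$. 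The converse half is the most delicate: given $y$ satisfying all three sub-clauses, I first use branching-completion at position $1$ to conclude $y\in s_{PE}^{-1}(\vartheta(v))$; then $R^\vartheta$-regularity supplies $f\in s_F^{-1}(v)$ with $y\sim\vartheta(f)$; finally $y\prec\vartheta(f)$ is ruled out by ``no proper extension'' while $\vartheta(f)\prec y$ is ruled out by applying branching-completion one step past $\vartheta(f)$ and invoking monotonicity to derive a contradiction.
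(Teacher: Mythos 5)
Your proposal is correct and follows essentially the same route as the paper: after invoking Proposition~\ref{prop:embedding.Cohn} to reduce everything to the regularity condition, part (1) is handled by the same sink/finite-emission count (non-sink from the $x=u$ instance of regularity, finiteness from properness and $|s_F^{-1}(v)|<\infty$), and part (2) by the same case analysis against Definition~\ref{covadm}, with the iterative use of clause (3)(1)(b)(iii) in the forward direction and the characterization of $\vartheta(s_F^{-1}(v))$ via monotonicity and the probe paths $e_1\cdots e_{i-1}e$ in the reverse direction. The minor reorganizations (reducing $R_\varphi$-regularity to ``no sinks in $\varphi^{-1}(\reg(F))$'', and ruling out the two strict comparabilities separately at the end) do not change the substance of the argument.
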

\begin{proof}
(1) If $\varphi\in\mathsf{TBPOG}(E,F)$, then $R_\varphi\in \mathsf{CRG}(E,F)$ by Proposition~\ref{prop:embedding.Cohn}.
Assume first that $\varphi^{-1}(\reg(F))\subseteq \reg(E)$, and take $u\in E^0$ and $x\in  s_{PE}^{-1}(u)$ such that 
$v:=\varphi(u)\in \reg(F)$. Then $u\in \reg(E)$. If $x\in E^0$, then $x=u$. If $e\in s_E^{-1}(u)$, which exists because 
$u\in \reg(E)$, then $(e,\varphi(e))\in R_{\varphi}$ and $x\preceq e$. If $|x|\geq 1$, we take $e$ as the first edge of~$x$. 
Then $(e,\varphi(e))\in R_{\varphi}$ and $e\preceq x$. In both cases, $\varphi(e)\in s_F^{-1}(v)$ and $e\sim x$. Hence, $R_{\varphi}$ is regular.

Assume now that $R_{\varphi}$ is regular, and take $v\in \reg(F)$. We have to prove that if $u\in E^0$ is such that $\varphi(u)=v$, then $u \in \reg(E)$. 
From the 
regularity of $R_{\varphi}$ and the fact that $u\preceq u$,  we infer that $s_E^{-1}(u)\neq\emptyset$. 
Also, note that $s_E^{-1}(u)\subseteq \varphi^{-1}(s_F^{-1}(v))$. Furthermore, since $R_{\varphi}$ is proper and $s_F^{-1}(v)$ is finite, $s_E^{-1}(u)$ is finite. 
Therefore, $u\in \reg(E)$.

(2) If $\vartheta\in\mathsf{MIPG}(F,E)$, then  $R^{\vartheta}\in\mathsf{CRG}(E,F)$ by Proposition~\ref{prop:embedding.Cohn}.
Assume first that $\vartheta$ is regular, and take $v\in \reg(F)$, $u=\vartheta(v)$ and $x\in  \mathrm{FP}(E)$ such that $s_{PE}(x)=u$. We have to find 
$x'\in  \mathrm{FP}(E)$ such that $x'=\vartheta(f)$ for some $f\in s_F^{-1}(v)$ and $x\sim x'$. We divide the quest into three cases.

Case 1: $|x|=0$. In this case,  $x=u$. Take any $f\in s_F^{-1}(v)$, which exists because $v\in \reg(F)$, and let $x'=\vartheta(f)$. Then $(x',f)\in R^{\vartheta}$ and 
$s_{PE}(x')=u=x$, so $x\preceq x'$.

Case 2: $x=e_1\ldots e_k$ for some $k\geq 1$, $e_1,\ldots,e_k\in E^1$ and $v$ satisfies the condition (3)(1) of Definition~\ref{covadm}.
Now, let $\{f_1,\ldots,f_m\}:=s_F^{-1}(v)$ 
and $x_i:=\vartheta(f_i)$ for $i=1,\ldots,m$. If $e_1=x_i$ for some $i\in \{1,\ldots,n\}$, take $x'=x_i$, and we are done. Otherwise,  consider the set 
$I_1=\{i\in\{1,\ldots,n\}\mid e_1\preceq x_i\}$. This set is nonempty by the condition (3)(1)(b)(iii) of Definition~\ref{covadm} because $s_E(e_1)=u$. 
If $e_1e_2=x_i$ for some 
$i\in I_1$, we take $x'=x_i$, and again we are done. Otherwise, consider $I_2=\{i\in I_1\mid e_1e_2\preceq x_i\}$. This set is nonempty by the condition
 (3)(1)(b)(iii) of 
Definition~\ref{covadm} because, for every $i\in I_1$, the source of the second edge of $x_i$ coincides with $t_E(e_1)=s_E(e_2)$. 
We continue the process this way. Then, we 
either find $1\leq l\leq k$ such that $x_i=e_1\cdots e_l$ and  take $x=x_i$, or, using the condition (3)(1)(b)(iii) of Definition~\ref{covadm} at the last step, we find 
$x_i$ such that $x=e_1\cdots e_k\prec x_i$, and  take $x'=x_i$. Hence, $R^\vartheta$ is regular, whence admissible.

Case 3: $v\in\reg_0(F)$ and $\vartheta(s_F^{-1}(v))=\vartheta(v)$. In this case, we can take $x'=u$ because $(u,f)\in R^{\vartheta}$ for the single edge 
$f\in s_F^{-1}(v)$.

Assume now that $R^{\vartheta}$ is admissible. In particular, it is regular. 
Let $v\in \reg(F)$ and $u= \vartheta(v)$. We need to consider two cases. In the first case, we assume that there exists 
$f\in s_F^{-1}(v)$ such that $\vartheta(f)=u$. Then $f$ is a loop based at $v$, and the monotonicity of $\vartheta$ impies that $v\in \reg_0(E)$, which shows
that $\vartheta$ satisfies the
condition  (3)(2) in Definition~\ref{covadm}. 

In the second case, we assume that there does not exist $f$ as above. Then $ \vartheta(f)$ has 
positive length for all $f\in s_F^{-1}(v)$. Write $s_F^{-1}(v)=:\{f_1,\ldots,f_m\}$ and $x_i:= \vartheta(f_i)$ for $i=1,\ldots,m$. 
Note that, in the case we are considering 
now, $|x_i|\geq 1$ for all $i=1,\ldots,m$. Finally, observe that the monotonicity of $R^{\vartheta}$ implies that the restriction of $\vartheta$ to
$s_F^{-1}(v)$ yields  a bijection between $\{f_1,\ldots,f_m\}$ 
and $\{x_1,\ldots,x_m\}$, so $\vartheta$ satisfies the condition (3)(1)(a) of Definition~\ref{covadm}.

We now prove that $\vartheta$ satisfies the condition (3)(1)(b) of Definition~\ref{covadm}. Assume  first that $x=x_i$ for some $i=1,\ldots,m$. 
As mentioned above, $n:=|x_i|\geq 1$, so 
$x_i=e_1\ldots e_n$ for some $e_1,\ldots,e_n\in E^1$, and $x$ satisfies the condition (3)(1)(b)(i). Next, if $xx'=x_j$ for some $j=1,\ldots,m$, the monotonicity of 
$\vartheta$ implies that $i=j$ and $x'=t_{PE}(x)$, so  $x$ satisfies the condition (3)(1)(b)(ii).
Finally, let 
$l\in \{1,\ldots,n\}$, $e\in s_E^{-1}(s_E(e_l))$ and $\ol{x}=e_1\cdots e_{l-1}e$. Note that $s_{PE}(\ol{x})=u$. By the regularity of $R^{\vartheta}$, there exists 
$j\in\{1,\ldots,m\}$ such that $x_j\sim \ol{x}$. 
Since  the monotonicity of $R^{\vartheta}$ excludes $x_j\prec\ol{x}$, we conclude that $x_j=\ol{x}x''$ for some 
$x''\in  \mathrm{FP}(E)$. This proves that also the condition (3)(1)(b)(iii) is satisfied by~$x$.
Assume now that $x\in  \mathrm{FP}(E)$ satisfies the three conditions (3)(1)(b)(i)--(3)(1)(b)(iii). By (3)(1)(b)(i), $x$ is not a vertex. Let $e$ be the first edge of $x$. By 
(3)(1)(b)(iii), there exists $x'\in FP(E)$ such that $ex'\in \vartheta(s_F^{-1}(v))$, that is, there exists $f\in s_F^{-1}(v)$ such that $\vartheta(f)=ex'$. Then
\[
s_{PE}(x)=s_E(e)=s_{PE}(ex')=s_{PE}(\vartheta(f))=\vartheta(s_F(f))=\vartheta(v)=u. 
\]
Furthermore, by the regularity of $R^\vartheta$, there exists $i\in\{1,\ldots,m\}$ such that 
$x\sim x_i$. However, the monotonicity of $R^{\vartheta}$ excludes $x_i\prec x$ by (3)(1)(b)(iii), so $x\preceq x_i$. Hence,
$x=x_i$ by (3)(1)(b)(ii).  
\end{proof}

The above proposition combined with Proposition~\ref{prop:embedding.Cohn} yields:
\begin{corollary}
The embedding of $\mathsf{OG}$ into $\mathsf{RG}$ induces an embedding of $\mathsf{CRTBPOG}$ into $\mathsf{ARG}$.
The embedding of $\mathsf{PG}^{\mathrm{op}}$ into $\mathsf{RG}$ induces an embedding of $\mathsf{RMIPG}^{\mathrm{op}}$ into $\mathsf{ARG}$.
\end{corollary}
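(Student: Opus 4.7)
The plan is to combine the two previous results. Recall that the category $\mathsf{ARG}$ is by construction the subcategory of $\mathsf{CRG}$ cut out by the additional regularity condition on relation morphisms (the other six axioms of admissibility are exactly those defining $\mathsf{CRG}$). Hence it suffices to show, for each of the two claimed embeddings, that (i) the image already lands in $\mathsf{CRG}$ and (ii) the extra regularity condition on the relation morphism corresponds, under the assignment $\varphi\mapsto R_\varphi$ or $\vartheta\mapsto R^\vartheta$, exactly to the regularity condition defining $\mathsf{CRTBPOG}$ inside $\mathsf{TBPOG}$, respectively $\mathsf{RMIPG}$ inside $\mathsf{MIPG}$.

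For the first embedding, I would argue as follows. Take $\varphi\in\mathsf{CRTBPOG}(E,F)$. Since $\mathsf{CRTBPOG}\subseteq\mathsf{TBPOG}$, Proposition~\ref{prop:embedding.Cohn} gives $R_\varphi\in\mathsf{CRG}(E,F)$, so the only outstanding property of $R_\varphi$ to check for membership in $\mathsf{ARG}$ is regularity. But this is precisely the content of the forward direction of Proposition~\ref{lem:co-contra-regularity}(1). Conversely, if $\varphi\in\mathsf{TBPOG}(E,F)$ and $R_\varphi\in\mathsf{ARG}(E,F)$, the reverse direction of the same proposition shows $\varphi\in\mathsf{CRTBPOG}(E,F)$, confirming that the assignment $\varphi\mapsto R_\varphi$ restricts to a faithful identity-on-objects functor $\mathsf{CRTBPOG}\to\mathsf{ARG}$, i.e.\ an embedding of categories.

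The second embedding is handled symmetrically, using Proposition~\ref{lem:co-contra-regularity}(2) in place of (1). Namely, for $\vartheta\in\mathsf{RMIPG}(F,E)$, the inclusion $\mathsf{RMIPG}\subseteq\mathsf{MIPG}$ together with Proposition~\ref{prop:embedding.Cohn} gives $R^\vartheta\in\mathsf{CRG}(E,F)$, and Proposition~\ref{lem:co-contra-regularity}(2) promotes this to $R^\vartheta\in\mathsf{ARG}(E,F)$. The same proposition also supplies the converse, so the assignment $\vartheta\mapsto R^\vartheta$ restricts to an embedding $\mathsf{RMIPG}^{\mathrm{op}}\hookrightarrow\mathsf{ARG}$.

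There is no real obstacle here: functoriality of both restricted assignments is inherited from the ambient embeddings into $\mathsf{CRG}$ established in Proposition~\ref{prop:embedding.Cohn}, and the only thing to verify beyond this is the compatibility between the two distinct notions of ``regular'' (the one in Definition~\ref{conadm}(4) for $\mathsf{OG}$, the one in Definition~\ref{covadm}(3) for $\mathsf{PG}$, and the one for relation morphisms), which is already packaged into the two equivalences of Proposition~\ref{lem:co-contra-regularity}. Thus the corollary reduces to assembling the two previous statements.
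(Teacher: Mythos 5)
Your proposal is correct and matches the paper's own argument: the corollary is obtained exactly by combining Proposition~\ref{prop:embedding.Cohn} (which places $R_\varphi$, resp.\ $R^\vartheta$, in $\mathsf{CRG}$) with the two equivalences of Proposition~\ref{lem:co-contra-regularity} (which handle the remaining regularity condition). Nothing further is needed.
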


Unlike for path algebras and Cohn path algebras, the factorization claim for Leavitt path algebras requires the target graph to be row finite.
The fact that this additional assumption is needed is shown in Example~\ref{ex:nopull}.
\begin{proposition}\label{rowf}
Let $E$ be any graph and $F$ be a row-finite graph. Then
every admissible relation morphism $R\in\mathsf{ARG}(E,F)$ admits a factorization
$R=R_\varphi\circ R^\vartheta$, where $\vartheta\in\mathsf{RMIPG}(G,E)$ and $\varphi\in\mathsf{CRTBPOG}(G,F)$. This factorization is unique (up to a canonical 
isomorphism), and given explicitly by the construction in Proposition~\ref{prop:factor}.
\end{proposition}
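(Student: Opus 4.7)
The plan is to build on Proposition~\ref{prop:unique2} and upgrade the regularity of both factor morphisms, leveraging the admissibility of $R$ together with the row-finiteness of~$F$. Since $\mathsf{ARG}\subseteq\mathsf{CRG}$, I first invoke Proposition~\ref{prop:unique2} to obtain the factorization $R=R_\varphi\circ R^\vartheta$ with $\vartheta\in\mathsf{MIPG}(G_R,E)$ and $\varphi\in\mathsf{TBPOG}(G_R,F)$, unique up to canonical isomorphism. It then remains to verify that $\varphi$ satisfies Definition~\ref{conadm}(4) and $\vartheta$ satisfies Definition~\ref{covadm}(3); uniqueness of the upgraded factorization follows automatically from Proposition~\ref{prop:unique2}, since any $\mathsf{RMIPG}/\mathsf{CRTBPOG}$-factorization is in particular an $\mathsf{MIPG}/\mathsf{TBPOG}$-factorization.

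For the regularity of $\varphi$, take $v\in\reg(F)$ and $(u,v)\in\varphi^{-1}(v)$. Applying the regularity of $R$ to the trivial path $x:=u\in s_{PE}^{-1}(u)$ produces an $(x',f)\in G_R^1$ with $s_F(f)=v$ and $u\sim x'$; since $u$ is a vertex, either case of the comparability forces $s_{PE}(x')=u$, so $(x',f)$ is an outgoing edge of $(u,v)$ in $G_R$, ruling out that $(u,v)$ is a sink. For the finite-emitter condition, outgoing edges of $(u,v)$ are the pairs $(x,f)\in G_R^1$ with $s_{PE}(x)=u$ and $s_F(f)=v$; row-finiteness of $F$ makes $s_F^{-1}(v)$ finite, and properness of $R$ makes each $R^{-1}(f)$ finite, so $s_{G_R}^{-1}((u,v))$ is finite. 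Hence $(u,v)\in\reg(G_R)$, proving $\varphi\in\mathsf{CRTBPOG}(G_R,F)$.

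For the regularity of $\vartheta$, the cleanest route is to apply Proposition~\ref{lem:co-contra-regularity}(2), thereby reducing the task to checking that $R^\vartheta:E\to G_R$ is admissible. All seven conditions except regularity are immediate from $\vartheta\in\mathsf{MIPG}$: multiplicativity, decomposability and monotonicity are inherited from $\vartheta$ being a monotone path homomorphism; properness and target bijectivity hold because each preimage $(R^\vartheta)^{-1}(z)=\{\vartheta(z)\}$ is a singleton; and vertex disjointness is the injectivity of $\vartheta$ on vertices. The delicate step is the regularity of $R^\vartheta$: given $(u_0,v_0)\in\reg(G_R)$ and $x\in s_{PE}^{-1}(u_0)$, row-finiteness of $F$ is used to conclude $v_0\in\reg(F)$, since $(u_0,v_0)$ not being a sink in $G_R$ forces $v_0$ not to be a sink in $F$, and a row-finite $F$ has no infinite emitters. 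Regularity of $R$ can then be applied to $v_0$, $u_0$ and $x$, producing $(x',f)\in G_R^1$ with $s_F(f)=v_0$ and $x\sim x'$; in either case of comparability one has $s_{PE}(x')=u_0$, so $z:=(x',f)\in G_R^1$ satisfies $s_{G_R}(z)=(u_0,v_0)$ and $\vartheta(z)=x'\sim x$, as required.

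The main obstacle is the implication $(u_0,v_0)\in\reg(G_R)\Rightarrow v_0\in\reg(F)$, which fails without row-finiteness: for non-row-finite $F$, a regular vertex of $G_R$ may map to an infinite emitter of~$F$, leaving regularity of $R$ inapplicable and breaking the admissibility of $R^\vartheta$. As Example~\ref{ex:nopull} confirms, the factorization itself can fail in that setting, so row-finiteness of $F$ is genuinely needed for the approach sketched here.
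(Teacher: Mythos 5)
Your proposal is correct and follows essentially the same route as the paper's proof: invoke Proposition~\ref{prop:unique2} for the $\mathsf{MIPG}/\mathsf{TBPOG}$ factorization and its uniqueness, verify the regularity of $\varphi$ directly from the regularity and properness of $R$ applied to the trivial path at $u$, and reduce the regularity of $\vartheta$ to that of $R^\vartheta$ via Proposition~\ref{lem:co-contra-regularity}, using row-finiteness of $F$ exactly where the paper does, namely to upgrade ``$v$ is not a sink'' to ``$v\in\reg(F)$''.
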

\begin{proof}
It follows from Proposition \ref{prop:unique2} that $R$ has a unique factorization $R=R_\varphi\circ R^\vartheta$, where $\vartheta\in\mathsf{MIPG}(G_R,E)$ and 
$\varphi\in\mathsf{TBPOG}(G_R,F)$. Now
we need to prove that $\varphi$ and $\vartheta$ are regular.

First we show that $\varphi$ is regular. Let $(u,v)\in \varphi^{-1}(\reg(F))$, that is, $v\in\reg(F)$. Since $s_{PE}(u)=u$, by the regularity of $R$, there exist 
$f\in s_F^{-1}(v)$ and $x\in  \mathrm{FP}(E)$ such that $(x,f)\in R$ and $x\sim u$. Note that the last condition is the same as $s_{PE}(x)=u$. 
Hence, $(x,f)\in G_R^1$ is 
such that $s_{G_R}(x,f)=(u,v)$ and $(u,v)$ is not a sink. Furthermore, since $s_F^{-1}(v)$ is finite and $R$ is proper, we infer  that $s_{G_R}^{-1}(u,v)$ is finite.
Therefore, $(u,v)\in\reg(G_R)$.

 By Proposition~\ref{lem:co-contra-regularity}, to show the regularity of $\vartheta$ it suffices to prove  that $R^{\vartheta}$ is regular. 
To this end, take $(u,v)\in \reg(G_R)$ and 
$x\in  \mathrm{FP}(E)$  such that $s_{PE}(x)=u$. Now we have to find $f\in s_F^{-1}(v)$ and $x'\in  \mathrm{FP}(E)$ such that $(x',f)\in R$ and $x\sim x'$. 
We see from the 
definition of $s_{G_R}$ that $v$ is not a sink. Hence, since $F$ is row-finite, $v$ is regular. Now the existence of $f$ and $x'$ follows from the regularity of~$R$.
\end{proof}

Let us now consider an example of transitively closed admissible relations composing to an admissible relation that is not transitively closed and whose 
transitive closure is not admissible. This example shows that the transitive closure is at odds with admissibility, which indicates that Proposition~\ref{criterion}
is a serious obstruction for pushout-type factorizations of admissible relation morphisms.
\begin{example}\footnote{We are grateful to Jack Spielberg for discussions concerning this example.}
Consider the relation morphism $R:=R_\varphi\circ R^{\vartheta}$ depicted below.
\begin{center}
\begin{tikzpicture}[xscale=3,yscale=2]

\node[draw,rectangle,color=gray!60] (a1) at (0,0) {\includegraphics[page=38,valign=c]{images.pdf}};
\node[draw,rectangle,color=gray!60] (a2) at (1.5,1.5) {\includegraphics[page=39,valign=c]{images.pdf}};
\node[draw,rectangle,color=gray!60] (a3) at (3,0) {\includegraphics[page=40,valign=c]{images.pdf}};

\path[-To,font=\footnotesize,inner sep=1pt,shorten <=2pt,shorten >=2pt]
(a2) edge node[above left] {$\vartheta$} (a1)
(a2) edge node[above right] {$\varphi$} (a3)
(a1) edge[dashed] node[above] {$R$} (a3);

\draw
(a1.south) node[below] {$E$}
(a2.east) node[right] {$G$}
(a3.south) node[below] {$F$};

\end{tikzpicture}
\end{center}
Here the map $\vartheta$ is given by $\vartheta(w):=w$, $\vartheta(e):=e$ and $\vartheta(f):=v=:\vartheta(v)$, and the map $\varphi$ is given by 
$\varphi(v):=u=:\varphi(w)$ and $\varphi(e):=h=:\varphi(f)$. One can easily verify that 
$\vartheta\in \mathsf{RMIPG}$ and $\varphi\in\mathsf{CRTBPOG}$, so
$R^\vartheta$ and $R_\varphi$ are admissible and transitively closed. Since the composition preserves 
admissibility, we infer  that 
\[
R=\big\{(v,u),(v,h^n),(w,u),(e^n,h^n)\mid n\in\mathbb{N}\setminus\{0\}\big\}
\]
is admissible.
\begin{center}
\begin{tikzpicture}[baseline=(current bounding box.center),semithick,font=\small,scale=0.7]

\draw[-To] (-0.5,0) -- (8.5,0) node[below] {$\mathrm{FP}(E)$};
\draw[-To] (0,-0.5) -- (0,6.5) node[left] {$\mathrm{FP}(F)$};

\filldraw (0,0) circle (0.05);

\foreach \x in { 1,2,3,4,6} \draw (\x,0.1) -- (\x,-0.1);
\foreach \y in { 1,2,3,5} \draw (0.1,\y) -- (-0.1,\y);

\node[above] at (1,-0.8) {$v$};
\node[above] at (2,-0.8) {$w$};
\node[above] at (3,-0.8) {$e$};
\node[above] at (4,-0.8) {$e^2$};
\node[above] at (5,-0.8) {$\cdots$};
\node[above] at (6,-0.8) {$e^n$};
\node[above] at (7,-0.8) {$\cdots$};

\node[right] at (-1,1) {$u$};
\node[right] at (-1,2) {$h$};
\node[right] at (-1,3) {$h^2$};
\node[right] at (-0.85,4.1) {$\vdots$};
\node[right] at (-1,5) {$h^n$};
\node[right] at (-0.85,5.9) {$\vdots$};

\filldraw (1,1) circle (0.05);
\filldraw (1,2) circle (0.05);
\filldraw (1,3) circle (0.05);
\filldraw (1,5) circle (0.05);
\node at (1,4.1) {$\vdots$};
\node at (1,5.9) {$\vdots$};

\filldraw (2,1) circle (0.05);
\filldraw (3,2) circle (0.05);
\filldraw (4,3) circle (0.05);
\filldraw (6,5) circle (0.05);
\node[rotate=80] at (4.85,4) {$\ddots$};
\node[rotate=80] at (6.85,5.9) {$\ddots$};

\end{tikzpicture}
\end{center}
However, it is not transitively closed because
$\{(v,u),(w,u),(v,h)\}\subseteq R$ and \mbox{$(w,h)\notin R$}. We cannot remedy the situation by taking the transitive closure of $R$ because
the transitive closure
$\overline{R}=\mathrm{FP}(E)\times\mathrm{FP}(F)$ is not proper, whence it is not admissible.
\end{example}

\section{Examples and counterexamples of factorizations}\label{sec:7}
\noindent
The aim of this section is to instantiate the diversity of possible situations one encounters when trying to factorize admissible relations
into admissible graph homomorphisms and admissible path homomorphisms of graphs.
Example~3.12 already shows that not every $R\in\mathsf{RG}(E,F)$ admitting 
a pullback-type factorization enjoys a pushout-type factorization.
Below, we give two examples of $R\in\mathsf{ARG}(E,F)$ that admit admissible pullback-type factorizations 
but whose pushout-type factorizations
either fail the target-surjectivity condition or the regularity condition.

\begin{example}
Consider the following commutative diagram:
\begin{center}
\begin{tikzpicture}[xscale=7,yscale=3.8]

\node[draw,rectangle,color=gray!60] (a1) at (135:1) {\includegraphics[page=1,valign=c]{images.pdf}};
\node[draw,rectangle,color=gray!60] (a2) at ($(45:1)+(135:1)$) {\includegraphics[page=2,valign=c]{images.pdf}};
\node[draw,rectangle,color=gray!60] (b1) at (0,0) {\includegraphics[page=28,valign=c]{images.pdf}}; 
\node[draw,rectangle,color=gray!60] (b2) at (45:1) {\includegraphics[page=3,valign=c]{images.pdf}};

\path[-To,font=\footnotesize,inner sep=1pt,shorten <=2pt,shorten >=2pt]
(a1) edge[dashed] node[above=2pt] {$R$} (b2)
(a2) edge node[above left] {$\vartheta$} (a1)
(b2) edge node[below right] {$\vartheta'$} (b1)
(a1) edge node[below left] {$\varphi'$} (b1)
(a2) edge node[above right] {$\varphi$} (b2);

\draw
(a1.north) node[above left] {$E$}
(b2.north) node[above] {$F$}
(a2.west) node[left] {$G_R$}
(b1.west) node[left] {$H$};

\end{tikzpicture}.
\end{center}
Here the graph $E$ has vertices $\{1,\ldots,n\}$ and one edge $e_i$ from the vertex $i$ to the vertex $i+1$ for every $1\leq i\leq n-1$.
The graph $F$ has vertices $\{1,2\}$ and edges $\{f_1,\ldots,f_{n-1}\}$
all with the source $1$ and the target~$2$.
The admissible relation $R$ is 
\begin{equation}
R:=\{(i,1)\mid 1\leq i\leq n-1\}\cup\{(n,2)\}\cup\{(e_i\ldots e_{n-1},f_i)\mid 1\leq i\leq n-1\}.
\end{equation}
Next, as $F$ is row finite, we conclude  from Proposition~\ref{rowf} that 
$R=R_\varphi\circ R^\vartheta$ with $\vartheta\in\mathsf{RMIPG}(G_R,E)$ and 
$\varphi\in\mathsf{CRTBPOG}(G_R,F)$. We easily verify that
the relation graph $G_R$ is isomorphic to the graph with $n$ vertices and one edge $g_i$ 
from the  $i$th vertex to the  $n$th vertex for every $1\leq i\leq n-1$. Also, we can explicitly compute:
\begin{gather}
\vartheta\big(i\big)=i,\;1\leq i\leq n,\quad\vartheta(g_i)=e_i\ldots e_{n-1},\;1\leq i\leq n-1,
\nonumber\\
\varphi\big(1\big)=\ldots=\varphi\big(n-1\big)=1 , \quad
\varphi\big(n\big)=2 , \quad
\varphi(g_i)=f_i\;1\leq i\leq n-1 .
\end{gather}

On the other hand, it is easy to check that $R=R^{\vartheta'}\circ R_{\varphi'}$ where the codomain $H$ of $\varphi'$ and $\vartheta'$ has vertices
 $\{1,2\}$, $n-2$ loops $h_1,\ldots,h_{n-2}$ based at the vertex $1$, and one edge $h_{n-1}$ from the vertex $1$ to the vertex $2$.
The morphism $\varphi':E\to H$ is given by
\[
\varphi'(1)=\ldots=\varphi'(n-1)=1 , \qquad
\varphi'(n)=2 , \qquad
\varphi'(e_i)=h_i ,\,\;\forall\;1\leq i\leq n-1.
\]
The morphism  $\vartheta'$ sends $f_i$ to the path $h_ih_{i+1}\ldots h_{n-1}$.
It is easy to check that $\vartheta'\in
\mathsf{MIPG}(F,H)$. On the flip side, $\varphi'\in\mathsf{POG}(E,H)$ is regular but fails to be target surjective.
It is important to emphasize that there is no pushout-type factorization that can remedy the lack of target surjectivity because
$1\in H^0$ is cannot be a source and $1\in\varphi'^{-1}(1)$ is a source.

Note that the Leavitt path algebras (respectively graph C*-algebras) of the graphs
$E$, $F$ and $G_R$ are all isomorphic to $M_n(k)$ (respectively $M_n(\mathbb{C})$) \cite{R05}.
One can check that the algebra homomorphisms induced by $R$, $R_\varphi$ and $R^\vartheta$ are isomorphisms.
The graph $H$ is a $1$-sink extension of the bouquet graph with $n-2$ loops.
\end{example}

\begin{example}
Consider the following commutative diagram:
\begin{center}
\begin{tikzpicture}[xscale=5,yscale=3.8]

\node[draw,rectangle,color=gray!60] (a1) at (135:1) {\includegraphics[page=9,valign=c]{images.pdf}};
\node[draw,rectangle,color=gray!60] (a2) at ($(45:1)+(135:1)$) {\includegraphics[page=8,valign=c]{images.pdf}};
\node[draw,rectangle,color=gray!60] (b1) at (0,0) {\includegraphics[page=7,valign=c]{images.pdf}};
\node[draw,rectangle,color=gray!60] (b2) at (45:1) {\includegraphics[page=7,valign=c]{images.pdf}};

\path[-To,font=\footnotesize,inner sep=1pt,shorten <=2pt,shorten >=2pt]
(a1) edge[dashed] node[above=2pt] {$R$} (b2)
(a2) edge node[above left] {$\vartheta$} (a1)
(b2) edge node[below right] {$\vartheta'$} (b1)
(a1) edge node[below left] {$\varphi'$} (b1)
(a2) edge node[above right] {$\varphi$} (b2);

\draw
(a1.north) node[above] {$E$}
(b2.north) node[above] {$F$}
(a2.west) node[left] {$G_R$}
(b1.west) node[left] {$H$};

\end{tikzpicture}.
\end{center}
Here the graph $E$ has vertices $\{1,2\}$, a loop $e_{11}$ at the vertex $1$,
and edges $e_{12}$ from $1$ to $2$ and $e_{21}$ from $2$ to $1$. The graph
$F$ has one vertex and two loops $f_1,f_2$.
The admissible relation $R$ is generated by
\begin{equation}
G_R^0:=\{(1,1),(2,1)\} \quad\text{and}\quad
G_R^1:=\big\{(e_{11},f_1),(e_{12},f_1),(e_{21}e_{11},f_2),(e_{21}e_{12},f_2)\big\}
\end{equation}
using Lemma~\ref{lem:generation}. We find that $G_R$
is isomorphic to the graph with vertices $\{1,2\}$ and one arrow $g_{ij}$ from the vertex $i$ to the vertex $j$ for all $i,j\in\{1,2\}$.
 (It is the line graph of~$F$.) It 
follows from Proposition~\ref{rowf} that $R=R_\varphi\circ R^\vartheta$ with $\vartheta\in\mathsf{RMIPG}(G_R,E)$ and $\varphi\in\mathsf{CRTBPOG}(G_R,F)$.
Explicitly, $\varphi$ and $\vartheta$ are given by
\begin{gather}
\varphi(1)=\varphi(2)=1 , \qquad
\varphi(g_{11})=\varphi(g_{12})=f_1 , \qquad
\varphi(g_{21})=\varphi(g_{22})=f_2 , \nonumber\\
\vartheta(i)=i , \qquad
\vartheta(g_{1i})=e_{1i} , \qquad
\vartheta(g_{2i})=e_{21}e_{1i} , \qquad\forall\;i=1,2.
\end{gather}

On the other hand, it is easy to check that $R=R^{\vartheta'}\circ R_{\varphi'}$, where the codomain $H$ of $\varphi'$ and $\vartheta'$  
has one vertex and two loops $h_1,h_2$, and
the morphisms $\varphi'$ and $\vartheta'$ are given by
\begin{gather}
\varphi'(1)=\varphi'(2)=1 , \qquad
\varphi'(e_{11})=\varphi'(e_{12})=h_1 , \qquad
\varphi'(e_{21})=h_2 , \nonumber\\
\vartheta'(1)=1 , \qquad
\vartheta'(f_1)=h_1 , \qquad
\vartheta'(f_2)=h_2h_1 .
\end{gather}
Note that $\vartheta'$ is not the identity. In this example,
$\varphi'\in\mathsf{POG}(E,H)$ is regular but not target surjective, while $\vartheta'\in\mathsf{MIPG}(F,H)$ is not regular.
Moreover, one can easily check that $H$ must be a subgraph of any graph that might afford an admissible pushout-type factorization, and that the formulas for
$\vartheta'$ and $\varphi'$ are the only possible formulas. Hence,  an admissible pushout-type factorization of $R$ does not exist.

Finally, observe that the graph C*-algebras of all four graphs are isomorphic to the Cuntz algebra~$\mathcal{O}_2$, 
and that the maps $(R^\vartheta)^*$, $(R_\varphi)^*$ 
and $R^*$ are known to be isomorphisms.
On the other hand, $R^{\vartheta'}$ and $R_{\varphi'}$ do not induce homomorphisms of Levitt path
algebras, or *-homomorphisms of graph C*-algebras.
\end{example}

Our next example shows that the  assumption of row-finiteness in Proposition~\ref{rowf} is not redundant. It also shows that there might exist an admissible 
pushout-type factorization where there is no admissible pullback-type factorization.

\begin{example}\label{ex:nopull}
Consider the following commutative diagram
\begin{center}
\begin{tikzpicture}[xscale=5,yscale=3]

\node[draw,rectangle,color=gray!60] (a1) at (135:1) {\includegraphics[page=14,valign=c]{images.pdf}};
\node[draw,rectangle,color=gray!60] (a2) at ($(45:1)+(135:1)$) {\includegraphics[page=16,valign=c]{images.pdf}};
\node[draw,rectangle,color=gray!60] (b1) at (0,0) {\includegraphics[page=13,valign=c]{images.pdf}};
\node[draw,rectangle,color=gray!60] (b2) at (45:1) {\includegraphics[page=15,valign=c]{images.pdf}};

\path[-To,font=\footnotesize,inner sep=1pt,shorten <=2pt,shorten >=2pt]
(a1) edge[dashed] node[above=2pt] {$R$} (b2)
(a2) edge node[above left] {$\vartheta$} (a1)
(b2) edge node[below right] {$\vartheta'$} (b1)
(a1) edge node[below left] {$\varphi'$} (b1)
(a2) edge node[above right] {$\varphi$} (b2);

\draw
(a1.north) node[above] {$E$}
(b2.north) node[above] {$F$}
(a2.west) node[left] {$G_R$}
(b1.west) node[left] {$H$};

\end{tikzpicture}.
\end{center}
Here $E$ has vertices $1,\,2,\,3$, edges $e_{12},\,e_{13}$, and the source and target maps given by $s_E(e_{ij}):=i$ and $t_E(e_{ij}):=j$
for all~$e_{i,j}\in E^1$.
The graph $F$ has vertices $1,\,2,\,4$, an edge $e_{12}$ from $1$ to $2$ and countably infinitely many edges from the vertex $1$ 
to the vertex~$4$. The admissible relation $R$ equals $G_R^0  \cup G_R^1$ with
$G_R^0:=\{(1,1),(2,2)\}$ and $G_R^1:=\{(e_{12},e_{12})\}$.
Now, we find that $G_R$
is isomorphic to the graph with vertices $1,\,2$ and one arrow $e_{12}$ from the vertex $1$ to the vertex~$2$.
The morphisms $\varphi$ and $\vartheta$ are the obvious inclusions.
In this example, Proposition~\ref{rowf} does not apply because $F$ is not row finite,
and one can check that its claim does not hold. Indeed, $\varphi\in\mathsf{CRTBPOG}(G_R,F)$ and $\vartheta\in\mathsf{MIPG}(G_R,E)$, but the latter
 is not regular.
Since the pullback-type factorization in the category $\mathsf{CRG}$ exists and is uniquely given by the diagram above, this proves there exists no  
admissible
pullback-type factorization of~$R$.

On the other hand, it is easy to check that $R=R^{\vartheta'}\circ R_{\varphi'}$, where the codomain graph $H$ of $\varphi'$ and $\vartheta'$  
has vertices $1,\,2,\,3,\,4$, one edge $e_{12}$ from $1$ to $2$, one edge $e_{13}$ from $1$ to $3$ and countably infinitely many edges
from $1$ to $4$.
The morphisms $\varphi'$ and $\vartheta'$ are, again, the obvious inclusions of subgraphs. One easily checks that 
$\varphi'\in\mathsf{CRTBPOG}(E,H)$ and $\vartheta'\in\mathsf{RMIPG}(F,H)$.
\end{example}

Next, let us consider a family of examples motivated by noncommutative topology, involving quantum spheres and quantum balls. 
In these examples, both the admissible pullback-type factorizations and admissible pushout-type  factorizations exist.

\begin{figure}[t]
\begin{tikzpicture}[xscale=7.5,yscale=6]

\node[draw,rectangle,color=gray!60] (a1) at (135:1) {\includegraphics[page=4,valign=c]{images.pdf}};
\node[draw,rectangle,color=gray!60] (a2) at ($(45:1)+(135:1)$) {\includegraphics[page=10,valign=c]{images.pdf}};
\node[draw,rectangle,color=gray!60] (b1) at (0,0) {\includegraphics[page=5,valign=c]{images.pdf}};
\node[draw,rectangle,color=gray!60] (b2) at (45:1) {\includegraphics[page=6,valign=c]{images.pdf}};

\path[-To,font=\footnotesize,inner sep=1pt,shorten <=2pt,shorten >=2pt]
(a1) edge[dashed] node[above=2pt] {$R$} (b2)
(a2) edge node[above left] {$\vartheta$} (a1)
(b2) edge node[below right] {$\vartheta'$} (b1)
(a1) edge node[below left] {$\varphi'$} (b1)
(a2) edge node[above right] {$\varphi$} (b2);

\draw
(a1.north) node[above] {$E$}
(b2.north) node[above] {$F$}
(a2.west) node[left] {$G_R$}
(b1.west) node[left] {$H$};

\end{tikzpicture}
\caption{$S^{6}_q\to S^{7}_q$}
\label{fig:sfere}

\vspace*{5pt}
\end{figure}

\begin{example}\label{qsph}
Let $n\geq 1$. Consider the following graphs $E$ and $F$:
\begin{align}
E^0 &:=\{1,\ldots,n+2\}  \nonumber\\
E^1 &:=\{e_{i,j}\mid 1\leq i\leq j\leq n\}\cup \{e_{i,n+1},e_{i,n+2}\mid 1\leq i \leq n\} ,  \nonumber\\
F^0 &:=\{1,\ldots,n+1\}  \nonumber\\
F^1 &:=\{e_{i,j}\mid 1\leq i\leq j\leq n+1\} .
\end{align}
For both graphs, the source and target maps are given by $s(e_{i,j})=i$ and $t(e_{i,j})=j$ for all $i,j$.
The admissible relation $R$ is generated by
\begin{align}
G_R^0 &:=\big\{(i,i)\mid 1\leq i\leq n+1\big\}\cup\big\{(n+2,n+1)\big\} , \nonumber\\
G_R^1 &:=\big\{(e_{i,j},e_{i,j})\mid 1\leq i\leq j\leq n\big\}\cup\big\{(e_{i,n+1},e_{i,n+1}) ,(e_{i,n+2},e_{i,n+1}) \mid 1\leq i\leq n\big\}   \nonumber\\
&\qquad\cup\big\{(n+1,e_{n+1,n+1}),(n+2,e_{n+1,n+1})\big\} ,
\end{align}
using Lemma~\ref{lem:generation}.
It is straightforward to verify that $G_R$
is isomorphic to the graph obtained from $E$ by adding two loops $e_{n+1,n+1}$ and $e_{n+2,n+2}$ and the vertices $n+1$
 and $n+2$, respectively.
The morphism $\varphi$ identifies the vertex $n+2$ with $n+1$ and the edge $e_{i,n+2}$ with $e_{i,n+1}$ for all $i$.
The morphism $\vartheta$ is the identity on all vertices and edges except $e_{n+1,n+1}$ and $e_{n+2,n+2}$, and maps
$e_{n+1,n+1}$ to the vertex $n+1$ and $e_{n+2,n+2}$ to the vertex $n+2$.
It follows from Proposition~\ref{rowf} that $R=R_\varphi\circ R^\vartheta$ with $\vartheta\in\mathsf{RMIPG}(G_R,E)$ and 
$\varphi\in\mathsf{CRTBPOG}(G_R,F)$.

On the other hand, it is easy to check that $R=R^{\vartheta'}\circ R_{\varphi'}$, where the codomain graph $H$ of $\varphi'$ and $\vartheta'$  
has $H^0:=F^0$ and $H^1:=F^1\setminus\{e_{n+1,n+1}\}$.
The morphism $\vartheta'$ is  given by the identity on the part of $F$ coinciding with $H$ and maps $e_{n+1,n+1}$ to $n+1$.
The morphism $\varphi'$ identifies the vertex $n+2$ with $n+1$ and the edge $e_{i,n+2}$ with $e_{i,n+1}$ for all~$i$.
One verifies that $\vartheta'\in\mathsf{RMIPG}(F,H)$ and $\varphi'\in\mathsf{CRTBPOG}(E,H)$.
 
To end with, note that $C^*(E)\cong C(S^{2n}_q)$,
$C^*(F)\cong C(S^{2n+1}_q)$ and
$C^*(H)\cong C(B^{2n}_q)$~\cite{HS02}.
For $n=1$, $C^*(G_R)$ is the C*-algebra of a quantum lens space~\cite{bs18}.
For $n=3$, the commutative diagram is in Figure~\ref{fig:sfere}.
\end{example}

Our next family of examples concerns unital embeddings \cite{k-k09,hl24} of the celebrated Cuntz algebras~\cite{c-j77}.

\begin{example}\label{ex:cuntz}
Let $m,n,k$ be positive integers with $n\geq 2$, and  such that $m-1=k(n-1)$. Consider the following commutative diagram of admissible
morphisms in Figure~\ref{fig:Cuntz}.

\begin{figure}[t]
\begin{tikzpicture}[xscale=6,yscale=6]

\node[draw,rectangle,color=gray!60] (a1) at (135:1) {\includegraphics[page=36,valign=c]{images.pdf}};
\node[draw,rectangle,color=gray!60] (a2) at ($(45:1)+(135:1)$) {\includegraphics[page=37,valign=c]{images.pdf}};
\node[draw,rectangle,color=gray!60] (b1) at (0,0) {\includegraphics[page=34,valign=c]{images.pdf}};
\node[draw,rectangle,color=gray!60] (b2) at (45:1) {\includegraphics[page=35,valign=c]{images.pdf}};

\path[-To,font=\footnotesize,inner sep=1pt,shorten <=2pt,shorten >=2pt]
(a1) edge[dashed] node[above=2pt] {$R$} (b2)
(a2) edge node[above left] {$\vartheta$} (a1)
(b2) edge node[below right] {$\vartheta'$} (b1)
(a1) edge node[below left] {$\varphi'$} (b1)
(a2) edge node[above right] {$\varphi$} (b2);

\draw
(a1.north) node[above] {$E$}
(b2.north) node[above] {$F$}
(a2.west) node[left] {$G_R$}
(b1.west) node[left] {$H$};

\end{tikzpicture}
\caption{}
\label{fig:Cuntz}
\end{figure}

Here $E,F,H$ are the graphs given by
\begin{align}
	&E^0 :=\{1,\ldots,k\},\quad
	E^1 :=\{e_{a,b}\mid 0\leq a\leq n-1,\, 1\leq b\leq k\}, \nonumber\\
	&t_E(e_{a,b})=b,\quad
	 s_E(e_{a,b})=\begin{cases}
	 b-1 \text{ for } a=0,\, b\neq 1 \\
	 k \text{ for } a=0\text{ and }b=1, \text{ or } a\neq 0,
	 \end{cases}
	\nonumber\\
	&F^0 :=\{1\}, \quad
	F^1 :=\{f_{j}\mid 1\leq j\leq m\}, \nonumber\\
	&H^0 :=\{1\},\quad
	H^1 :=\{h_{i}\mid 1\leq i\leq n\}.
\end{align}
Now, consider the morphisms $\varphi'\in \mathsf{CRTBPOG}(E,H)$ and $\vartheta'\in\mathsf{RMIPG}(F,H)$ given by:
\begin{align}
\varphi'(e_{a,b})&=h_{a+1},\; 0\leq a\leq n-1,\;1\leq b\leq k, \nonumber\\
	\vartheta'(f_{(n-1)l+r})&=h_n^lh_r,\; 0\leq l\leq k-1,\ 1\leq r\leq n-1,\nonumber\\
	\vartheta'(f_m)&=h_n^k.
\end{align}
Define an admissible relation $R:=R^{\vartheta'}\circ R_{\varphi'}$,  and compute its pullback-type factorization graph:
 $G_R^0=\{(i,1)\mid 1\leq i\leq k\}\cong\{1,\ldots, k\}$,
\begin{align}
	G_R^1=\, &\{(e_{0,b},f_1)\mid 1\leq b\leq k\}\nonumber\\
	\cup\, &\{(e_{n-1,k}^{l-1}\,e_{n-1,c}\,e_{0,c+1},f_{(n-1)l+1})\mid 1\leq c\leq k-1,\ 1\leq l\leq k-1\}\nonumber\\
	\cup\, &\{(e_{n-1,k}^{l}\,e_{0,1},f_{(n-1)l+1})\mid 1\leq l\leq k-1\}\nonumber\\
	\cup\, &\{(e_{n-1,k}^l\,e_{r-1,c}\,,\,f_{(n-1)l+r})\mid 1\leq c\leq k,\ 0\leq l\leq k-1,\ 2\leq r\leq n-1\}\nonumber\\
	\cup\, &\{(e_{n-1,k}^{k-1}\,e_{n-1,c}\,,\,f_m)\mid 1\leq c\leq k\},\nonumber\\
s_{G_R}(x,y)=\, &s_E(x),\quad t_{G_R}(x,y)=t_E(x).
\end{align}
Note that there is exactly one edge going from $i$ to $i+1$ for $1\leq i\leq k-1$,
there are $m-1$ edges going from $k$ to $i$ for $2\leq i\leq k$, and there are $m$ edges going from $k$ to~$1$, which is depicted
at the top of our diagram of graphs.
The morphisms $\varphi$ and $\vartheta$ given by respective projections 
are admissible by Proposition~6.11 because $F$ is row finite.
Combined with the above given morphisms $\varphi'$ and~$\vartheta'$, they  yield a commutative diagram
of graphs providing both the pullback-type  and pushout-type admissible factorizations.
As explained in~\cite{hl24}, the above commutative diagram of graphs yields the following commutative diagram of their graph C*-algebras:
\begin{center}
\begin{tikzpicture}[scale=2.3,baseline=(current bounding box.center)]

\node (a1) at (135:1) {$M_k(\mathcal{O}_{m})$};
\node (a2) at ($(45:1)+(135:1)$) {$M_k(\mathcal{O}_{k(m-1)+1})$};
\node (b1) at (0,0) {$\phantom{.}\mathcal{O}_n.$};
\node (b2) at (45:1) {$\mathcal{O}_m$};

\path[To-,font=\footnotesize,inner sep=2pt,shorten <=2pt,shorten >=2pt]
(a1) edge node[above=2pt] {$R^*$} (b2)
(a1) edge node[above left] {$\vartheta_*$} (a2)
(b1) edge node[below right] {$\vartheta'_*$} (b2)
(a1) edge node[below left] {$\varphi'^*$} (b1)
(a2) edge node[above right] {$\varphi^*$} (b2);

\end{tikzpicture}
\end{center}
\vspace*{-9mm}
\end{example}

The following example shows that an admissible relation can admit neither admissible pullback-type nor admissible pushot-type factorizations.


\begin{example}\label{ex:last}
Consider the following relation morphism:
\begin{center}
\begin{tikzpicture}[scale=6]

\node[draw,rectangle,color=gray!60] (a1) at (0,0) {\includegraphics[page=18,valign=c]{images.pdf}};
\node[draw,rectangle,color=gray!60] (b2) at (1,0) {\includegraphics[page=17,valign=c]{images.pdf}};

\path[-To,font=\footnotesize,inner sep=1pt,shorten <=2pt,shorten >=2pt]
(a1) edge node[above=2pt] {$R$} (b2);

\draw
(a1.west) node[left] {$E$}
(b2.east) node[right] {$F$};

\end{tikzpicture}.
\end{center}
Here $E$ is given by $E^0:=\{1,2,3,5,6\}$ and $E^1:=\{e_{12},e_{13},e_{56}\}$ with $s_E(e_{ij}):=i$ and $t_E(e_{ij}):=j$ for all $e_{ij}\in E^1$.
The graph $F$ has vertices $1,\,2,\,4,\,5$, one edge $f_{12}$ from $1$ to~$2$, and countably infinitely many edges from $1$ to~$4$.
As there are no paths longer than one here, the  admissible relation $R$ is the union of 
\begin{equation}
G_R^0:=\{(1,1),\,(2,2),\,(5,5),\,(6,5)\} \quad\text{and}\quad
G_R^1:=\{(e_{12},f_{12})\} .
\end{equation}
Arguing as in Example~\ref{ex:nopull}, we can easily see that $R$ has no admissible pullback-type factorization.
Furthermore, neither does $R$ admit an admissible pushout-type factorization. Indeed, suppose that such a factorization exists. Then the graph homomorphism 
from $E$  would identify vertices $5$ and $6$ mapping the edge $e_{56}$ to a loop, which would contradict the target-bijectivity
 condition.
\end{example}

It turns out that we can remedy the above lack of admissible factorizations by allowing more factorization steps. This brings us to our last example.
\begin{example}\label{fdec}
Let $R\in\mathsf{ARG}(E,F)$ be the relation in Example~\ref{ex:last}. Then one has the following admissible factorization:
\begin{center}
\begin{tikzpicture}[scale=4]

\node[draw,rectangle,color=gray!60] (a) at (0,0) {\includegraphics[page=18,valign=c]{images.pdf}};
\node[draw,rectangle,color=gray!60] (b) at (1,0) {\includegraphics[page=19,valign=c]{images.pdf}};
\node[draw,rectangle,color=gray!60] (c) at (2,0) {\includegraphics[page=20,valign=c]{images.pdf}};
\node[draw,rectangle,color=gray!60] (d) at (3,0) {\includegraphics[page=17,valign=c]{images.pdf}};

\path[-To,font=\footnotesize,inner sep=1pt,shorten <=2pt,shorten >=2pt]
(a) edge node[above=2pt] {$R^\vartheta$} (b)
(b) edge node[above=2pt] {$R_\varphi$} (c)
(c) edge node[above=2pt] {$R^{\vartheta'}$} (d);

\draw
(a.south east) node[right] {$E$}
(b.south east) node[right] {$G$}
(c.south east) node[right] {$H$}
(d.south) node[below] {$F$};

\end{tikzpicture}.
\end{center}
Here $\vartheta$ and $\vartheta'$ are the obvious inclusions, and $\varphi$ embeds the top component of the graph and sends both vertices $5$ and $6$ to 
the vertex~$5$. 
One easily checks that
$\vartheta\in\mathsf{RMIPG}(G,E)$, $\vartheta'\in\mathsf{RMIPG}(F,H)$ and $\varphi\in\mathsf{CRTBPOG}(G,H)$.
\end{example}

\section*{Acknowledgments}
\noindent
This research is part of the EU Staff Exchange project 101086394 \emph{Operator Algebras That One Can See}.
The project is co-financed by the Polish Ministry of Education and Science under the program PMW (grant agreement 5305/HE/2023/2).
 G.\ G.~de Castro and F.~D'Andrea hereby acknowledge the financial support of their research visits to Warsaw by the University of Warsaw 
 Thematic Research Programme \emph{Quantum Symmetries}. They are also happy to thank IMPAN for the hospitality. G.\ G.~de Castro was also 
 partially supported by Fundacao de Amparo a Pesquisa e Inovacao do Estado de Santa Catarina (FAPESC), Edital 21/2024 and Conselho Nacional de Desenvolvimento Científico e Tecnológico (CNPq) - Brazil.
P.\ M.~Hajac and F.~D'Andrea are grateful to the Universidade Federal de Santa Catarina in Florian\'opolis for the hospitality.

\end{document}